 \newtheorem{thm}{Theorem}[section]
 \newtheorem{cor}[thm]{Corollary}
 \newtheorem{lem}[thm]{Lemma}
 \theoremstyle{definition}
 \newtheorem{defn}[thm]{Definition}
 \theoremstyle{remark}
 \numberwithin{equation}{section}
\newcommand{\myp}[2][0cm]{\mathopen{}\left(#2\parbox[h][#1]{0cm}{}\right)}
\begin{document}
\title[Solution of the logarithmic coefficients conjecture $\ldots$]
 {Solution of the logarithmic coefficients conjecture in some families of univalent functions}

\author[S. Kanas and V. S. Masih]{ Stanislawa Kanas$^{1}$ and Vali Soltani Masih}
\address{$^{1}$University of Rzeszow, Al. Rejtana 16c, PL-35-959 Rzesz\'{o}w, Poland }
\email{skanas@ur.edu.pl}
\address{Department of Mathematics\\ Payame Noor University\\ Tehran, Iran}
\email{masihvali@gmail.com; v\_soltani@pnu.ac.ir}
\thanks{$^{1}$Corresponding author}
\subjclass{Primary 30C45; Secondary 30C80}
\keywords{logarithmic coefficients, univalent functions, domain bounded by sinusoidal spiral, subordination, starlike and convex functions, coefficient bounds}

\begin{abstract}
For univalent and normalized functions $f$ the logarithmic coefficients $\gamma_n(f)$ are determined by the formula $\log(f(z)/z)=\sum_{n=1}^{\infty}2\gamma_n(f)z^n$. 
In the paper \cite{Pon} the authors posed the conjecture that a locally univalent function in the unit disk, satisfying  the condition
\[
\Re\left\{1+zf''(z)/f'(z)\right\}<1+\lambda/2\quad (z\in \mathbb{D}),
\]
 fulfill also the following inequality:
$$|\gamma_n(f)|\le \lambda/(2n(n+1)).$$
Here $\lambda$ is a real number such that $0<\lambda\le 1$.
In the paper we confirm that the conjecture is true, and sharp. 
\end{abstract}

\maketitle
\section{Introduction}\label{intro}

Let $\mathcal{A}$ denote the class of \emph{holomorphic functions}
in the open unit disc $\mathbb{D}=\left\{z\colon  |z|<1\right\}$ on the complex plane $\mathbb{C}$ of the form
\begin{equation}\label{eq_expand_f}
    f(z)=z+\sum_{n=2}^{\infty}a_nz^n \quad \left(z \in \mathbb{D}\right).
\end{equation}
The subclass of $\mathcal{A}$  consisting of all \emph{univalent}
functions $f$ in $\mathbb{D}$, is denoted by $\mathcal{S}$. By $\mathcal{CV}$ and $\mathcal{ST}$ we denote the subfamilies of $f\in\mathcal{S}$ of
\textit{convex and starlike} functions (resp.).  Functions $f\in \mathcal{CV}$ ($f\in \mathcal{ST}$) are analytically characterized by the condition $\Re\{1+zf''(z)/f'(z)\} > 0$ ($\Re\{zf'(z)/f(z)\} > 0$, resp.) in $\mathbb{D}$.
A natural generalization of the mentioned classes are $\mathcal{CV}(\beta)$  and $\mathcal{ST}(\beta)$, of \textit{convex functions of order} $\beta$ (\textit{starlike functions of order}$\beta$ resp.), where $0\le \beta<1$, consisting of functions $f$ that satisfy  $\Re\{1+zf''(z)/f'(z)\} > \beta$ ($\Re\{zf'(z)/f(z)\} > \beta$, resp.) in $\mathbb{D}$.

For  $f\in \mathcal{S}$ a function
\begin{equation}\label{eq_expand_log}
    \log\dfrac{f(z)}{z}=\sum_{n=1}^\infty 2\gamma_n(f)z^n \quad \myp{z\in
    \mathbb{D}},
\end{equation} is well defined and  $\gamma_n(f)$ are called \textit{logarithmic coefficients} of  $f$ \cite{Dur}. The importance of the logarithmic coefficients follows from the long-known Lebedev–Milin inequalities, where estimates of the logarithmic coefficients were used to find bounds  of the coefficients of $f$. Differentiation of \eqref{eq_expand_log} and equation coefficients of both sides implies
\begin{equation}
{\gamma_{1}(f)=\frac{1}{2} a_{2}}, \quad {\gamma_{2}(f)=\frac{1}{2}\left(a_{3}-\frac{1}{2} a_{2}^{2}\right)}.\end{equation}
The classical coefficients bound and  use of the Fekete-Szeg\"{o} inequality in $\mathcal{S}$ yields the sharp estimates
\[
|\gamma_1(f)|\le 1,\qquad \left|\gamma_{2}(f)\right| \leq \frac{1}{2}\left(1+2 e^{-2}\right).
\]
Despite this the problem of the best upper bounds for $\left|\gamma_n(f)\right|\ (n=3,4,...)$ in $ \mathcal{S}$ is still open. To be closer to the accurate solution  the estimates of $\left|\gamma_n(f)\right|$ are found in many different subclasses of $\mathcal{S}$. For example in the class of univalent and starlike functions  the sharp bound  $|\gamma_n(f)|\le 1/n$ holds true, however it is not true in  whole class $\mathcal{S}$. Although, the Koebe function $k(z)=z\left(1-\mathrm{e}^{\mathrm{i}\theta}z\right)^{-2}$ for each real $\theta$, has logarithmic coefficients $\gamma _n(k)={e^{\mathrm{i}n\theta}}/{n}\ (n\geq 1)$ then there exists a bounded, univalent function $f$  with $\gamma_n(f)={\rm O}\myp{n^{-0.83}}$ (see, for example \cite[Theorem 8.4]{Dur}).
Logarithmic coefficients problem was also considered in the classes $\mathcal{G}(\lambda)$ and $\mathcal{N}(\lambda)$, defined below, and a partial results were obtained \cite{Pon}.
We remind that a locally univalent function $f$ is a member of $\mathcal{G}(\lambda)\ (0<\lambda\le 1)$, if it satisfies the condition
\begin{equation}\label{Glambda}
\Re\left(1+zf''(z)/f'(z)\right)<1+\lambda/2\quad \myp{z\in \mathbb{D}},
 \end{equation}
 and $f\in \mathcal{A}$ is an element of  $\mathcal{N}(\lambda)\ (0<\lambda\leq1)$, if 
\begin{equation}\label{Nlambda}
 \Re\left(zf'(z)/f(z)\right)<1+\lambda/2\quad \left( z\in \mathbb{D}\right).
 \end{equation}
Between $\mathcal{G}(\lambda)$ and $\mathcal{N}(\lambda)$ it holds the standard Alexander relation: $f\in \mathcal{G}(\lambda)$ if and only if $zf'\in \mathcal{N}(\lambda)$.

The class  $\mathcal{G}(\lambda)$ was studied by several mathematicians; for example Masih et al.  proved that functions in $\mathcal{G}(\lambda)$ are close-to-convex and univalent in $\mathbb{D}$ \cite{MES} (see also \cite{Ozaki}).  Umezawa discussed a general version of the conditions \eqref{Glambda} and \eqref{Nlambda} \cite{umezawa}. The functions from the family $\mathcal{G}(\lambda)$ are starlike, see e.g. \cite{PR, SS} and also \cite{JO}. Recently, Ponnusamy et al. \cite{PSW} proved the sharp estimates for the initial logarithmic coefficients of $f\in \mathcal{G}(\lambda)$
\[
\left|\gamma_{1}(f)\right| \leq \frac{\lambda}{4}, \quad\left|\gamma_{2}(f)\right| \leq \frac{\lambda}{12}, \quad \left|\gamma_{3}(f)\right| \leq \frac{\lambda}{24},
\]
with the equality attained for the function $f$ such that $f'(z)=\myp{1-z^n}^{\lambda/n}\ (n=1,2,3)$. Also, they posed the conjecture that for $f\in \mathcal{G}(\lambda)\ (0<\lambda\le 1)$, the following sharp inequalities
\[
\left|\gamma_n(f)\right|\le \frac{\lambda}{2n(n+1)}\quad (n=1,2,...)
\]
holds. In the present paper we aim to study the  logarithmic coefficients $|\gamma_n(f)|$ for $f\in\mathcal{G}(\lambda)$ and prove that the above conjecture is true and sharp.  To do this, we  establish  a  correspondence between family $\mathcal{G}(\lambda)$  and a family related to sinusoidal spiral, denoted  $\mathcal{ST}_{ss}(\lambda)$. Additionally, we prove the growth, distortion and rotation theorem for functions $f$ in the class $\mathcal{G}(\lambda)$ and $\mathcal{N}(\lambda)$ and present some coefficient estimates in $\mathcal{ST}_{ss}(\lambda)$.

We also note that in the various families of analytic functions the absolute square series of logarithmic coefficients satisfy the sharp inequalities. For example, the logarithmic coefficients $\gamma_n(f)$ of every function $f\in \mathcal{ST}(\beta)$ satisfy the sharp inequality
\[
\sum_{n=1}^{\infty}\left|\gamma_{n}(f)\right|^{2} \leq (1-\beta)^{2} \frac{\pi^{2}}{6}\quad \myp{0\le \beta<1},
\]
with the equality for the Koebe function of order $\beta$ \cite{OPW1}. Also, in the class $\mathcal{G}(\lambda)$ the logarithmic coefficients  satisfy the inequalities \cite{PSW}
\begin{eqnarray*}
\sum_{n=1}^{\infty} n^{2}\left|\gamma_{n}(f)\right|^{2} &\le& \frac{\lambda}{4(\lambda+2)}, \\
 \sum_{n=1}^{\infty}\left|\gamma_{n}(f)\right|^{2} &\le& \frac{\lambda^{2}}{4} {\rm Li}_{2}\left((1+\lambda)^{-2}\right),
\end{eqnarray*}
where $\mathrm{Li}_2$ (dilogarithm function) is defined as follows
\[
\mathrm{Li}_2(z)=\sum_{n=1}^{\infty}\frac{z^n}{n^2}=-\int_{0}^{z}\frac{\ln (1-t)}{t}\,\mathrm{d}t \qquad \myp{z\in \mathbb{D}}.
\]

Very recently, a new subfamily related to the domain bounded by the sinusoidal spiral
\begin{eqnarray*}
        \mathbb{SS}(\lambda)&=&\left\{\rho\mathrm{e}^{\mathrm{i} \varphi}\colon\quad \rho
        =\myp{2\cos\frac{\varphi}{\lambda}}^{\lambda},\quad-\frac{\lambda \pi}{2}<\varphi\le\frac{\lambda \pi}{2}\right\}\\
        &=&\left\{w\in \mathbb{C}\colon\quad \Re\left\{w\right\}>0,\quad \Re\left\{w^{-1/\lambda}\right\}=\frac12\right\}\cup\{0\},
\end{eqnarray*}
 with $0<\lambda\le 1$, was defined \cite{MES}. The sinusoidal spiral   $\mathbb{SS}(\lambda)$ intersects the real axis at the origin and $(u,0)=(2^{\lambda},0)$ and its maximal slope angle to the real axis equals $(\pi \lambda)/2$, see Fig.\ref{Fig}.
  \begin{figure}[!h]
\centering
{%
\includegraphics[width=0.5\textwidth]{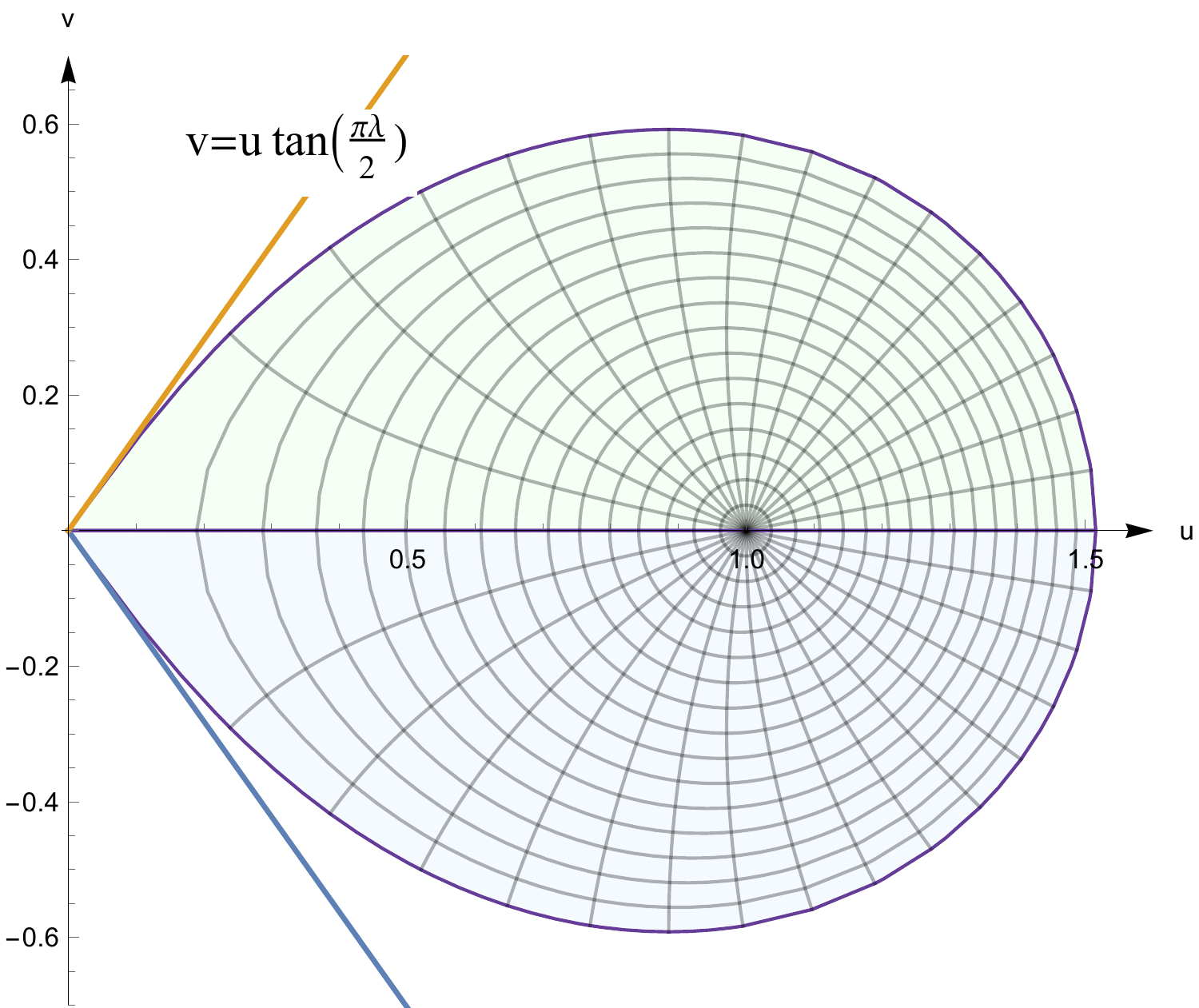}}%
\caption{The image of $\mathfrak{q}_{\lambda}(\mathbb{D})\ (\lambda=0.6)$.}\label{Fig}
\end{figure}
\newline
Now, let us denote
$$\mathfrak{q}_{\lambda}(z):=\myp{1+z}^\lambda= \mathrm{e}^{\lambda\ln(1+z)}\quad\myp{0<\lambda\le1, \,z\in \mathbb{D}},$$
where the branch of the logarithm is determined by $\mathfrak{q}_{\lambda}(0)=1$.
The function $\mathfrak{q}_{\lambda}(z)$ is  convex univalent in $\mathbb{D}$ for each $0<\lambda\le1$ and  maps the unit circle onto a spiral $ \mathbb{SS}(\lambda)$. The power series of $\mathfrak{q}_{\lambda}$ is of the form
\begin{eqnarray}\label{eq_expand_q}
   \mathfrak{q}_{\lambda}(z)&=&1+\sum_{k=1}^{\infty}\dfrac{\lambda\myp{\lambda-1}\cdots\myp{\lambda-k+1}}{k!}
    z^k=1+\sum_{k=1}^{\infty}B_k z^k\quad \myp{z\in \mathbb{D}}.
\end{eqnarray}

\section{Fundamental properties of the family related to $\mathbb{SS}(\lambda)$}\label{fund_prop}
In this section we present some basis results for functions related to the domain $\mathbb{SS}(\lambda)$. Next, we find a relation between $\mathcal{G}(\lambda)$ and  $\mathcal{ST}_{ss}(\lambda)$, that leads to determine the sharp bounds for logarithmic coefficients in $\mathcal{G}(\lambda)$.

\begin{defn}[\cite{MES}]\label{class_ST_ss_lambda}
By $\mathcal{ST}_{ss}(\lambda)$ we denote the subfamily of $\mathcal{ST}$ consisting of the functions $f$ and satisfying the condition
\begin{equation}\label{eq_class_st}
\frac{zf'(z)}{f(z)}\prec \mathfrak{q}_{\lambda}(z) \quad \myp{z\in \mathbb{D}},
\end{equation}
where  $\prec$ denotes the subordination.
\end{defn}
The condition \eqref{eq_class_st} also means that the quantity $zf'(z)/f(z)$ lies in a domain bounded by the spiral $\mathbb{SS}(\lambda)$.
The geometric properties of $\mathfrak{q}_{\lambda}$ imply recurrence inclusions, below.
\begin{equation}\label{inclusion}
\mathcal{ST}_{ss}\left(\frac{\lambda}{n+1}\right)\subset \mathcal{ST}_{ss}\left(\frac{\lambda}{n}\right)\subset\mathcal{ST}_{ss}(\lambda)\quad \left(0<\lambda\le 1,\ n\ge 1\right).\end{equation}
Let $F_{\lambda,n}\in \mathcal{ST}_{ss}(\lambda)$ be given by
\[
\frac{zF'_{\lambda,n}(z)}{F_{\lambda,n}(z)} = \mathfrak{q}_{\lambda}(z^n)=\myp{1+z^n}^\lambda \quad \myp{z\in \mathbb{D}, \, n=1, 2,  \ldots}.
\]
Then, the function $F_{\lambda,n}(z)$ is of the form
 \begin{equation}\label{F_n}
   F_{\lambda,{n}}(z)= z\exp\myp{ \int_{0}^{z} \dfrac{\mathfrak{q}_{\lambda}(t^{n})-1}{t}\,
   \mathrm{d}t} \quad \myp{z\in \mathbb{D}},
\end{equation}
and is extremal for various problems in $\mathcal{ST}_{ss}(\lambda)$. Especially for $n=1$ we have
\begin{equation}\label{F_1}
  F_{\lambda}(z):= F_{\lambda,1}(z)=z\exp\myp{ \int_{0}^{z}
  \dfrac{\mathfrak{q}_{\lambda}(t)-1}{t}\,
   \mathrm{d}t}   =z+\lambda
   z^{2}+\dfrac{3\lambda^2-\lambda}{4}z^3
   +\cdots,
\end{equation}
and setting $\lambda/m$ instead of $\lambda$ with $m,n=1,2,\ldots$,  we obtain the following form
\begin{equation}\label{Fm_n}
F_{\lambda/m,n}(z)= z\exp\myp{ \int_{0}^{z} \dfrac{\mathfrak{q}_{\lambda}(t^n)^{\lambda/m}-1}{t}\,dt} =z+\dfrac{\lambda}{nm}
   z^{n+1}+\dfrac{\lambda^2\myp{n+1}-nm\lambda}{4n^2m^2}z^{2n+1}+\cdots .
\end{equation}
A special case ($n=1$) of $F_{\lambda/m,n}$ gives
 \begin{equation}\label{F_m_1}
F_{\lambda/m}(z)=F_{\lambda/m,1}(z)= z\exp\myp{ \int_{0}^{z} \dfrac{\mathfrak{q}_{\lambda}(t)^{\lambda/m}-1}{t}\,
   \mathrm{d}t}  =z+\dfrac{\lambda}{m}
   z^{2}+\dfrac{2\lambda^2-m\lambda}{4m^2}z^{3}+\cdots \,\, \myp{z\in \mathbb{D}}.
\end{equation}

For $f\in \mathcal{S}$, $zf'(z)/f(z)$ is a non-vanishing analytic function in $\mathbb{D}$. Then, the transform $\mathbf{G}_f$ of $f\in \mathcal{S}$ is well defined, and is of the form \cite{NR}
\begin{equation}\label{eq_G_f}\mathbf{G}_f(z):=\int_{0}^{z}\dfrac{tf'(t)}{f(t)}\,\mathrm{d}t=\int_{0}^{z}\left(1+t\left(\log\dfrac{f(t)}{t}\right)'
 \right)\mathrm{d}t=z+2\sum_{n=2}^{\infty}\dfrac{n-1}{n}\gamma_{n-1}(f)z^{n}.
\end{equation}
Also, let $\mathbf{N}_f$  be given by
\begin{equation}\label{eq_N_f}
\mathbf{N}_f(z):=z\mathbf{G}'_f(z)=z+2\sum_{n=2}^{\infty}(n-1)\gamma_{n-1}(f)z^{n}.
\end{equation}
We observe that $\mathbf{G}_f$ and  $\mathbf{N}_f$ of $f\in \mathcal{S}$ are the functions with the power series expressed in terms of logarithmic coefficients that has the consequences  in the next part of our study.

For functions $F_{\lambda/m,n}$ and $F_{\lambda}$ given by \eqref{Fm_n} and \eqref{F_1} the transform $\mathbf{G}$ and  $\mathbf{N}$ yield
\begin{align}
  \mathbf{G}_{F_{\lambda/m,n}}(z)&=\int_{0}^{z}\mathfrak{q}_{\lambda/m}\left(t^n\right) \mathrm{d}t=z+\dfrac{\lambda}{m(n+1)}z^{n+1}+\dfrac{\lambda(\lambda-m)}{2m^2(2n+1)}z^{2n+1}+\cdots.\label{EQ_G}\\
 \mathbf{G}_{F_{\lambda}}(z)&= \mathbf{G}_{F_{\lambda/1,1}}(z)=\dfrac{(1+z)^{1+\lambda}-1}{1+\lambda}=z+\dfrac{\lambda}{2}z^{2}+\frac{\lambda(\lambda-1)}{6}z^3+\cdots.\label{EQ_G_1}\\
  \mathbf{N}_{F_{\lambda/m,n}}(z)&=z\mathfrak{q}_{\lambda/m}\left(z^n\right)=z+\dfrac{\lambda}{m} z^{n+1}+\frac{\lambda(\lambda-m)}{2m^2}z^{2n+1}+\cdots,\label{EQ_N}
 \end{align}
 and $\mathbf{N}_{F_{\lambda}}(z)=\mathbf{N}_{F_{\lambda/1,1}}(z)$.
The sample figures of $\mathbf{N}_{F_{\lambda}}$ and  $ \mathbf{G}_{F_{\lambda}}$  are presented on  Fig. \ref{Fig1} and Fig \ref{Fig2}.\\

Let $m,n=1,2,\ldots$ and $0<\lambda\le 1$. Then, the logarithmic coefficients of $\mathbf{G}_{F_{\lambda/n,n}}$ and $F_{\lambda,n}$ are the following
\begin{equation*}
\gamma_n\myp{\mathbf{G}_{F_{\lambda/n,n}}}=\frac{\lambda}{2n(n+1)},\quad\quad
\gamma_n\myp{F_{\lambda,n}}=\frac{\lambda}{2n},\quad\quad \gamma_n\myp{F_{\lambda}}=\frac{B_n}{2n}.
\end{equation*}
Also, for $m\ge n$, we have
\[\mathbf{G}_{F_{\lambda/m,n}}\in \mathcal{G}(\lambda),\qquad	\mathbf{N}_{F_{\lambda/m,n}}\in \mathcal{N}(\lambda).\]

  \begin{figure}[!h]
\centering
\subfloat[   $\mathbf{N}_{F_{\lambda}}(z)$]{%
\includegraphics[width=0.3\textwidth]{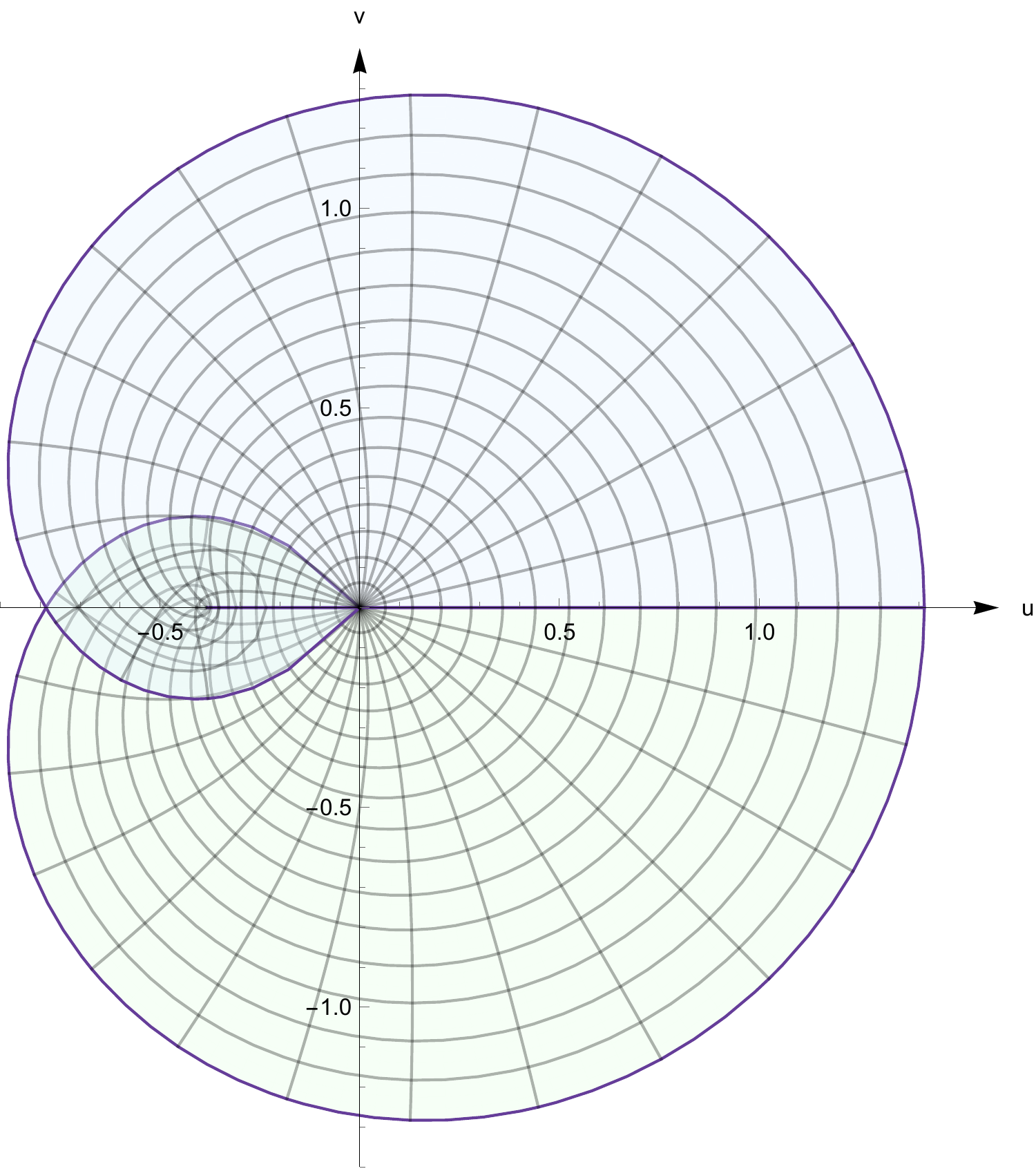}}%
\qquad\qquad
\subfloat[$\mathbf{G}_{F_{\lambda}}(z)$ ]{%
\includegraphics[width=0.3 \textwidth]{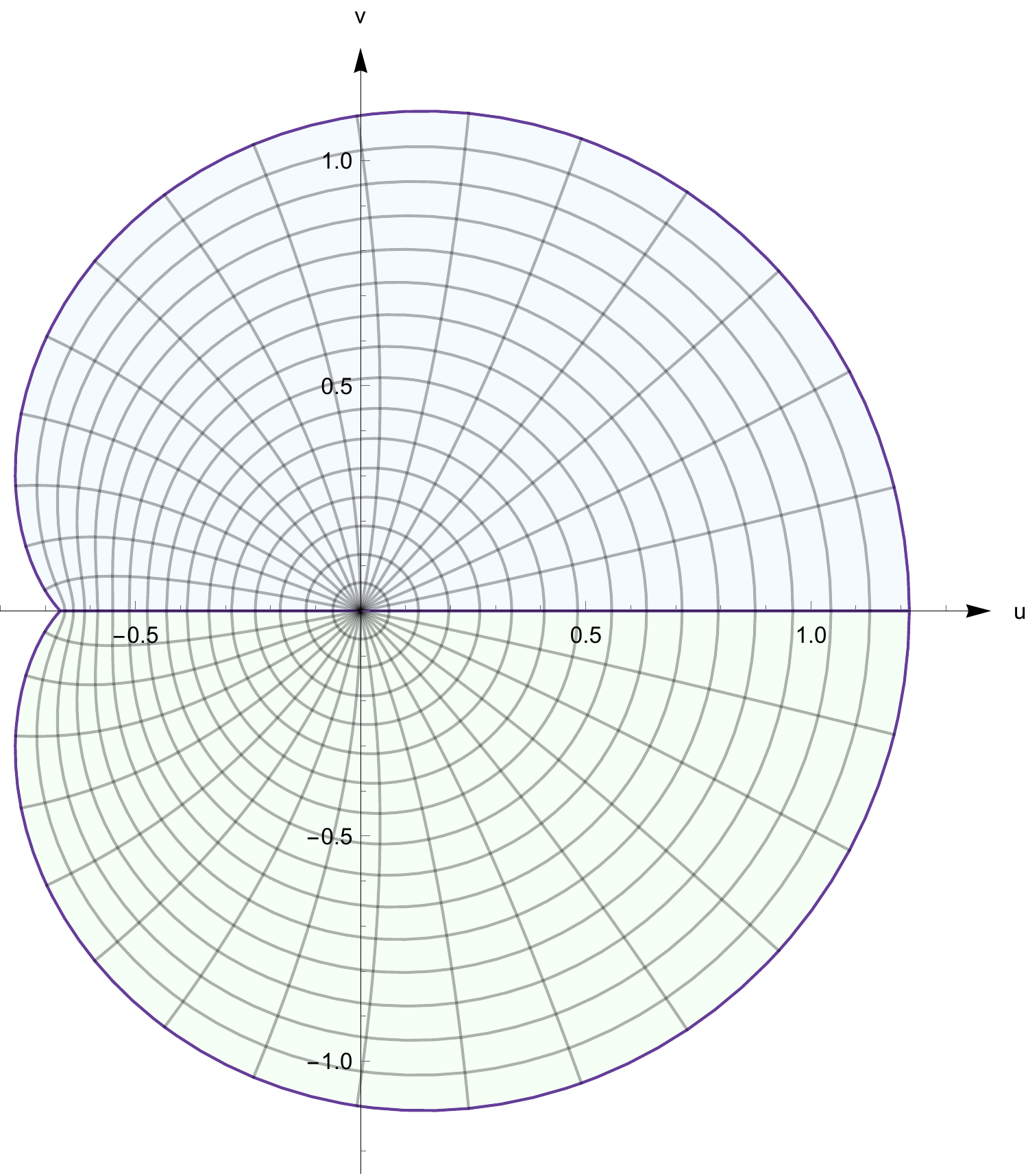}}%
\caption{The image of $ \mathbf{N}_{F_{\lambda}}(z), \ \mathbf{G}_{F_{\lambda}}(z), \ (\lambda=1/2)$.}\label{Fig1}
\end{figure}
  \begin{figure}[!h]
\centering
\subfloat[$\mathbf{N}_{F_{\lambda,n}}(z)$]{%
\includegraphics[width=0.35\textwidth]{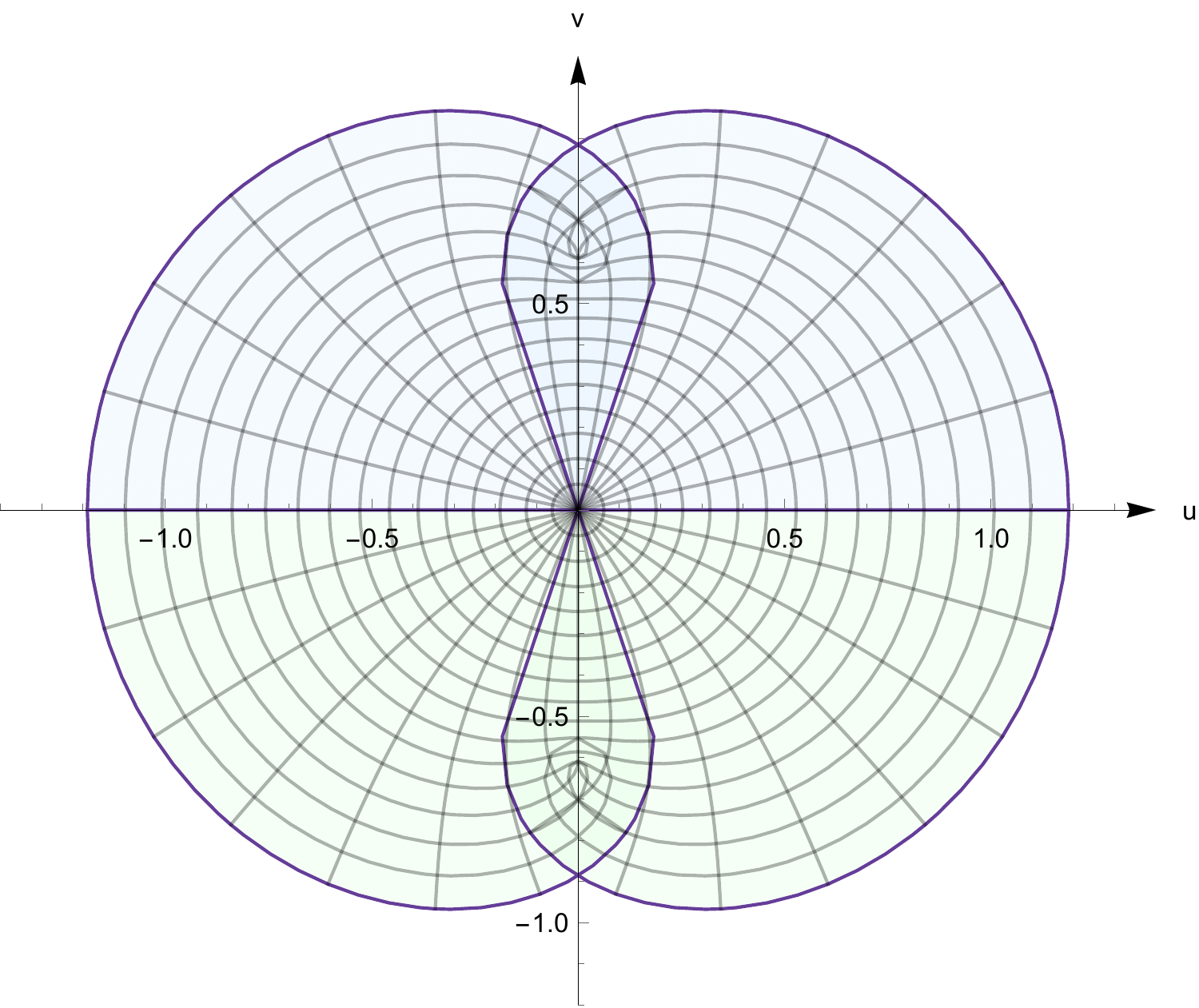}}%
\qquad
\subfloat[$\mathbf{G}'_{F_{\lambda,n}}(z)=\mathfrak{q}_\lambda\left(z^n\right)$ ]{%
\includegraphics[width=0.4 \textwidth]{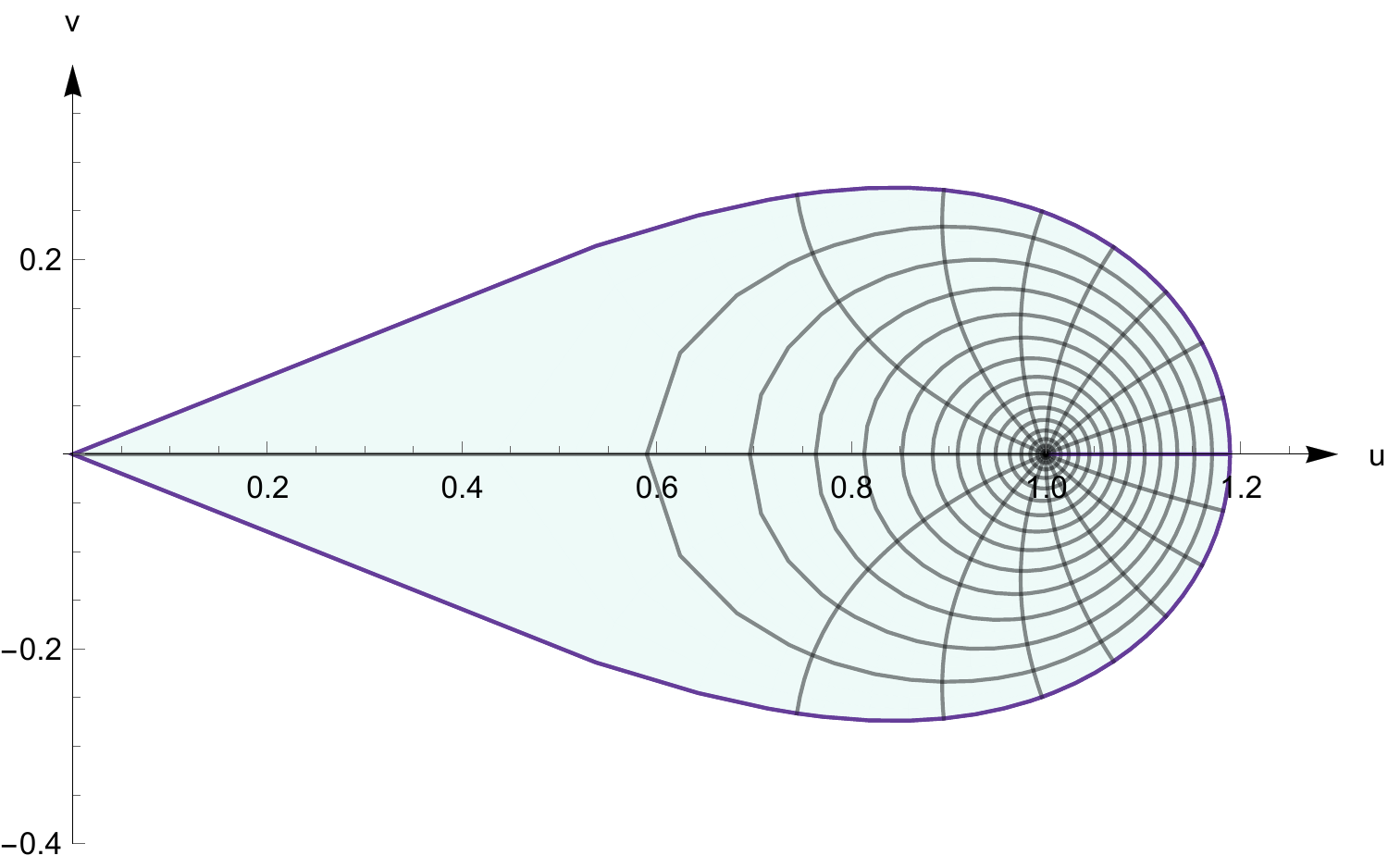}}%
\caption{The image of $ \mathbf{N}_{F_{\lambda,n}}(z), \ \mathbf{G}'_{F_{\lambda,n}}(z), \ (\lambda=1/2,\ n=2)$.}\label{Fig2}
\end{figure}

\section{Relations between $\mathcal{G}(\lambda)$, $\mathcal{N}(\lambda)$ and $\mathcal{ST}_{ss}(\lambda)$}\label{sec_P_R}
In this section, we study the relationship between  the classes $\mathcal{G}(\lambda)$, $\mathcal{N}(\lambda)$ and $\mathcal{ST}_{ss}(\lambda)$, and determine the theorems of growth, distortion, and rotation in the families $\mathcal{G}(\lambda)$ and  $\mathcal{N}(\lambda)$. By discovering such a new relation we are able to confirm the truth of the inequality conjectured in \cite{Pon}.

In order to prove our main results we will use some useful  following lemmas concerning subordination.\\
Let us denote by $\mathcal{Q}$  the class of functions $f$ that are
analytic and injective on $\overline{\mathbb{D}}\setminus \mathbf{E}(f)$, where
$\mathbf{E}(f) =\left\{\zeta \colon \zeta \in \partial\mathbb{D}, \
\lim_{z\to \zeta}f(z)=\infty\right\},
$
and are such that
\begin{equation*}
f'(\zeta)\neq 0\quad \textit{for}\quad \zeta\in \partial\mathbb{D}\setminus \mathbf{E}(f).
\end{equation*}
\begin{lem}\cite[Theorem 2.2d, p.24]{MM}\label{lem_1}
Let $q\in \mathcal{Q}$ with $q(0)=1$ and let $p(z)=1+p_nz^n+\cdots$ be analytic in $\mathbb{D}$ with $p(z)\neq 1$.
If $p\not\prec q$ in $\mathbb{D}$, then there exits points $z_0\in \mathbb{D}$ and $\zeta\in
\partial\mathbb{D}\setminus \mathbf{E}(q)$
and there exits a real number $m\ge n\ge  1$ for which
 \begin{equation*}
p\myp{|z|<|z_0|}\subset q(\mathbb{D}),\quad p(z_0)=q(\zeta), \quad z_0p'(z_0) =m
\zeta q'(\zeta).
\end{equation*}
\end{lem}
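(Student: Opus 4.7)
The plan is to reduce the assertion to a boundary form of Jack's lemma for Schwarz functions, leveraging the injectivity of $q$ up to the accessible boundary to construct an auxiliary self-map of $\mathbb{D}$. Since $q\in\mathcal{Q}$ is univalent on $\overline{\mathbb{D}}\setminus \mathbf{E}(q)$ with nonvanishing derivative on $\partial\mathbb{D}\setminus\mathbf{E}(q)$, its inverse $q^{-1}$ is a well-defined analytic map on $q(\mathbb{D})$ that extends analytically to each accessible boundary value of $q$.

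First I would set $R_0 := \sup\{R \in (0,1] : p(\{|z|<R\}) \subseteq q(\mathbb{D})\}$. Because $p(0)=1=q(0)\in q(\mathbb{D})$ and $q(\mathbb{D})$ is open, $R_0>0$; if $R_0=1$ held, then $w:=q^{-1}\circ p$ would be a well-defined analytic self-map of $\mathbb{D}$ with $w(0)=0$, giving $p\prec q$ and contradicting the hypothesis. Thus $R_0<1$. A continuity argument then produces a point $z_0$ with $|z_0|=R_0<1$ for which $p(z_0)\in\partial q(\mathbb{D})$, and the injectivity of $q$ on $\overline{\mathbb{D}}\setminus\mathbf{E}(q)$ singles out a unique $\zeta\in\partial\mathbb{D}\setminus\mathbf{E}(q)$ with $q(\zeta)=p(z_0)$.

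Next I would examine the Schwarz function $w(z):=q^{-1}(p(z))$, which is analytic on $\{|z|<R_0\}$, with $w(0)=0$ and $|w|<1$. Since $q'(\zeta)\neq 0$, the inverse $q^{-1}$ is analytic at $p(z_0)$, so $w$ extends analytically across $z_0$ with $w(z_0)=\zeta$, $|w(z_0)|=1$. From $p(z)-1=p_n z^n+\cdots$ and $q^{-1}(1+u)=u/q'(0)+O(u^2)$ one sees that the Taylor expansion of $w$ at the origin begins at order $n$. Rescaling $\tilde w(z):=w(R_0 z)$ and invoking the boundary form of Jack's lemma (a Schwarz function with a zero of order $n$ at $0$ attaining modulus one at a regular boundary point $\zeta_0$ satisfies $\zeta_0 \tilde w'(\zeta_0)=m\tilde w(\zeta_0)$ with real $m\ge n$), then differentiating $p=q\circ w$ at $z_0$, gives $z_0 p'(z_0)=q'(\zeta)\,z_0 w'(z_0)=m\zeta q'(\zeta)$, as required.

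The main obstacle is the refined bound $m\ge n$ in the boundary Jack's lemma: one must justify that the auxiliary Schwarz function actually extends analytically across $z_0$ (which is exactly what the condition $q'(\zeta)\neq 0$ built into the definition of $\mathcal{Q}$ buys us), and then run the standard maximum-principle argument on $w(z)/z^n$, or equivalently invoke the Julia--Carath\'eodory theorem, to upgrade the generic inequality $m\ge 1$ to $m\ge n$. Once these ingredients are in place, the algebraic identity for $z_0 p'(z_0)$ is routine.
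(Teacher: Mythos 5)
This lemma is not proved in the paper at all --- it is quoted, with citation, as Theorem 2.2d of Miller--Mocanu's book, and your argument is exactly the standard proof given there: take $R_0$ to be the first radius at which $p\bigl(\{|z|<R_0\}\bigr)$ meets $\partial q(\mathbb{D})$, pull $p$ back by the inverse of $q$ (analytic across $q(\zeta)$ precisely because $q'(\zeta)\neq 0$ for $\zeta\in\partial\mathbb{D}\setminus\mathbf{E}(q)$) to obtain a Schwarz function with a zero of order $n$ at the origin, and apply the boundary (Jack--Miller--Mocanu) lemma together with the chain rule $z_0p'(z_0)=q'(\zeta)\,z_0w'(z_0)$. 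The outline is sound; the two points you gloss over --- that a \emph{finite} point of $\partial q(\mathbb{D})$ is actually attained as $q(\zeta)$ for some $\zeta\in\partial\mathbb{D}\setminus\mathbf{E}(q)$ (this uses that $q\to\infty$ at $\mathbf{E}(q)$, not injectivity, which only gives uniqueness), and that the local inverse of $q$ near $\zeta$ agrees with $q^{-1}$ on the overlap with $q(\mathbb{D})$ --- are routine continuity arguments and do not affect correctness.
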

\begin{lem}\cite[Theorem 3.1b, p.71]{MM}\label{lem_2}
Let $h(z)$ be convex in $\mathbb{D}$ with $h(0)=a$. If $p(z)$ is analytic in $\mathbb{D}$, with $p(0)=a$ and $p(z)+zp'(z)\prec h(z)$, then
\[
p(z)\prec \dfrac{1}{z}\int_{0}^{z} h(t)\, \mathrm{d}t.
\]
\end{lem}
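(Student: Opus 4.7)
The plan is to identify the integral mean on the right-hand side as the best dominant of the differential subordination $p+zp'\prec h$ and to prove the subordination by contradiction via Lemma \ref{lem_1}. First I would set
\[
q(z):=\frac{1}{z}\int_{0}^{z}h(t)\,\mathrm{d}t,
\]
and record three facts: (i) $q(0)=a$, obtained by expanding $h$ in its Taylor series and integrating termwise; (ii) the pivotal identity $q(z)+zq'(z)=h(z)$, obtained by differentiating $zq(z)=\int_0^z h(t)\,\mathrm{d}t$; and (iii) $q$ is analytic in $\mathbb{D}$.

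Next I would establish that $q$ is convex (hence univalent) in $\mathbb{D}$. This is the classical fact that the radial-integral-mean operator $h\mapsto (1/z)\int_0^z h(t)\,\mathrm{d}t$ preserves convexity; one clean route is to recognise the operator as the Hadamard convolution with $-z^{-1}\log(1-z)$, whose convolution partner has positive real part, so the Ruscheweyh--Sheil-Small convolution theorems apply. By passing to the dilation $q_r(z)=q(rz)$, $r<1$, we may place $q$ in the class $\mathcal{Q}$ required by Lemma \ref{lem_1}, letting $r\to 1^-$ at the end of the argument.

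Suppose, toward contradiction, that $p\not\prec q$. Since $p(0)=q(0)=a$, a version of Lemma \ref{lem_1} (adapted to the common initial value $a$ in place of $1$, the adjustment being a harmless translation) yields points $z_0\in\mathbb{D}$, $\zeta\in\partial\mathbb{D}\setminus\mathbf{E}(q)$ and a real number $m\ge 1$ with $p(z_0)=q(\zeta)$ and $z_0p'(z_0)=m\zeta q'(\zeta)$. Adding these and invoking $q+zq'=h$ gives
\[
p(z_0)+z_0p'(z_0)=q(\zeta)+m\zeta q'(\zeta)=h(\zeta)+(m-1)\zeta q'(\zeta).
\]
Because $q$ is convex, $\zeta q'(\zeta)$ is an outward-pointing direction at the boundary point $q(\zeta)$ of the convex set $q(\mathbb{D})$, and moving further along this normal direction lands outside $h(\mathbb{D})$; more precisely, the convexity of $q(\mathbb{D})$ together with $h=q+zq'$ forces $q(\zeta)+m\zeta q'(\zeta)\notin h(\mathbb{D})$ for every $m\ge 1$. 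This contradicts $p+zp'\prec h$, so $p\prec q$ as claimed.

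The main obstacle is the geometric step just used: verifying, from the convexity of $q$ alone, that $q(\zeta)+m\zeta q'(\zeta)$ lies outside $h(\mathbb{D})=(q+zq')(\mathbb{D})$ whenever $m\ge 1$ and $\zeta\in\partial\mathbb{D}$ is a regular boundary point. Once this normal-direction exclusion is secured, the remainder is bookkeeping around the identity $q+zq'=h$, the convexity-preserving property of the radial integral mean, and the dilation limit $r\to 1^-$.
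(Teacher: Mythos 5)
The paper does not prove this lemma at all: it is quoted from Miller and Mocanu \cite[Theorem 3.1b, p.71]{MM}, so your proposal can only be compared with the standard source argument — and in outline you reproduce it faithfully (set $q(z)=\frac{1}{z}\int_0^z h(t)\,\mathrm{d}t$, note $q(0)=a$ and $q(z)+zq'(z)=h(z)$, prove $q$ convex, run Lemma \ref{lem_1} to a contradiction). The approach is the right one, but there is a genuine gap, and it sits exactly where you flag it: the exclusion $q(\zeta)+m\zeta q'(\zeta)\notin h(\mathbb{D})$ for $m\ge 1$ is not bookkeeping to be ``secured'' later; it is the entire content of the lemma, and the justification you sketch does not deliver it. Knowing that $\zeta q'(\zeta)$ is the outward normal direction of the convex set $q(\mathbb{D})$ at $q(\zeta)$ only shows the point leaves $q(\mathbb{D})$; since $q(\mathbb{D})\subset h(\mathbb{D})$, leaving the smaller set says nothing about leaving the larger one. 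The correct argument uses a supporting line of $h(\mathbb{D})$, not of $q(\mathbb{D})$: the point $h(\zeta)=q(\zeta)+\zeta q'(\zeta)$ lies on $\partial h(\mathbb{D})$, and convexity of $h$ places $h(\mathbb{D})$ inside the half-plane $\Re\left[(w-h(\zeta))\,\overline{\zeta h'(\zeta)}\right]\le 0$. Differentiating $h=q+zq'$ gives $\zeta h'(\zeta)=\zeta q'(\zeta)\myp{2+\zeta q''(\zeta)/q'(\zeta)}$, and convexity of $q$ yields $\Re\left\{1+\zeta q''(\zeta)/q'(\zeta)\right\}\ge 0$, hence $\Re\left[\zeta q'(\zeta)\,\overline{\zeta h'(\zeta)}\right]=\left|\zeta q'(\zeta)\right|^{2}\,\Re\left\{2+\zeta q''(\zeta)/q'(\zeta)\right\}>0$. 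Consequently $q(\zeta)+m\zeta q'(\zeta)=h(\zeta)+(m-1)\zeta q'(\zeta)$ violates the supporting inequality strictly when $m>1$, and when $m=1$ it is the boundary point $h(\zeta)$ itself, which does not belong to the open set $h(\mathbb{D})$; either way $p(z_0)+z_0p'(z_0)\notin h(\mathbb{D})$, contradicting $p+zp'\prec h$.

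Two auxiliary steps also need repair. First, convexity of $q$: the fact $\Re\left\{-z^{-1}\log(1-z)\right\}>1/2$ gives, via the Herglotz representation, only that $q$ is an average of values $h(xz)$ with $|x|=1$, hence $q\prec h$; it does not make $q$ convex, and the Ruscheweyh--Sheil-Small convolution theorem requires both factors to be normalized convex functions, which $-z^{-1}\log(1-z)$ is not. The clean route is already in the paper: apply Lemma \ref{Libera} (Libera's theorem) to the normalized convex function $\left(h(z)-a\right)/h'(0)$, noting that the averaging operator commutes with this affine renormalization. Second, your dilation is applied to the wrong function: replacing $q$ by $q_r$ while keeping $h$ fixed breaks the half-plane argument, because the point $q_r(\zeta)+m\zeta q_r'(\zeta)$ is then only excluded from $h(r\mathbb{D})$, not from $h(\mathbb{D})$, so no contradiction with $p+zp'\prec h$ results. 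One must dilate coherently: for $\rho<r<1$, Schwarz's lemma gives $p_\rho+zp_\rho'\prec h_r$, where $h_r(z)=h(rz)$ is convex and analytic on $\overline{\mathbb{D}}$ and $q_r(z)=\frac{1}{z}\int_0^z h_r(t)\,\mathrm{d}t=q(rz)$ lies in $\mathcal{Q}$; the argument above then yields $p_\rho\prec q_r$, and letting $r\to1^-$ and $\rho\to1^-$ gives $p\prec q$.
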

\begin{lem}\cite[Corollary 3.1d.1, p.76]{MM}\label{lem_3}
Let $h(z)$ be starlike in $\mathbb{D}$ with $h(0)=0$ and $a\ne0$. If $p(z)=1+p_nz^n+p_{n+1}z^{n+1}+\cdots$ is analytic in $\mathbb{D}$ satisfies the subordination relation
\[
\frac{zp'(z)}{p(z)} \prec h(z)\quad \Longrightarrow p(z)\prec a\exp\left(\frac{1}{n}\int_{0}^{z} \frac{h(t)}{t}\, \mathrm{d}t\right).
\]
\end{lem}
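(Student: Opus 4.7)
The plan is to prove $p\prec q$ by contradiction via the admissibility lemma (Lemma \ref{lem_1}), with the candidate dominant
\[
q(z):=a\exp\left(\frac{1}{n}\int_0^z \frac{h(t)}{t}\,\mathrm{d}t\right).
\]
Since $h$ is analytic with $h(0)=0$, the ratio $h(t)/t$ extends analytically to the origin, so $q$ is analytic in $\mathbb{D}$ with $q(0)=a$, and logarithmic differentiation gives the key identity $zq'(z)/q(z)=h(z)/n$. Thus $q$ is designed precisely so that the hypothesis $zp'/p\prec h$ is the ``differentiated'' form of the conclusion $p\prec q$.

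The first substantive step is to verify that $q\in\mathcal{Q}$. Univalence of $q$ in $\mathbb{D}$ is a classical consequence of the starlikeness of $h$: the equation $zq'/q=h/n$ exhibits $q$ as a spiral-like function generated by the starlike $h$, and this construction is standard (it is one of the Briot--Bouquet-type lemmas in Miller--Mocanu). The boundary regularity required by $\mathcal{Q}$ (analyticity and non-vanishing derivative off the singular set $\mathbf{E}(q)$) is inherited from the analogous regularity of $h$ on $\partial\mathbb{D}\setminus\mathbf{E}(h)$.

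With $q\in\mathcal{Q}$ established, suppose for contradiction that $p\not\prec q$. Since $p(z)-a$ begins with a power of order at least $n$, Lemma \ref{lem_1} provides $z_0\in\mathbb{D}$, $\zeta\in\partial\mathbb{D}\setminus\mathbf{E}(q)$, and a real number $m\ge n$ with
\[
p(z_0)=q(\zeta),\qquad z_0 p'(z_0)=m\,\zeta q'(\zeta).
\]
Dividing these and substituting $\zeta q'(\zeta)/q(\zeta)=h(\zeta)/n$ yields
\[
\frac{z_0 p'(z_0)}{p(z_0)}=\frac{m}{n}\,h(\zeta).
\]
Since $h(\mathbb{D})$ is a domain starlike with respect to $0$ and $h(\zeta)\in\partial h(\mathbb{D})$, any rescaling $t\,h(\zeta)$ with $t\ge 1$ lies outside $h(\mathbb{D})$: if $t\,h(\zeta)$ were in $h(\mathbb{D})$ for some $t>1$, then starlikeness would place the segment $[0,t\,h(\zeta)]$ in $h(\mathbb{D})$, forcing $h(\zeta)$ into $h(\mathbb{D})$, a contradiction. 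Taking $t=m/n\ge 1$ shows $z_0p'(z_0)/p(z_0)\notin h(\mathbb{D})$, contradicting the hypothesis $zp'/p\prec h$. Hence $p\prec q$.

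The main obstacle I anticipate is the careful verification that the constructed $q$ actually belongs to $\mathcal{Q}$; once its univalence and boundary regularity are in hand, the contradiction through Lemma \ref{lem_1} together with the starlikeness of $h$ is essentially mechanical.
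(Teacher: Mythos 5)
Your plan founders at precisely the step you yourself flagged: the membership $q\in\mathcal{Q}$. For Lemma \ref{lem_1} to apply, the dominant $q(z)=a\exp\bigl(\tfrac1n\int_0^z h(t)t^{-1}\,\mathrm{d}t\bigr)$ must in particular be univalent in $\mathbb{D}$, and your justification --- that $zq'(z)/q(z)=h(z)/n$ exhibits $q$ as spirallike --- is false. Spirallikeness requires $\Re\{\mathrm{e}^{-\mathrm{i}\gamma}zq'(z)/q(z)\}>0$ in $\mathbb{D}$ for some fixed $|\gamma|<\pi/2$; here $zq'/q=h/n$ maps $\mathbb{D}$ onto the starlike domain $\tfrac1n h(\mathbb{D})$, which is an open neighbourhood of the origin, so its values assume every argument and no half-plane condition can hold. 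Worse, $q$ genuinely need not be univalent: take $h(z)=Mz$ with $M>\pi$ (a starlike function), $n=1$, $a=1$; then $q(z)=\mathrm{e}^{Mz}$ and $q(z_1)=q(z_2)$ for the distinct points $z_{1,2}=\pm\pi\mathrm{i}/M\in\mathbb{D}$. Hence $q\notin\mathcal{Q}$, Lemma \ref{lem_1} cannot be invoked with this $q$, and no dilation trick ($h(\rho z)$, $\rho\to 1$) repairs the matter, since the failure lies in the interior univalence of the dominant, not in its boundary regularity. (The lemma itself remains true for such $h$: the conclusion $p\prec q$ is subordination in the Schwarz-function sense, which does not require $q$ to be univalent.)

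The repair --- and this is how the cited source argues (the paper does not prove this lemma; it quotes \cite[Corollary 3.1d.1]{MM}) --- is to run your argument one level down, on the logarithm. From $zp'/p\prec h$ the function $zp'/p$ is analytic, so $p$ has no zeros in $\mathbb{D}$ and $P(z)=\log\bigl(p(z)/a\bigr)$ with $P(0)=0$ is well defined and single-valued. Put $Q(z)=\tfrac1n\int_0^z h(t)t^{-1}\,\mathrm{d}t$. Since $zQ'(z)=h(z)/n$ is starlike, Alexander's theorem makes $Q$ convex, hence univalent; this is the dominant to which Lemma \ref{lem_1} (with the standard $\rho$-dilation to secure boundary regularity) may legitimately be applied. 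If $P\not\prec Q$, you obtain $z_0$, $\zeta$ and $m\ge n$ with $z_0P'(z_0)=m\zeta Q'(\zeta)=\tfrac{m}{n}h(\zeta)$, and your starlike-rescaling observation --- which is correct, and is exactly the mechanism of the genuine proof --- shows that this point lies outside $h(\mathbb{D})$, contradicting $zP'=zp'/p\prec h$. Thus $P\prec Q$, say $P=Q\circ\omega$ with $\omega$ a Schwarz function, and exponentiation gives $p=a\mathrm{e}^{P}=(a\mathrm{e}^{Q})\circ\omega=q\circ\omega$, i.e.\ $p\prec q$. So everything you do after invoking Lemma \ref{lem_1} is sound, but it must be applied to the pair $(P,Q)$, where the dominant is convex; applied to $(p,q)$ directly it rests on a univalence claim that is simply not true.
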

\begin{lem}[\cite{Libera}]\label{Libera}
Let $f\in \mathcal{CV}$. If $g(z)=(1/z)\int_{0}^{z}f(t)\, \mathrm{d}t$, then $g$ is also univalent and convex in $\mathbb{D}$.
\end{lem}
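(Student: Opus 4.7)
The plan is to reduce convexity of $g$ to a first-order differential subordination and then discharge that subordination by a Miller--Mocanu style contradiction via Lemma~\ref{lem_1}. The pivot is the identity $(zg(z))'=f(z)$, immediate from the definition of $g$: rewritten as $g(z)+zg'(z)=f(z)$ and differentiated once, it gives $f'(z)=g'(z)(1+p(z))$, where
\[
p(z):=1+\frac{zg''(z)}{g'(z)},\qquad Q(z):=1+\frac{zf''(z)}{f'(z)}.
\]
Taking the logarithmic derivative of $f'=g'(1+p)$ and multiplying by $z$ then yields the clean first-order relation
\begin{equation*}
Q(z)=p(z)+\frac{zp'(z)}{1+p(z)}.
\end{equation*}
Since $f\in \mathcal{CV}$ is equivalent to $Q\prec q_{*}$ with $q_{*}(z):=(1+z)/(1-z)$, everything boils down to showing that this differential relation forces $p\prec q_{*}$.

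For that step I would argue by contradiction. Note $q_{*}\in \mathcal{Q}$ with $q_{*}(0)=1$ and $\mathbf{E}(q_{*})=\{1\}$. If $p\not\prec q_{*}$, Lemma~\ref{lem_1} furnishes points $z_0\in \mathbb{D}$, $\zeta\in \partial\mathbb{D}\setminus\{1\}$ and a real $m\ge 1$ with $p(z_0)=q_{*}(\zeta)$ purely imaginary and $z_0p'(z_0)=m\zeta q_{*}'(\zeta)$. The crucial observation is that $\zeta q_{*}'(\zeta)=2\zeta/(1-\zeta)^{2}$ is a strictly \emph{negative real number} for every $\zeta\in \partial\mathbb{D}\setminus\{1\}$ (writing $\zeta=e^{i\theta}$ gives $-1/(2\sin^{2}(\theta/2))$). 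Writing $p(z_0)=i\rho$ and substituting into the ODE collapses $\Re Q(z_0)$ to a strictly negative expression of the shape $-mL/(1+\rho^{2})$, contradicting $\Re Q>0$. Hence $p\prec q_{*}$, so $\Re(1+zg''(z)/g'(z))>0$ in $\mathbb{D}$, giving convexity of $g$; univalence then follows from convexity and $g'(0)=1/2\ne 0$ (Study's theorem).

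The main obstacle I anticipate is technical rather than conceptual: the function $p=1+zg''/g'$ must be analytic on all of $\mathbb{D}$, which requires $g'(z)\ne 0$ throughout $\mathbb{D}$---something not automatic from the hypothesis on $f$. I would handle this by first carrying out the contradiction argument on the largest subdisc on which $g'$ is nonvanishing and then using the uniform bound on $|p|$ coming from $p\prec q_{*}$ to exclude zeros of $g'$ on the boundary of that subdisc, so that the subordination propagates to all of $\mathbb{D}$. The remaining bookkeeping---the logarithmic-derivative computation producing the ODE and the boundary evaluation of $\zeta q_{*}'(\zeta)$---is routine and can be double-checked in power series.
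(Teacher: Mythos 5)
The paper offers no proof of Lemma \ref{Libera} at all: it is quoted as a classical result from \cite{Libera}, so your argument is necessarily a different route, and its core is correct. What you do is re-derive Libera's theorem with exactly the machinery the paper uses for Theorem \ref{p_q}: from $g(z)+zg'(z)=f(z)$ you get $f'=g'(1+p)$ with $p=1+zg''/g'$, hence $1+zf''/f'=p+zp'/(1+p)$, and Lemma \ref{lem_1} with $q_{*}(z)=(1+z)/(1-z)$ gives the contradiction. Your boundary computations check out: $q_{*}(\zeta)$ is purely imaginary on $\partial\mathbb{D}\setminus\{1\}$, $\zeta q_{*}'(\zeta)=-1/(2\sin^{2}(\theta/2))<0$, and with $p(z_0)=i\rho$, $z_0p'(z_0)=-mL$ one gets $\Re\left\{1+z_0f''(z_0)/f'(z_0)\right\}=-mL/(1+\rho^{2})<0$, contradicting convexity of $f$. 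What your approach buys is self-containedness: the lemma becomes a corollary of the same admissibility lemma \cite{MM} that drives Theorem \ref{p_q}, rather than an external citation (Libera's original 1965 argument predates this theory, and the result is nowadays also obtainable from the Ruscheweyh--Sheil-Small convolution theorem \cite{RSS}, since the transform $f\mapsto g$ is a Hadamard product with a fixed convex kernel).

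The one step needing repair is precisely the one you flag, the nonvanishing of $g'$, and your proposed mechanism for it is wrong even though the plan is right. The subordination $p\prec q_{*}$ gives no ``uniform bound on $|p|$'': the target $q_{*}(\mathbb{D})$ is the unbounded half-plane $\Re w>0$, so boundedness cannot be what excludes zeros of $g'$. What works is the half-plane constraint itself, combined with a dilation to legitimize the use of Lemma \ref{lem_1}, which is stated for functions analytic on all of $\mathbb{D}$ and cannot be invoked literally ``on a subdisc''. Concretely: let $r_0=\sup\left\{r\le 1\colon g'\neq 0 \text{ on } |z|<r\right\}$, positive since $g'(0)=1/2$. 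For $r<r_0$ the dilations $f_r(z)=f(rz)/r$ are again convex, the transform commutes with dilation, so $g_r(z)=g(rz)/r$ has $p_r(z)=p(rz)$ analytic on $\overline{\mathbb{D}}$, and your contradiction argument applied to $p_r$ gives $\Re p>0$ on $|z|<r_0$. If $r_0<1$, some $z_1$ with $|z_1|=r_0$ is a zero of $g'$ of order $k\ge1$, hence a pole of $p$; writing $g'(z)=(z-z_1)^{k}h(z)$ with $h(z_1)\neq0$ gives $p(z)=kz/(z-z_1)+{\rm O}(1)$, so $\Re p(tz_1)\to-\infty$ as $t\uparrow 1$, contradicting $\Re p>0$ on $|z|<r_0$. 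Hence $r_0=1$, $p$ is analytic in $\mathbb{D}$, and your argument runs globally (the degenerate case $p\equiv1$, i.e.\ $g(z)=z/2$, is trivially convex). With this substitution your proof is complete.
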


\begin{thm}\label{p_q}
	Let $p(z)=1+p_1z+\cdots$ be  analytic function in the unit disk $\mathbb{D}$ with $p(z)\not\equiv 1$. If
	\begin{equation}\label{p'}
	\Re\left\{\frac{zp'(z)}{p(z)}\right\}<\frac{\lambda}{2}\qquad \myp{0<\uplambda\le1,\, z\in \mathbb{D}},
	\end{equation}
	then there exists $n\ge 1$ such that 
	\[
p(z)\prec\myp{1+z^n}^{\lambda/n}=:q(z)\quad \myp{z\in \mathbb{D}}.
	\]
\end{thm}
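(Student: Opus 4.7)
The plan is to take $n$ to be the order of the first non-vanishing coefficient of $p(z)-1$ at the origin and then argue by contradiction using the Miller--Mocanu admissibility lemma (Lemma~\ref{lem_1}). Since $p(0)=1$ and $p\not\equiv 1$, we may write $p(z)=1+p_n z^n+O(z^{n+1})$ with $p_n\neq 0$ and $n\geq 1$. Set $q(z):=(1+z^n)^{\lambda/n}$; then $q(0)=1$, $q(z)=1+(\lambda/n)z^n+O(z^{n+1})$, and a direct differentiation gives
\[
\frac{zq'(z)}{q(z)} \;=\; \frac{\lambda\, z^n}{1+z^n}.
\]
The standard identity $\Re\{e^{i\varphi}/(1+e^{i\varphi})\}=1/2$ (valid whenever $e^{i\varphi}\neq -1$) then yields $\Re\{\zeta q'(\zeta)/q(\zeta)\}=\lambda/2$ for every $\zeta\in\partial\mathbb{D}$ with $\zeta^n\neq -1$. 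Thus $q$ has exactly the boundary behaviour matching the hypothesis on $p$.

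Now suppose, for contradiction, that $p\not\prec q$ in $\mathbb{D}$. Applying Lemma~\ref{lem_1} produces points $z_0\in\mathbb{D}$, $\zeta_0\in\partial\mathbb{D}\setminus\mathbf{E}(q)$, and a real number $m\geq n\geq 1$ with $p(z_0)=q(\zeta_0)$ and $z_0 p'(z_0)=m\,\zeta_0 q'(\zeta_0)$. Dividing these two equalities gives
\[
\frac{z_0 p'(z_0)}{p(z_0)} \;=\; m\cdot\frac{\zeta_0 q'(\zeta_0)}{q(\zeta_0)} \;=\; \frac{m\lambda\,\zeta_0^n}{1+\zeta_0^n},
\]
so that $\Re\{z_0 p'(z_0)/p(z_0)\}=m\lambda/2\geq n\lambda/2\geq \lambda/2$. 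This contradicts the hypothesis $\Re\{zp'(z)/p(z)\}<\lambda/2$, and the proof is complete.

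The main technical point I expect to be delicate is justifying the use of Lemma~\ref{lem_1} with the chosen dominant $q$. For $n\geq 2$ the function $q(z)=(1+z^n)^{\lambda/n}$ is $n$-to-one on $\mathbb{D}$ and therefore fails the global injectivity stipulated by the definition of $\mathcal{Q}$. However, Lemma~\ref{lem_1} is proved by a local extremum argument at the touching point $\zeta_0$ and needs only that $q$ extend analytically and be locally univalent in some boundary neighbourhood of $\zeta_0$; this holds precisely when $q'(\zeta_0)\neq 0$, that is when $\zeta_0^n\neq -1$. Absorbing the finite set $\{\zeta\in\partial\mathbb{D}:\zeta^n=-1\}$ (where $q$ has branch-type behaviour) into an enlargement of $\mathbf{E}(q)$ removes the injectivity obstruction, and the conclusion of Lemma~\ref{lem_1} then carries over to our setting unchanged. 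This is the only step where care is needed; the rest of the argument is bookkeeping with the explicit form of $\zeta q'(\zeta)/q(\zeta)$.
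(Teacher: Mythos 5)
Your argument is the paper's argument: the same choice of $n$ (the order of vanishing of $p-1$ at the origin), the same dominant $q(z)=(1+z^n)^{\lambda/n}$, the same contradiction via Lemma~\ref{lem_1}, and the same boundary computation $\Re\{\zeta_0 q'(\zeta_0)/q(\zeta_0)\}=\lambda/2$. You have also correctly isolated the step that the paper performs in silence: Lemma~\ref{lem_1} requires $q\in\mathcal{Q}$, and for $n\ge 2$ the function $q(z)=(1+z^n)^{\lambda/n}$ is not in $\mathcal{Q}$, since $q(e^{2\pi i/n}z)=q(z)$ makes it $n$-to-one on $\mathbb{D}$. But your repair of that step does not work. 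First, $\mathbf{E}(q)$ is by definition a subset of $\partial\mathbb{D}$, so enlarging it by the boundary branch points cannot restore injectivity on $\overline{\mathbb{D}}\setminus\mathbf{E}(q)$, which still contains all of $\mathbb{D}$; the obstruction is interior non-injectivity, not the points with $\zeta^n=-1$. Second, the proof of Lemma~\ref{lem_1} is not a local argument at $\zeta_0$: it constructs the Schwarz function $\omega=q^{-1}\circ p$ on the disc $\{|z|\le|z_0|\}$, which needs a globally defined inverse of $q$ on $q(\mathbb{D})$, and only then applies Jack's lemma to $\omega$; local univalence of $q$ near $\zeta_0$ is not enough.

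Moreover, this gap cannot be closed by any strengthening of Lemma~\ref{lem_1}, because the refined statement that both you and the paper are actually proving --- $p\prec(1+z^n)^{\lambda/n}$ with $n$ the exact order of vanishing of $p-1$ --- is false for every $n\ge2$. Fix $0<|a|<1$, let $\omega(z)=c\,z^n(z-a)/(1-\bar{a}z)$ with $c\neq0$ small, and put $p=(1+\omega)^{\lambda/n}$. Then $p-1$ vanishes at $0$ to order exactly $n$, and since $zp'(z)/p(z)=(\lambda/n)\,z\omega'(z)/(1+\omega(z))$ tends to $0$ uniformly on $\mathbb{D}$ as $c\to0$ (the pole of $\omega$ lies outside $\overline{\mathbb{D}}$), the hypothesis \eqref{p'} holds for $|c|$ small. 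Yet $p\prec(1+z^n)^{\lambda/n}$ would mean $(1+\omega)^{\lambda/n}=(1+\omega_1^{\,n})^{\lambda/n}$ for some Schwarz function $\omega_1$; comparing principal logarithms (they agree at $0$ and their difference is a continuous, discretely valued function on the connected set $\mathbb{D}$) forces $\omega=\omega_1^{\,n}$, which is impossible because $\omega$ has a simple zero at $z=a$, while every zero of $\omega_1^{\,n}$ has multiplicity divisible by $n$. So for this $p$ no points $z_0,\zeta_0$ and $m$ as in the conclusion of Lemma~\ref{lem_1} can exist, i.e.\ the extension of that lemma to this non-univalent $q$ is genuinely false, not merely unproved. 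What survives is the literal statement of the theorem, ``there exists $n\ge1$'': indeed $\Re\{zp'/p\}<\lambda/2$ means $zp'/p\prec h(z):=\lambda z/(1+z)$, which is starlike with $h(0)=0$, and Lemma~\ref{lem_3} then gives $p\prec(1+z)^{\lambda/n}$ (in particular the case $n=1$); but the dominant $(1+z^n)^{\lambda/n}$ asserted here, and relied upon later in Theorem~\ref{lem_f_z} and Theorem~\ref{thm_log_coff_G}, cannot be obtained. The paper's own proof has exactly the same defect, so your proposal faithfully reproduces it --- including its fatal flaw; the delicate point you flagged is not a technicality but the place where the argument breaks down.
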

\begin{proof}
	Since $p(z)\not\equiv 1$, thus there exists $k\ge 1$ such that $p_k\neq 0$, where $p_k$ is the $kn$-th coefficient in the expansion of the function $p(z)$. Suppose, on the contrary, that $p(z)\not\prec q(z)$ on $\mathbb{D}$. Then by Lemma \ref{lem_1} there exist $z_0\in \mathbb{D}$ and $\zeta_0 \in\partial\mathbb{D}$ with  $\zeta_0\neq-1$ such that
	\begin{equation*}
	p(z_0)=q(\zeta_0),\quad  z_0p'(z_0)= m \zeta_0 q'(\zeta_0)\quad m \ge 1.
	\end{equation*}
	Thus
	\[
	\Re\left\{\frac{z_0p'(z_0)}{p(z_0)}\right\}=\Re\left\{\frac{m\zeta_0q'(\zeta_0)}{q(\zeta_0)}\right\}=m\lambda\Re\left\{\frac{\zeta_0^n}{1+\zeta_0^n}\right\}=\frac{m\lambda}{2}\ge \frac{\lambda}{2}.
	\]
	But the above contradicts the assumption \eqref{p'} and therefore $p\prec q$ on $\mathbb{D}$ follows.
\end{proof}
Letting $p(z)=zf'(z)/f(z)$ in Theorem \ref{p_q}, we obtain the  following corollary.

\begin{cor}\label{cor_G_lambda_starlike}
	If a functions $f$  defined by \eqref{eq_expand_f} satisfies the following condition
\begin{equation}\label{Cor 3.6}
	 \Re\left\{1+\frac{zf''(z)}{f'(z)}-\frac{zf'(z)}{f(z)} \right\}<\frac{\lambda}{2}\quad \left(z\in \mathbb{D}\right),
\end{equation}
then there exists $n\ge 1$ such that  $zf'(z)/f(z)\prec \myp{1+z^n}^{\lambda/n}$ in $\mathbb{D}$.
\end{cor}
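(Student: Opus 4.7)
The corollary is essentially a direct specialization of Theorem \ref{p_q}, so the plan is to set $p(z) := zf'(z)/f(z)$ and check that the hypotheses transfer. First I would observe that, writing $f(z) = z + a_{2}z^{2} + \cdots$, the quotient $f(z)/z = 1 + a_{2}z + \cdots$ is analytic and nonzero at the origin; for the expression in \eqref{Cor 3.6} to be meaningful on all of $\mathbb{D}$, one must implicitly have $f(z) \neq 0$ on $\mathbb{D}\setminus\{0\}$ and $f'(z) \neq 0$ on $\mathbb{D}$. Consequently $p$ is analytic in $\mathbb{D}$ with $p(0)=1$ and admits an expansion of the form $p(z) = 1 + a_{2}z + \cdots$, matching the normalization required in Theorem \ref{p_q}.

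Next I would carry out a short logarithmic differentiation. From $p(z) = zf'(z)/f(z)$, viewed as $f'(z)$ divided by $f(z)/z$, one finds
\begin{equation*}
\frac{zp'(z)}{p(z)} \;=\; 1 + \frac{zf''(z)}{f'(z)} - \frac{zf'(z)}{f(z)},
\end{equation*}
so the assumption \eqref{Cor 3.6} is exactly the subordination hypothesis \eqref{p'} of Theorem \ref{p_q} applied to $p$.

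Finally I would dispose of the degenerate case separately: if $p \equiv 1$, then $zf'(z) \equiv f(z)$, which together with $f(z) = z + a_{2}z^{2}+\cdots$ forces $f(z) \equiv z$, and then $p \equiv 1 \prec (1+z^{n})^{\lambda/n}$ holds trivially for every $n \ge 1$ via the Schwarz function $w \equiv 0$. Otherwise $p \not\equiv 1$, and Theorem \ref{p_q} directly supplies an integer $n \ge 1$ such that $zf'(z)/f(z) \prec (1+z^{n})^{\lambda/n}$ in $\mathbb{D}$. There is no genuine obstacle here; the entire content of the corollary is the bookkeeping involved in identifying $\Re\{zp'/p\}$ with the left-hand side of \eqref{Cor 3.6}, after which Theorem \ref{p_q} does all the work.
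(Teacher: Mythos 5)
Your proof is correct and takes essentially the same route as the paper, which obtains this corollary in one line by substituting $p(z)=zf'(z)/f(z)$ into Theorem \ref{p_q}. The extra details you supply --- the logarithmic-differentiation identity $zp'(z)/p(z)=1+zf''(z)/f'(z)-zf'(z)/f(z)$ and the separate treatment of the degenerate case $p\equiv 1$ (forced to be $f(z)=z$, for which the subordination holds trivially) --- are exactly the bookkeeping the paper leaves implicit, and they are handled correctly.
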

We note that 
$$1+\dfrac{z\mathbf{G}_{f}''(z)}{\mathbf{G}_{f}'(z)}=1+\frac{zf''(z)}{f'(z)}-\frac{zf'(z)}{f(z)},$$
and
$$\dfrac{z\mathbf{N}_{f}'(z)}{\mathbf{N}_{f}(z)}=1+\frac{zf''(z)}{f'(z)}-\frac{zf'(z)}{f(z)},$$
therefore the Corollary \ref{cor_G_lambda_starlike} may be rewritten as 
\begin{cor}\label{cor_condition_uni}
	Let $f\in\mathcal{A}$. If $\mathbf{G}_{f}\in \mathcal{G}(\lambda)$ or  $\mathbf{N}_{f}\in \mathcal{N}(\lambda)$,  then there exists $n\ge 1$ such that   $f\in \mathcal{ST}_{ss}(\lambda/n)\subset \mathcal{ST}$.
\end{cor}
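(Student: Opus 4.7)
My plan is to reduce each of the two hypotheses to the setting of Corollary \ref{cor_G_lambda_starlike} via the logarithmic-derivative identities displayed immediately before the statement, and then read off $f\in\mathcal{ST}_{ss}(\lambda/n)$ directly from Definition \ref{class_ST_ss_lambda}.

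First I would verify the identities relating $z\mathbf{G}_f''(z)/\mathbf{G}_f'(z)$ and $z\mathbf{N}_f'(z)/\mathbf{N}_f(z)$ to the expression $1+zf''(z)/f'(z)-zf'(z)/f(z)$. Both follow from a one-line logarithmic-derivative computation: from \eqref{eq_G_f} one has $\mathbf{G}_f'(z)=zf'(z)/f(z)$, and from \eqref{eq_N_f} that $\mathbf{N}_f(z)=z\mathbf{G}_f'(z)=z^2f'(z)/f(z)$, so taking $\log$-derivatives and multiplying by $z$ gives the two identities (up to additive constants). Tracking these constants, the defining inequality $\Re\{1+z\mathbf{G}_f''/\mathbf{G}_f'\}<1+\lambda/2$ of $\mathbf{G}_f\in\mathcal{G}(\lambda)$ and the defining inequality $\Re\{z\mathbf{N}_f'/\mathbf{N}_f\}<1+\lambda/2$ of $\mathbf{N}_f\in\mathcal{N}(\lambda)$ each translate into the single condition
\[
\Re\left\{1+\frac{zf''(z)}{f'(z)}-\frac{zf'(z)}{f(z)}\right\}<\frac{\lambda}{2}\qquad(z\in\mathbb{D}),
\]
which is precisely the hypothesis \eqref{Cor 3.6} of Corollary \ref{cor_G_lambda_starlike}.

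Applying Corollary \ref{cor_G_lambda_starlike} then produces an integer $n\ge 1$ with $zf'(z)/f(z)\prec(1+z^n)^{\lambda/n}=\mathfrak{q}_{\lambda/n}(z^n)$ in $\mathbb{D}$. Since $z\mapsto z^n$ is itself a Schwarz function on $\mathbb{D}$, this automatically entails $zf'(z)/f(z)\prec \mathfrak{q}_{\lambda/n}(z)$, which is by definition the membership $f\in\mathcal{ST}_{ss}(\lambda/n)$ of Definition \ref{class_ST_ss_lambda}; and the containment $\mathcal{ST}_{ss}(\lambda/n)\subset\mathcal{ST}$ is part of that very definition. Since all the substantive work is already packed into Corollary \ref{cor_G_lambda_starlike} (itself resting on Theorem \ref{p_q} via Lemma \ref{lem_1}), I anticipate no real obstacle: the present corollary is essentially an unpacking of the definitions of $\mathbf{G}_f$ and $\mathbf{N}_f$, and the only point demanding attention is the bookkeeping of additive constants so that the translated inequality ends with $\lambda/2$ rather than $1+\lambda/2$ on its right-hand side.
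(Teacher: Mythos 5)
Your proposal is correct and takes essentially the same route as the paper, which obtains this corollary exactly by translating the hypotheses $\mathbf{G}_{f}\in \mathcal{G}(\lambda)$ and $\mathbf{N}_{f}\in \mathcal{N}(\lambda)$ through the logarithmic-derivative identities into the condition \eqref{Cor 3.6} and then invoking Corollary \ref{cor_G_lambda_starlike} together with Definition \ref{class_ST_ss_lambda}. Your insistence on tracking the additive constants is in fact slightly more careful than the paper's displayed identities, which are off by $1$ (one has exactly $z\mathbf{G}_{f}''(z)/\mathbf{G}_{f}'(z)=1+zf''(z)/f'(z)-zf'(z)/f(z)$ and $z\mathbf{N}_{f}'(z)/\mathbf{N}_{f}(z)=2+zf''(z)/f'(z)-zf'(z)/f(z)$), but both arguments land on the same inequality $\Re\left\{1+zf''(z)/f'(z)-zf'(z)/f(z)\right\}<\lambda/2$ and hence on the same conclusion.
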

Set now  $p(z)=f'(z)$  in Theorem \ref{p_q}. Then the condition \eqref{p'} is equivalent to the fact $\Re\,( zf''(z)/f'(z)) < \lambda/2$ or $\Re\, (1+zf''(z)/f'(z)) < 1+\lambda/2$ that defines functions from the class $\mathcal{G}(\lambda)$. This observation and Theorem \ref{p_q} gives the following integral representation in $\mathcal{G}(\lambda)$.

\begin{cor}\label{cor_G_lambda}
	If a functions $f$  defined by \eqref{eq_expand_f} belongs to the class $\mathcal{G}(\lambda)$, then there exists $n\ge 1$ such that  $f'\prec
	\myp{1+z^n}^{\lambda/n}$ in $\mathbb{D}$, and
\begin{equation}\label{eq_cor_G_lambda}
	g(z)=z\exp\myp{\int_{0}^{z}\dfrac{f'(t)-1}{t}\,\mathrm{d}t}\in \mathcal{ST}_{ss}(\lambda/n).
\end{equation}
\end{cor}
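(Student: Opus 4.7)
The plan is to apply Theorem~\ref{p_q} directly to $p(z):=f'(z)$. Since $f\in\mathcal{G}(\lambda)$ is exactly the condition $\Re\{1+zf''(z)/f'(z)\}<1+\lambda/2$, i.e.\ $\Re\{zf''(z)/f'(z)\}<\lambda/2$, and for this choice $zp'(z)/p(z)=zf''(z)/f'(z)$, the hypothesis~\eqref{p'} holds verbatim (the degenerate case $f'\equiv 1$ is trivial, with $g(z)=z$). Theorem~\ref{p_q} then produces an integer $n\ge 1$ for which
\[
f'(z)\prec (1+z^n)^{\lambda/n}\qquad(z\in\mathbb{D}),
\]
which is the first assertion.

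For the second assertion I would take a logarithmic derivative of~\eqref{eq_cor_G_lambda}. Since $f'(0)=1$, the integrand $(f'(t)-1)/t$ is analytic at the origin, so $g\in\mathcal{A}$. A direct computation gives
\[
\frac{g'(z)}{g(z)}=\frac{1}{z}+\frac{f'(z)-1}{z}=\frac{f'(z)}{z},
\]
whence $zg'(z)/g(z)=f'(z)$. The subordination just obtained for $f'$ therefore transfers to $zg'/g$ and yields
\[
\frac{zg'(z)}{g(z)}\prec (1+z^n)^{\lambda/n}=\mathfrak{q}_{\lambda/n}(z^n).
\]

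The only step requiring a moment of care is upgrading this to the subordination $\prec \mathfrak{q}_{\lambda/n}(z)$ demanded by Definition~\ref{class_ST_ss_lambda}. Since $\omega(z):=z^n$ is a Schwarz function and $\mathfrak{q}_{\lambda/n}$ is (convex) univalent, one has $\mathfrak{q}_{\lambda/n}(z^n)\prec \mathfrak{q}_{\lambda/n}(z)$, and transitivity of subordination gives
\[
\frac{zg'(z)}{g(z)}\prec \mathfrak{q}_{\lambda/n}(z),
\]
so $g\in\mathcal{ST}_{ss}(\lambda/n)$ as required. I do not anticipate any substantive obstacle: once the identification $p=f'$ is made, the argument is Theorem~\ref{p_q}, one line of logarithmic differentiation, and a transitivity remark.
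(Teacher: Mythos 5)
Your proposal is correct and follows essentially the same route as the paper, which likewise obtains the corollary by setting $p(z)=f'(z)$ in Theorem~\ref{p_q} and noting that condition~\eqref{p'} then becomes exactly the defining inequality of $\mathcal{G}(\lambda)$. The additional details you supply --- the logarithmic differentiation showing $zg'(z)/g(z)=f'(z)$ and the transitivity step $\mathfrak{q}_{\lambda/n}(z^n)\prec\mathfrak{q}_{\lambda/n}(z)$ --- are precisely the routine verifications the paper leaves implicit, so there is no substantive difference in approach.
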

The  class $\mathcal{ST}_{ss}(\lambda)$ is  a subclass of the class of starlike functions, hence by the corollary \ref{cor_G_lambda}, we can obtain a representation in the class $\mathcal{G}(\lambda)$. Namely, $f\in \mathcal{G}(\lambda)$ if and only there exists a function $g\in\mathcal{ST}_{ss}(\lambda) $  such that
\begin{equation}
f(z)=\mathbf{G}_g(z)\quad \left(z\in \mathbb{D} \right)
\end{equation}
where $\mathbf{G}_g$ is given by \eqref{eq_G_f}.

Setting  now  $p(z)=f(z)/z$ we have $zp'(z)/p(z) = zf'(z)/f(z)-1$ and  Theorem \ref{p_q} gives:
\begin{cor}\label{cor_N_lambda}
If a functions $f$ defined by \eqref{eq_expand_f} belongs to the class $\mathcal{N}(\lambda)$, then there exists $n\ge 1$ such that    $f(z)/z\prec \myp{1+z^n}^{\lambda/n}$ in $\mathbb{D}$ and
\begin{equation}\label{eq_cor_N_lambda}
	g(z)=z\exp\myp{\int_{0}^{z}\frac{f(t)-t}{t^2}\,\mathrm{d}t}\in \mathcal{ST}_{ss}(\lambda/n).
\end{equation}
\end{cor}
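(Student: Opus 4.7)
The plan is to follow the same template used for Corollary \ref{cor_G_lambda}, but with the logarithmic derivative $zp'(z)/p(z)$ chosen so that the hypothesis $f\in\mathcal{N}(\lambda)$ lands in the form required by Theorem \ref{p_q}. First I would set $p(z)=f(z)/z=1+a_2z+\cdots$, which is analytic in $\mathbb{D}$ with $p(0)=1$, and compute
\[
\frac{zp'(z)}{p(z)}=\frac{zf'(z)-f(z)}{f(z)}=\frac{zf'(z)}{f(z)}-1.
\]
The defining condition $\Re\{zf'(z)/f(z)\}<1+\lambda/2$ of $\mathcal{N}(\lambda)$ then translates exactly into $\Re\{zp'(z)/p(z)\}<\lambda/2$. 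If $p\equiv 1$ then $f(z)=z$ and the conclusion is trivial with any $n$; otherwise Theorem \ref{p_q} applies and produces an integer $n\ge 1$ such that $p(z)\prec(1+z^n)^{\lambda/n}$, which is the first claim $f(z)/z\prec(1+z^n)^{\lambda/n}$.

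For the second claim I would recover $g$ by logarithmic differentiation. Taking the derivative of $\log(g(z)/z)=\int_{0}^{z}(f(t)-t)/t^2\,\mathrm{d}t$ gives
\[
\frac{g'(z)}{g(z)}-\frac{1}{z}=\frac{f(z)-z}{z^2},\qquad\text{hence}\qquad \frac{zg'(z)}{g(z)}=\frac{f(z)}{z}=p(z).
\]
In particular $g$ is well defined and $g\in\mathcal{A}$, and the subordination from Step~1 becomes
\[
\frac{zg'(z)}{g(z)}\prec(1+z^n)^{\lambda/n}.
\]

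To upgrade this to $g\in\mathcal{ST}_{ss}(\lambda/n)$ I still need $zg'(z)/g(z)\prec\mathfrak{q}_{\lambda/n}(z)=(1+z)^{\lambda/n}$. This follows from the transitivity of subordination: the Schwarz function $\omega(z)=z^n$ satisfies $\omega(0)=0$ and $|\omega(z)|<1$ in $\mathbb{D}$, so $(1+z^n)^{\lambda/n}=\mathfrak{q}_{\lambda/n}(\omega(z))\prec\mathfrak{q}_{\lambda/n}(z)$ because $\mathfrak{q}_{\lambda/n}$ is univalent. Composing the two subordinations yields $zg'(z)/g(z)\prec\mathfrak{q}_{\lambda/n}(z)$, which is precisely the definition of $\mathcal{ST}_{ss}(\lambda/n)$.

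No step looks genuinely hard here since Theorem \ref{p_q} already does the heavy lifting; the only thing to be careful about is the identification $zg'(z)/g(z)=f(z)/z$ (a single line of computation from the definition of $g$) and the fact that the subordination target in Theorem \ref{p_q} is $(1+z^n)^{\lambda/n}$ rather than $\mathfrak{q}_{\lambda/n}(z)$, which is precisely why the final Schwarz-function step is needed. The argument mirrors the proof of Corollary \ref{cor_G_lambda} almost verbatim, with the substitution $p(z)=f(z)/z$ in place of $p(z)=f'(z)$.
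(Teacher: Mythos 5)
Your proof is correct and follows essentially the same route as the paper, which obtains this corollary precisely by substituting $p(z)=f(z)/z$ into Theorem \ref{p_q} (noting $zp'(z)/p(z)=zf'(z)/f(z)-1$ turns the $\mathcal{N}(\lambda)$ condition into hypothesis \eqref{p'}). The details you make explicit --- the trivial case $f(z)=z$, the identity $zg'(z)/g(z)=f(z)/z$ from the definition of $g$, and the chain $(1+z^n)^{\lambda/n}=\mathfrak{q}_{\lambda/n}(z^n)\prec\mathfrak{q}_{\lambda/n}(z)$ needed to land in $\mathcal{ST}_{ss}(\lambda/n)$ --- are exactly the steps the paper leaves implicit.
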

Alternatively, $f\in \mathcal{N}(\lambda)$ if and only there exists a function $g\in\mathcal{ST}_{ss}(\lambda) $  such that
\begin{equation}
f(z)=\mathbf{N}_g(z)\quad \left(z\in \mathbb{D} \right)
\end{equation}
where $\mathbf{N}_g$ given by \eqref{eq_N_f}.
\begin{thm}\label{lem_f_z}
	If a function $f$ with the power series \eqref{eq_expand_f} belongs to the class $\mathcal{G}(\uplambda)$, then
	\begin{equation}\label{eq_lem_f_z}
	\dfrac{f(z)}{z}\prec \dfrac{1}{z}\int_{0}^{z}(1+t^n)^{\lambda/n}\, \mathrm{d}t=\dfrac{\mathbf{G}_{F_{\lambda/n,n}}(z)}{z}\quad \myp{n\ge 1,\ z\in \mathbb{D}}.
	\end{equation}
	where $\mathbf{G}_{F_{\lambda/n,n}}$ given by \eqref{EQ_G}. Especially for $n=1$, we have
	\begin{equation}\label{eq_lem_f_z_1}
	\dfrac{f(z)}{z}\prec \dfrac{\mathbf{G}_{F_{\lambda}}(z)}{z}\quad \myp{z\in \mathbb{D}},
	\end{equation}
	where $\mathbf{G}_{F_{\lambda}}$   given by \eqref{EQ_G_1}.
\end{thm}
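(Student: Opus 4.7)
The plan is to combine the subordination supplied by Corollary \ref{cor_G_lambda} with the Hallenbeck--Ruscheweyh-type differential subordination principle recorded in Lemma \ref{lem_2}. Starting from the hypothesis $f\in\mathcal{G}(\lambda)$, Corollary \ref{cor_G_lambda} produces an integer $n\ge 1$ for which
\[
f'(z)\prec (1+z^n)^{\lambda/n}\qquad (z\in \mathbb{D}).
\]

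Next I would introduce the auxiliary function $p(z):=f(z)/z$. Since $f(0)=0$ and $f'(0)=1$, $p$ is analytic in $\mathbb{D}$ with $p(0)=1$, and the product rule yields the identity $f'(z)=p(z)+zp'(z)$. Substituting converts the previous subordination into
\[
p(z)+zp'(z)\prec (1+z^n)^{\lambda/n}.
\]
Applying Lemma \ref{lem_2} with $a=1$ and $h(z)=(1+z^n)^{\lambda/n}$ should then give
\[
p(z)\prec \frac{1}{z}\int_{0}^{z}(1+t^n)^{\lambda/n}\,\mathrm{d}t,
\]
which by \eqref{EQ_G} (taken with $m=n$) equals $\mathbf{G}_{F_{\lambda/n,n}}(z)/z$, yielding \eqref{eq_lem_f_z}; the specialisation $n=1$ delivers \eqref{eq_lem_f_z_1} through \eqref{EQ_G_1}.

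The one delicate point, and the main obstacle, is verifying that the function $h(z)=(1+z^n)^{\lambda/n}$ meets the convexity hypothesis of Lemma \ref{lem_2}. For $n=1$ this is literal: $\mathfrak{q}_\lambda(z)=(1+z)^{\lambda}$ is convex univalent in $\mathbb{D}$, as recorded just before \eqref{eq_expand_q}, so the lemma applies directly and gives \eqref{eq_lem_f_z_1} at once. For $n\ge 2$, $h$ fails to be univalent because $h'$ vanishes at the origin with multiplicity $n-1$; however, since $z\mapsto z^n$ maps $\mathbb{D}$ onto $\mathbb{D}$ and $w\mapsto (1+w)^{\lambda/n}$ is convex univalent for $0<\lambda/n\le 1$, the image $h(\mathbb{D})$ coincides with a convex domain containing $1$, so Lemma \ref{lem_2} still applies in the broader Miller--Mocanu form that accommodates analytic functions whose image is convex. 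Once this observation is in place, the chain of implications outlined above closes immediately.
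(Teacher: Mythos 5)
You follow exactly the paper's route: the same auxiliary function $p(z)=f(z)/z$, the same identity $p(z)+zp'(z)=f'(z)$, Corollary \ref{cor_G_lambda} to get $f'\prec (1+z^n)^{\lambda/n}$, and Lemma \ref{lem_2} to integrate the subordination. For $n=1$ your argument is complete and coincides with the paper's proof, since $(1+z)^{\lambda}$ is convex univalent and Lemma \ref{lem_2} applies verbatim; you are also right that this is the only unproblematic case.

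The genuine gap is your patch for $n\ge 2$. The ``broader Miller--Mocanu form that accommodates analytic functions whose image is convex'' does not exist: in \cite{MM}, ``convex'' means \emph{univalent} with convex image, and univalence cannot be traded for image-convexity here. If you replace $h(z)=(1+z^n)^{\lambda/n}$ by the Riemann map $H(z)=(1+z)^{\lambda/n}$ of the same convex domain, Lemma \ref{lem_2} legitimately yields
\[
\frac{f(z)}{z}\prec\frac{1}{z}\int_{0}^{z}(1+t)^{\lambda/n}\,\mathrm{d}t,
\]
which is a \emph{different} dominant from the one in \eqref{eq_lem_f_z}; no image-containment argument can upgrade this to \eqref{eq_lem_f_z}, because subordination to the non-univalent dominant $Q(z):=\frac{1}{z}\int_{0}^{z}(1+t^{n})^{\lambda/n}\,\mathrm{d}t$ carries extra structure. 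Indeed $Q(z)=q(z^{n})$, where $q(w)=\frac{1}{n}w^{-1/n}\int_{0}^{w}(1+u)^{\lambda/n}u^{1/n-1}\,\mathrm{d}u$ is convex univalent (the best dominant in the full statement of \cite[Theorem 3.1b]{MM}) and takes the value $1$ only at $w=0$; consequently $f/z\prec Q$ would force every zero of $f(z)/z-1$ in $\mathbb{D}$ to have multiplicity divisible by $n$. This fails for genuine members of $\mathcal{G}(\lambda)$: for $0<r<1$ and $\epsilon>0$ small, the function $f(z)=z+\epsilon\left(\frac{z^{4}}{4}-\frac{rz^{3}}{4}\right)$ satisfies $\Re\left(1+zf''(z)/f'(z)\right)<1+\lambda/2$ (since $zf''/f'\to 0$ uniformly as $\epsilon\to 0$), has $f'(z)-1=\epsilon z^{2}\left(z-\frac{3r}{4}\right)$ vanishing to order exactly $2$ at the origin, so Corollary \ref{cor_G_lambda} produces $n=2$; yet $f(z)/z-1=\frac{\epsilon}{4}z^{2}(z-r)$ has a \emph{simple} zero at $z=r$, whence $f/z\not\prec Q$. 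So the $n\ge2$ case cannot be salvaged along these lines at all. In fairness, the paper's own proof commits the identical error silently---it applies Lemma \ref{lem_2} to the non-univalent $h$ without comment---so what both arguments actually establish is only the $n=1$ assertion \eqref{eq_lem_f_z_1}; your write-up at least makes the obstruction visible, but the proposed fix does not close it.
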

\begin{proof} Let $h(z)=f(z)/z$. Then $h(z)+zh'(z)=f'(z)$. Application of the corollary \ref{cor_G_lambda} enables us to write
	\begin{equation}\label{}
	h(z)+zh'(z)=f'(z)\prec (1+z^n)^{\lambda/n}\quad \left(n\ge1,\ z\in \mathbb{D} \right),
	\end{equation}
and use of Lemma \ref{lem_2} to the above relation, gives
	\[
	h(z)=\dfrac{f(z)}{z}\prec \dfrac{1}{z}\int_{0}^{z}(1+t^n)^{\lambda/n}\, \mathrm{d}t.
	\]
\end{proof}
\begin{thm}\label{mathbb{H}_F_lambda_n}
	The function $\frac{\mathbf{G}_{F_{\lambda/n,n}}(z)}{z}$ is  convex univalent in $\mathbb{D}$ for each
	$0<\uplambda\le1$ and $n\ge 1$  (not normalized in the usual sense).
	Especially $\frac{\mathbf{G}_{F_{\lambda}}(z)}{z}$ is  convex univalent in $\mathbb{D}$, and   
	\[
	\frac{2}{\lambda}\left(\frac{\mathbf{G}_{F_{\lambda}}(z)}{z}-1\right)
	\in \mathcal{CV}.\]
\end{thm}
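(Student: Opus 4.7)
The plan is to reduce the convexity of $\mathbf{G}_{F_{\lambda/n,n}}(z)/z$ to Libera's theorem (Lemma~\ref{Libera}) applied to the normalised form of $\mathfrak{q}_{\lambda/n}$, which is already known to be convex univalent in $\mathbb{D}$. I would first dispatch the distinguished case $n=1$, since it carries all the ideas and simultaneously yields the second assertion about $(2/\lambda)(\mathbf{G}_{F_\lambda}(z)/z - 1) \in \mathcal{CV}$.

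For $n=1$, I would write $\mathfrak{q}_\lambda(z) = 1 + \lambda\,\psi(z)$ with $\psi(z) := \lambda^{-1}\bigl((1+z)^\lambda - 1\bigr)$. Since convexity of the image is invariant under complex affine maps and $\psi$ satisfies $\psi(0) = 0$ and $\psi'(0) = 1$, one has $\psi \in \mathcal{CV}$. Applying Lemma~\ref{Libera} then gives that
\[\chi(z) := \frac{1}{z}\int_0^z \psi(t)\,\mathrm{d}t\]
is convex univalent in $\mathbb{D}$, with $\chi'(0)=1/2$. A direct substitution identifies $\chi(z) = \lambda^{-1}\bigl(\mathbf{G}_{F_\lambda}(z)/z - 1\bigr)$, so $\mathbf{G}_{F_\lambda}(z)/z = 1 + \lambda\,\chi(z)$ is a complex affine image of a convex univalent function, hence itself convex univalent. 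Scaling by $2$ gives $(2/\lambda)(\mathbf{G}_{F_\lambda}(z)/z - 1) = 2\chi(z)$; since $(2\chi)(0)=0$ and $(2\chi)'(0)=1$, this function lies in $\mathcal{CV}$, which is exactly the second assertion of the theorem.

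For the general case $n\ge 1$, I would run the same scheme with parameter $\lambda/n$ in place of $\lambda$: $\mathfrak{q}_{\lambda/n}$ is convex univalent by the hypothesis $0<\lambda/n\le 1$, and the integrand $(1+t^n)^{\lambda/n}$ shares its image with $(1+w)^{\lambda/n}$, since both equal $\mathfrak{q}_{\lambda/n}(\mathbb{D})$. Feeding the normalised version $\lambda^{-1}n\bigl(\mathfrak{q}_{\lambda/n}(\cdot)-1\bigr) \in \mathcal{CV}$ through Libera and transferring back to the variable $w = z^n$ gives the desired convexity of $\mathbf{G}_{F_{\lambda/n,n}}(z)/z$.

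The main delicate point — which I expect to be the principal obstacle — is that $\mathbf{G}_{F_{\lambda/n,n}}(z)/z$ is a power series in $z^n$ only, so it is $n$-to-one on $\mathbb{D}$ and literally not injective for $n\ge 2$. Thus \emph{convex univalent} in the statement must be interpreted geometrically, as the assertion that the image $\mathbf{G}_{F_{\lambda/n,n}}(\mathbb{D})/z$ is a convex subset of $\mathbb{C}$; this is consistent with the parenthetical remark ``not normalized in the usual sense'' and is exactly what the Libera-type argument delivers at the level of $w=z^n$.
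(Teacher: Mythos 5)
Your treatment of the case $n=1$ is correct and is in substance the paper's own proof: Lemma~\ref{Libera} applied to the affine renormalization $\psi(z)=\lambda^{-1}\left((1+z)^{\lambda}-1\right)\in\mathcal{CV}$, followed by the identification $\chi=\lambda^{-1}\left(\mathbf{G}_{F_{\lambda}}(z)/z-1\right)$; your conclusion $(2/\lambda)\left(\mathbf{G}_{F_{\lambda}}(z)/z-1\right)=2\chi\in\mathcal{CV}$ is exactly right, and you are more careful about normalization than the paper is. Your closing observation is also correct and important: for $n\ge2$ the function $\mathbf{G}_{F_{\lambda/n,n}}(z)/z$ is a power series in $z^{n}$, hence $n$-to-one, so ``convex univalent'' can only mean that the image is a convex domain. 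The paper's own one-line proof glosses over precisely this point: it asserts that $(1+z^{n})^{\lambda/n}$ is ``univalent and convex'' (false for $n\ge2$, when this function is $n$-to-one and its derivative vanishes at the origin) and feeds it into Lemma~\ref{Libera}, whose hypotheses it does not satisfy.

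However, your mechanism for $n\ge2$ has a genuine gap: the Libera average does not commute with the substitution $w=z^{n}$. Applying Lemma~\ref{Libera} to $\psi_{n}(w)=(n/\lambda)\bigl((1+w)^{\lambda/n}-1\bigr)$ controls the function $(1/w)\int_{0}^{w}(1+u)^{\lambda/n}\,\mathrm{d}u$, whereas the theorem concerns
\[
\frac{\mathbf{G}_{F_{\lambda/n,n}}(z)}{z}=\frac{1}{z}\int_{0}^{z}(1+t^{n})^{\lambda/n}\,\mathrm{d}t
=\frac{1}{n}\,w^{-1/n}\int_{0}^{w}u^{\frac{1}{n}-1}(1+u)^{\lambda/n}\,\mathrm{d}u \qquad \left(w=z^{n}\right),
\]
since the substitution $u=t^{n}$ brings in the weight $u^{1/n-1}$. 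At the level of coefficients: if $(1+w)^{\lambda/n}=\sum_{k}c_{k}w^{k}$, your route yields $\sum_{k}\frac{c_{k}}{k+1}z^{nk}$, while the theorem is about $\sum_{k}\frac{c_{k}}{nk+1}z^{nk}$; these are different functions, and convexity of the first says nothing about the second. Nor can the remark that $(1+t^{n})^{\lambda/n}$ ``shares its image'' with $(1+w)^{\lambda/n}$ repair this: the operator $f\mapsto(1/z)\int_{0}^{z}f(t)\,\mathrm{d}t$ depends on the parametrization of the integrand, not only on its image (compare $t$ and $t^{2}$, which have the same image over $\mathbb{D}$ while their averages $z/2$ and $z^{2}/3$ do not). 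The displayed identity shows what the correct tool is for $n\ge2$: not Lemma~\ref{Libera} (the case $c=1$) but the Bernardi--Libera operator $f\mapsto\frac{1+c}{w^{c}}\int_{0}^{w}u^{c-1}f(u)\,\mathrm{d}u$ with $c=1/n>0$, which also maps $\mathcal{CV}$ into $\mathcal{CV}$ (see \cite{MM}). Writing $\Psi_{n}$ for its value on $\psi_{n}$, one gets $\mathbf{G}_{F_{\lambda/n,n}}(z)/z=1+\frac{\lambda}{n(n+1)}\Psi_{n}(z^{n})$, whose image $1+\frac{\lambda}{n(n+1)}\Psi_{n}(\mathbb{D})$ is convex; replacing your transfer step by this argument closes the gap (and, incidentally, repairs the paper's proof as well).
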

\begin{proof}
The function  $\left(1+z^n\right)^{\lambda/n}$ for each $0<\lambda\le 1$ and $n\ge 1$  is univalent and  convex. Taking into account Lemma \ref{Libera}, the functions   $\frac{\mathbf{G}_{F_{\lambda/n,n}}(z)}{z}$ are convex univalent in $\mathbb{D}$. The image of the unit disk by  $\frac{\mathbf{G}_{F_{\lambda}}(z)}{z}$ and  $\log\frac{\mathbf{G}_{F_{\lambda}}(z)}{z}$ are represented in the figure Fig. \ref{log_G}.
\end{proof}
\begin{figure}[!h]
\centering
\subfloat[   $\frac{\mathbf{G}_{F_{\lambda}}(z)}{z}$]{%
\includegraphics[width=0.4\textwidth]{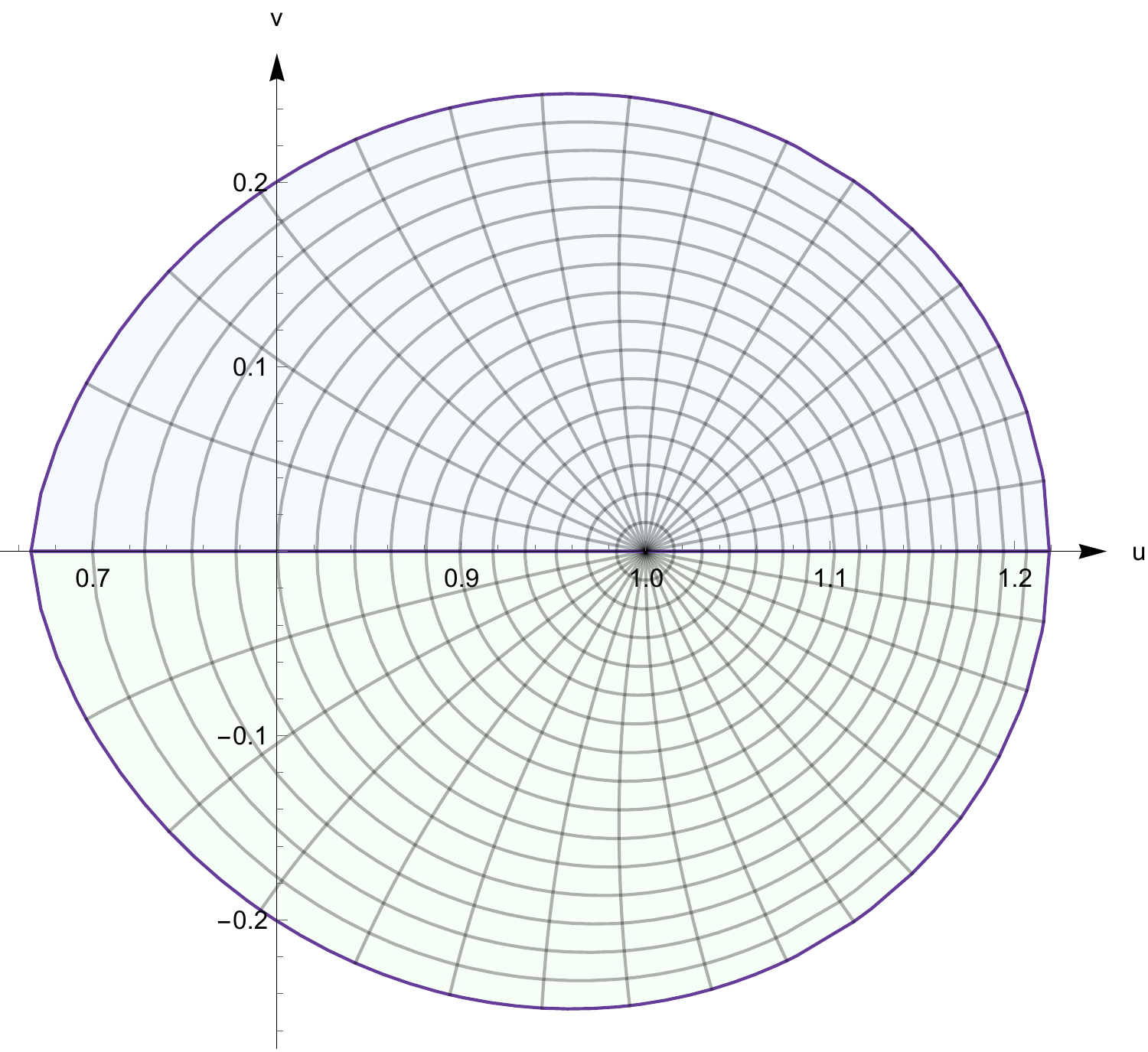}}%
\hfill
\subfloat[$\log\frac{\mathbf{G}_{F_{\lambda}}(z)}{z}$ ]{%
\includegraphics[width=0.43 \textwidth]{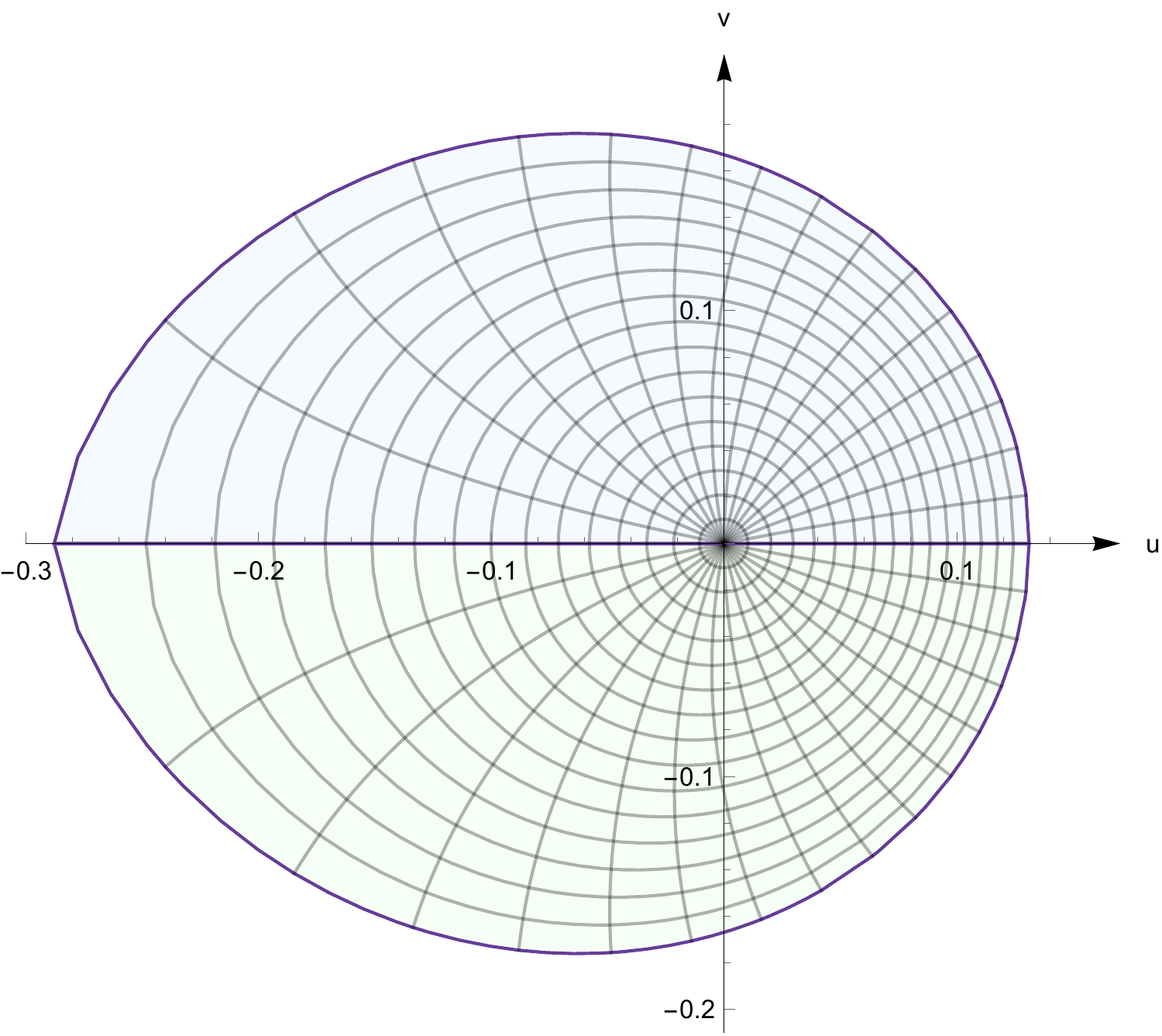}}%
\caption{The image of $\frac{\mathbf{G}_{F_{\lambda}}(z)}{z}$ and $\log\frac{\mathbf{G}_{F_{\lambda}}(z)}{z} \ (\lambda=1/2)$.}\label{log_G}
\end{figure}
Applying Corollary \ref{cor_N_lambda}, we immediately obtain.
\begin{thm}
	Let $f\in \mathcal{N}(\lambda)$ given by \eqref{eq_expand_f}. Then for $|z|=r<1$ the following inequalities hold true
	\begin{enumerate}
		\item[(i)]
		$
		- \mathbf{N}_{F_{\lambda}}(-r)=r(1-r)^\lambda\le \left|f(z)\right|\le r(1+r)^\lambda= \mathbf{N}_{F_{\lambda}}(r),
		$
		\item[(ii)]
		$
		\left|{\rm arg}\left\{\dfrac{f(z)}{z}\right\}\right|\le \lambda \sin^{-1} r.
		$
	\end{enumerate}
	Equalities hold for the function $f(z)= \mathbf{N}_{F_{\lambda}}(z)$ given by \eqref{EQ_N} or its rotation.
\end{thm}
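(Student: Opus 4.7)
The plan is to invoke Corollary \ref{cor_N_lambda} to obtain an initial subordination, upgrade it to subordination to the simpler target $\mathfrak{q}_\lambda(z)=(1+z)^\lambda$, and then read off both bounds from the standard subordination principle.

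First, by Corollary \ref{cor_N_lambda} there exists $n\ge1$ with $f(z)/z\prec(1+z^n)^{\lambda/n}=\mathfrak{q}_{\lambda/n}(z^n)$, whose image is the interior of the sinusoidal spiral $\mathbb{SS}(\lambda/n)$. The inclusion \eqref{inclusion} of the classes $\mathcal{ST}_{ss}$ reflects the nesting $\mathfrak{q}_{\lambda/n}(\mathbb{D})\subset\mathfrak{q}_\lambda(\mathbb{D})$ of the corresponding regions. Since $\mathfrak{q}_\lambda$ is (convex) univalent in $\mathbb{D}$ with $\mathfrak{q}_\lambda(0)=1=(f(z)/z)|_{z=0}$, the envelope characterization of subordination with a univalent target upgrades the subordination to
\[
\frac{f(z)}{z}\prec (1+z)^\lambda\qquad(z\in\mathbb{D}).
\]

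Next, by the subordination principle, for $|z|=r<1$ the value $f(z)/z$ lies in the image of the closed disk $\overline{\mathbb{D}}_r$ under $(1+w)^\lambda$. Since $(1+w)^\lambda$ is nonvanishing and holomorphic on $\overline{\mathbb{D}}$ in the principal branch, both it and its reciprocal obey the maximum modulus principle, so the extrema of $|(1+w)^\lambda|=|1+w|^\lambda$ over $\overline{\mathbb{D}}_r$ are attained on $|w|=r$, a one-variable check localizing them at $w=\pm r$ and giving $(1-r)^\lambda\le|f(z)/z|\le(1+r)^\lambda$; multiplication by $r$ yields (i). For (ii), the principal branch gives $\arg(1+w)^\lambda=\lambda\arg(1+w)$, and the tangent-line geometry to the closed disk of radius $r$ centered at $1$ (whose center sits at distance $1$ from the origin) produces $\max_{|w|\le r}|\arg(1+w)|=\arcsin r$, so $|\arg(f(z)/z)|\le\lambda\arcsin r$.

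Equality is verified on the extremal function $\mathbf{N}_{F_\lambda}(z)=z(1+z)^\lambda$: the modulus bounds in (i) are attained at $z=\pm r$, the argument bound in (ii) is attained at $z=re^{i\theta_0}$ with $\cos\theta_0=-r$, and the rotations $e^{-i\alpha}\mathbf{N}_{F_\lambda}(e^{i\alpha}z)$ exhaust all extremal directions. The main delicacy is the upgrade of the subordination from $(1+z^n)^{\lambda/n}$ to $(1+z)^\lambda$, which relies on the univalence of $\mathfrak{q}_\lambda$ together with the region nesting implicit in \eqref{inclusion}; once this is in place, the rest is a routine application of subordination, maximum modulus, and elementary tangent-line geometry.
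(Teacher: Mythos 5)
Your proof is correct and follows essentially the same route as the paper: the paper deduces this theorem ``immediately'' from Corollary \ref{cor_N_lambda}, exactly as you do, with the growth and rotation bounds then read off from the subordination of $f(z)/z$ to $\mathfrak{q}_{\lambda}$ via the standard subordination principle. Your explicit upgrade of $f(z)/z\prec\myp{1+z^{n}}^{\lambda/n}$ to $f(z)/z\prec\myp{1+z}^{\lambda}$, using the region nesting $\mathfrak{q}_{\lambda/n}(\mathbb{D})\subset\mathfrak{q}_{\lambda}(\mathbb{D})$ behind \eqref{inclusion} together with the univalence of $\mathfrak{q}_{\lambda}$, is a legitimate and welcome filling-in of a step the paper leaves implicit (one could equally well estimate for each $n$ directly and note the $n=1$ case is extremal).
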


\begin{thm}
	Let $f\in \mathcal{G}(\lambda)$ given by \eqref{eq_expand_f}. Then for $|z|=r<1$ it holds
	\begin{enumerate}
		\item[(i)]
		$\mathfrak{q}_{\lambda}(-r)= (1-r)^\lambda\le \left|f'(z)\right|\le (1+r)^\lambda=\mathfrak{q}_{\lambda}(r)$,
		\item[(ii)]
		$\left|{\rm arg}\{f'(z)\}\right|\le \lambda \sin^{-1} r$,
		\item[(iii)]
		$-\mathbf{G}_{F_{\lambda}}(-r)=\dfrac{1-(1-r)^{\lambda+1}}{1+\lambda}\le \left|f(z)\right|\le \dfrac{(1+r)^{\lambda+1}-1}{1+\lambda}=\mathbf{G}_{F_{\lambda}}(r).$
	\end{enumerate}
	The above inequalities are sharp; equalities hold for $f(z)=\mathbf{G}_{F_\lambda}(z)$ given by \eqref{EQ_G_1} or its rotation.
\end{thm}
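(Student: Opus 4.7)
\emph{Strategy and Step 1.} I plan to reduce all three estimates to the single subordination $f'\prec\mathfrak{q}_\lambda(z)=(1+z)^\lambda$, from which (i)--(iii) follow from the representation $f'(z)=(1+\omega(z))^\lambda$ with $\omega$ a Schwarz function. The subordination is the $n=1$ specialization of the proof of Theorem~\ref{p_q}, applied with $p(z)=f'(z)$ and $q(z)=(1+z)^\lambda$: assuming $p\not\prec q$, Lemma~\ref{lem_1} yields $z_0\in\mathbb{D}$, $\zeta_0\in\partial\mathbb{D}\setminus\{-1\}$ and $m\ge 1$ with $z_0p'(z_0)=m\zeta_0q'(\zeta_0)$, and since $\zeta q'(\zeta)/q(\zeta)=\lambda\zeta/(1+\zeta)$ has real part $\lambda/2$ on $\partial\mathbb{D}\setminus\{-1\}$, this forces $\Re(z_0p'(z_0)/p(z_0))=m\lambda/2\ge\lambda/2$, contradicting the defining inequality $\Re(zf''(z)/f'(z))<\lambda/2$ of $\mathcal{G}(\lambda)$.

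\emph{Parts (i), (ii) and the upper bound in (iii).} From $|1+\omega(z)|\in[1-r,1+r]$ and the monotonicity of $t\mapsto t^\lambda$ on $(0,\infty)$, part (i) is immediate. For (ii), $\arg f'(z)=\lambda\arg(1+\omega(z))$ and, geometrically, $|\arg(1+w)|$ is maximised over $\{|w|\le r\}$ at the tangent points from $0$ to the disk of radius $r$ centred at $1$, with maximal value $\sin^{-1}r$. The upper bound in (iii) follows by integrating $f'$ along the straight ray from $0$ to $z=re^{i\theta}$:
\[
|f(z)|\le\int_0^r|f'(te^{i\theta})|\,dt\le\int_0^r(1+t)^\lambda\,dt=\frac{(1+r)^{\lambda+1}-1}{\lambda+1}.
\]

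\emph{Lower bound in (iii), sharpness, and main obstacle.} For the lower bound I use that $f$ is univalent, since $\mathcal{G}(\lambda)\subset\mathcal{ST}$ as recalled after Corollary~\ref{cor_condition_uni}. Picking $z_0$ with $|z_0|=r$ attaining $\min_{|z|=r}|f(z)|$, the minimum modulus principle gives $\{|w|<|f(z_0)|\}\subset f(\{|z|<r\})$, so $[0,f(z_0)]$ pulls back under $f^{-1}$ to a rectifiable curve $C\subset\{|z|\le r\}$ from $0$ to $z_0$, along which
\[
|f(z_0)|=\int_C|f'(z)|\,|dz|\ge\int_C(1-|z|)^\lambda\,|dz|\ge\int_0^r(1-s)^\lambda\,ds=\frac{1-(1-r)^{\lambda+1}}{\lambda+1};
\]
the last step uses $|dz|\ge\bigl|d|z|\bigr|$ together with the intermediate value theorem applied to $t\mapsto|z(t)|$. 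Sharpness is then clear for $f=\mathbf{G}_{F_\lambda}$, whose derivative is exactly $(1+z)^\lambda$, producing equality at $z=\pm r$ in every item. The main obstacle is precisely this lower-bound step: it requires the univalence of $f$ and a careful projection of the arc-length integral onto the radial variable, whereas (i), (ii) and the upper half of (iii) are soft consequences of the subordination alone.
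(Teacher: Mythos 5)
Your proposal is correct, but it diverges from the paper in part (iii). For (i) and (ii) you and the paper do essentially the same thing: the bounds come from the subordination $f'\prec(1+z)^\lambda$ --- the paper quotes Corollary \ref{cor_G_lambda}, you re-run the Miller--Mocanu clash argument of Theorem \ref{p_q} with $n=1$ --- followed by the Schwarz-function estimates $|1+\omega(z)|\in[1-r,1+r]$ and $|\arg(1+\omega(z))|\le\sin^{-1}r$. The genuine difference is (iii). The paper obtains both bounds at once from Theorem \ref{lem_f_z}: applying the differential-subordination Lemma \ref{lem_2} to $h(z)=f(z)/z$, with $h(z)+zh'(z)=f'(z)\prec(1+z)^\lambda$, gives $f(z)/z\prec\mathbf{G}_{F_\lambda}(z)/z$, and since this majorant is convex univalent with real coefficients, the image of $|z|\le r$ under it has extreme moduli at the real points $\mp r$; no univalence of $f$ enters anywhere. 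You instead prove the upper bound by radial integration of $|f'|$ and the lower bound by the classical Koebe-type argument (minimum-modulus point $z_0$, inclusion $\{|w|<|f(z_0)|\}\subset f(\{|z|<r\})$, pull-back of the segment $[0,f(z_0)]$, projection of arc length onto $|z|$). That route is valid, but it consumes the nontrivial external fact that $f\in\mathcal{G}(\lambda)$ is univalent, plus the careful $|dz|\ge\bigl|d|z|\bigr|$ projection step, both of which the paper's subordination argument avoids; what it buys is a self-contained, classical proof of (iii) that does not depend on Lemma \ref{lem_2}. Two small inaccuracies you should repair: Lemma \ref{lem_1} requires $p\not\equiv 1$, so the trivial case $f(z)=z$ must be dismissed separately (all three inequalities hold for it trivially); and your sharpness claim of ``equality at $z=\pm r$ in every item'' fails for (ii), since $\arg \mathbf{G}_{F_\lambda}'(\pm r)=0$ --- equality $|\arg f'(z)|=\lambda\sin^{-1}r$ for $f=\mathbf{G}_{F_\lambda}$ is attained at the tangency points $z=-r^2\pm\mathrm{i}\,r\sqrt{1-r^2}$, not on the real axis.
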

\begin{proof}
	Proofs of ${\rm (i)}$ and ${\rm (ii)}$ easily follow from corollary \ref{cor_G_lambda} and ${\rm(iii)}$
	follows from Theorem \ref{lem_f_z}.
\end{proof}
\section{Logarithmic coefficients in the  class $\mathcal{G}(\lambda)$}\label{sec_log}
Based on the results of the previous section we obtain a sharp bounds of logarithmic coefficient in the  class $\mathcal{G}(\lambda)$.
\begin{thm}\label{thm_log_coff}
Let $f\in\mathcal{ST}_{ss}(\lambda)$. Then the logarithmic coefficients of $f$ satisfy
\begin{eqnarray}
    \left|\gamma_n(f)\right|&\le& \dfrac{\lambda}{2n} \quad \myp{n\geq 1},\label{eq1_thm_log_coff}\\
         \sum_{n=1}^{\infty}n^ 2  \left|\gamma_n(f)\right|^2 &\le & \frac14\sum_{n=1}^{\infty}\left|B_n\right|^2,\label{eq2_thm_log_coff}\\
    \sum_{n=1}^{\infty}  \left|\gamma_n(f)\right|^2 &\le & \frac14\sum_{n=1}^{\infty}\frac{\left|B_n\right|^2}{n^2}\le \frac{\lambda^2 \pi^2}{24}.\label{eq3_thm_log_coff}
    \end{eqnarray}
 All the inequalities are sharp.
\end{thm}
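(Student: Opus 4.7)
The approach is to translate the defining subordination $zf'(z)/f(z) \prec \mathfrak{q}_\lambda(z)$ into subordinations involving the generating function of the $\gamma_n(f)$, and then apply standard tools (Rogosinski's coefficient bound for convex subordinants, Littlewood's subordination principle, and Lemma \ref{lem_3}). The starting observation is that differentiating $\log(f(z)/z)=\sum 2\gamma_n(f)z^n$ and multiplying by $z$ yields the identity $zf'(z)/f(z)-1=\sum 2n\gamma_n(f)z^n$, so membership in $\mathcal{ST}_{ss}(\lambda)$ is equivalent to
\[
\sum_{n\ge 1} 2n\gamma_n(f)\, z^n \;\prec\; \mathfrak{q}_\lambda(z)-1 \;=\; \sum_{n\ge 1} B_n z^n.
\]
Because $\mathfrak{q}_\lambda$ is convex univalent (Section \ref{fund_prop}), Rogosinski's classical coefficient theorem gives $|2n\gamma_n(f)|\le |B_1|=\lambda$, which is \eqref{eq1_thm_log_coff}. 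Littlewood's subordination theorem applied to the same subordination, combined with Parseval on $|z|=r\to 1^-$, yields $\sum_n |2n\gamma_n(f)|^2\le \sum_n |B_n|^2$; dividing by $4$ gives \eqref{eq2_thm_log_coff}.

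For \eqref{eq3_thm_log_coff} the plan is to upgrade the subordination through Lemma \ref{lem_3}. Taking $p(z)=f(z)/z$, one has $zp'(z)/p(z)=zf'(z)/f(z)-1\prec \mathfrak{q}_\lambda(z)-1=:h(z)$, and $h$ is starlike since it is convex with $h(0)=0$. Lemma \ref{lem_3} (with $a=1$, $n=1$) then produces
\[
\frac{f(z)}{z}\;\prec\;\exp\!\left(\int_0^z \frac{\mathfrak{q}_\lambda(t)-1}{t}\,\mathrm{d}t\right)\;=\;\exp(H(z)),\qquad H(z)=\sum_{n\ge 1}\frac{B_n}{n}z^n.
\]
Since $\exp(H)$ is zero-free in $\mathbb{D}$ and $\log\exp(H)=H$, the Schwarz function that witnesses $f/z\prec\exp(H)$ also witnesses $\log(f(z)/z)\prec H(z)$. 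A second Parseval step then yields $4\sum_n|\gamma_n(f)|^2\le \sum_n|B_n|^2/n^2$, which is the first inequality in \eqref{eq3_thm_log_coff}. The final estimate $\le \lambda^2\pi^2/24$ will follow from the elementary bound $|B_n|\le B_1=\lambda$, valid for all $n\ge 1$ and $\lambda\in(0,1]$ by induction from $|B_n/B_{n-1}|=(n-1-\lambda)/n\le 1$ for $n\ge 2$, combined with $\sum 1/n^2=\pi^2/6$.

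Sharpness will be read off the explicit extremal functions from Section \ref{fund_prop}. For \eqref{eq1_thm_log_coff} the identity $\gamma_n(F_{\lambda,n})=\lambda/(2n)$ recorded there gives equality at each fixed index $n$; for \eqref{eq2_thm_log_coff} and the first inequality of \eqref{eq3_thm_log_coff} the extremal function is $F_\lambda=F_{\lambda,1}$, for which $\log(F_\lambda(z)/z)=H(z)$ gives $\gamma_k(F_\lambda)=B_k/(2k)$, turning both weighted Parseval inequalities into equalities simultaneously. The main obstacle I anticipate is not any of the three inequalities individually --- each is a short application of a known subordination tool --- but the logarithmic passage from $f/z\prec\exp(H)$ to $\log(f/z)\prec H$ used for \eqref{eq3_thm_log_coff}: this rests on $\exp(H)$ being zero-free so that a single-valued branch of logarithm exists and coincides with the Taylor series definition $\log(f(z)/z)=\sum 2\gamma_n(f)z^n$ at the origin, which is automatic but worth checking explicitly to close the argument.
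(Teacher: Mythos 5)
Your proposal is correct and follows essentially the same route as the paper: the same rewriting of the defining subordination as $\sum_{n\ge1} 2n\gamma_n(f)z^n \prec \mathfrak{q}_{\lambda}(z)-1$ with Rogosinski's coefficient bound for the convex majorant to get \eqref{eq1_thm_log_coff}, and the same application of Lemma \ref{lem_3} to $p(z)=f(z)/z$ yielding $\log(f(z)/z)\prec \sum_{n\ge1} B_n z^n/n$ for \eqref{eq3_thm_log_coff}, with the same extremal functions $F_{\lambda,n}$ and $F_{\lambda}$. The only cosmetic difference is that you invoke Littlewood's subordination theorem plus Parseval for the two quadratic inequalities where the paper cites Rogosinski's partial-sum theorem (the same classical result in another guise); your explicit treatment of the logarithm branch in passing from $f/z\prec\exp(H)$ to $\log(f/z)\prec H$, and of the bound $|B_n|\le\lambda$, fills in details the paper leaves implicit.
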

\begin{proof}
Let $f\in \mathcal{ST}_{ss}(\lambda)$.  Then the relation \eqref{eq_class_st} can be rewritten  as

\begin{equation}\label{logf/z}
    z\myp{\log \dfrac{f(z)}{z}}' \prec \mathfrak{q}_{\lambda}(z)-1\quad \myp{z\in \mathbb{D}}
\end{equation}
which, in terms of the coefficients, is equivalent to
\begin{equation}\label{eq_thm_log_coff_1}
    \sum_{n=1}^{\infty}2n\gamma_n(f) z^n\prec \sum_{n=1}^{\infty}B_n  z^n.
\end{equation}
By Rogosinski result for coefficients of subordinate functions \cite{Rog}, we obtain $2n\left|\gamma_n(f)\right|\leq
|B_1|=\lambda$ from which \eqref{logf/z} follows. The result is sharp for $F_{\lambda,n}(z)$ given by \eqref{F_n} for that we set $zF'_{\lambda,n}(z)/F_{\lambda,n}(z)=  \mathfrak{q}_{\lambda}(z^n)$.

In order to prove \eqref{eq2_thm_log_coff}, let $f\in \mathcal{ST}_{ss}(\lambda)$. Applying repeatadly Rogosinski’s theorem \cite{Rog} on the  subordination relation, and \eqref{eq_thm_log_coff_1} we obtain
\[
4\sum_{m=1}^{N} n^2 |\gamma_{n}(f)|^2 \le \sum_{n=1}^{N}  \left|B_n\right|^2\le \sum_{n=1}^{\infty}\left|B_n\right|^2,
\]
which proves the assertion \eqref{eq2_thm_log_coff} if we allow $N\to \infty$. The inequality \eqref{eq2_thm_log_coff} is sharp for the function $f(z)=\overline{\mu}F_{\lambda,n}(\mu z)$, given by \eqref{F_n}, where $\mu$ is a unimodular complex number.

Denote now  $g(z) := f (z)/z$. Function $g$ is  analytic in $\mathbb{D}$ and such that $g(0) = 1$. Moreover, by \eqref{logf/z}, $g$ satisfies the relation
\[
\frac{z g^{\prime}(z)}{g(z)}= z\myp{\log \dfrac{f(z)}{z}}'=\frac{z f^{\prime}(z)}{f(z)} -1\prec \mathfrak{q}_{\lambda}(z)-1 \quad \left(z \in\mathbb{D}\right).
\]
Since $\mathfrak{q}_{\lambda}(z)-1$ is convex in $\mathbb{D}$ and $\mathfrak{q}_{\lambda}(0)-1=0$ then, using Lemma \ref{lem_3} we conclude
\[
g(z) = \frac{f(z)}{z}\prec \exp\left(\int_{0}^{z}\frac{\mathfrak{q}_{\lambda}(t)-1}{t}\, \mathrm{d}t \right).
\]
Then $\log (f(z)/z)\prec \int_{0}^{z}[(\mathfrak{q}_{\lambda}(t)-1)/t]\, dt$, that is  equivalent to
\begin{equation}\label{eq5_thm_log_coff}
\sum_{n=1}^{\infty}2\gamma_n(f) z^n \prec \int_{0}^{z}\frac{\mathfrak{q}_{\lambda}(t)-1}{t}\, \mathrm{d}t=\sum_{n=1}^{\infty}\frac{B_n}{n}z^n.
\end{equation}
Application of the Rogosinski theorem \cite{Rog}  gives
\[
4\sum_{n=1}^{N}  |\gamma_{n}(f)|^2 \prec \sum_{n=1}^{N}  \frac{\left|B_n\right|^2}{n^2}\le \sum_{n=1}^{\infty}  \frac{\left|B_n\right|^2}{n^2},
\]
which proves the desired assertion \eqref{eq3_thm_log_coff} when $N\to \infty$. The inequality \eqref{eq3_thm_log_coff} is sharp for the function $f(z)=\overline{\mu}F_{\lambda}(\mu z)$ given by \eqref{F_1}, where $\mu$ is a unimodular complex number.
\end{proof}
Application of  the above theorem with $\lambda/m$ instead of $\lambda$ leads to the following. 
\begin{cor}\label{cor_log_coff_n}
The logarithmic coefficients of $f\in\mathcal{ST}_{ss}(\lambda/m)$ satisfy sharp inequalities
\begin{equation*}
    \left|\gamma_n(f)\right|\leq \dfrac{\lambda}{2nm} \quad \myp{n\geq 1}.
\end{equation*}
The functions $f(z)=\overline{\mu}F_{\lambda/m,n}(\mu z)$, where $\mu$ is a unimodular complex number, realizes equality.
\end{cor}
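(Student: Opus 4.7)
The plan is to deduce this corollary as a direct consequence of Theorem \ref{thm_log_coff} by rescaling the parameter $\lambda \mapsto \lambda/m$, together with an explicit verification of sharpness via the functions $F_{\lambda/m, n}$ defined in \eqref{Fm_n}.

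First I would check that Theorem \ref{thm_log_coff} applies with parameter $\lambda/m$ in place of $\lambda$. Since $0 < \lambda/m \le 1$ whenever $0 < \lambda \le 1$ and $m \ge 1$, the class $\mathcal{ST}_{ss}(\lambda/m)$ falls into the scope of that theorem. The bound \eqref{eq1_thm_log_coff} then reads $|\gamma_n(f)| \le (\lambda/m)/(2n) = \lambda/(2nm)$, which is exactly the desired inequality. Concretely, for $f\in\mathcal{ST}_{ss}(\lambda/m)$ the subordination $zf'(z)/f(z)\prec\mathfrak{q}_{\lambda/m}(z)$ rewrites as
\[
\sum_{k=1}^{\infty} 2k\,\gamma_k(f)\, z^k \;\prec\; \mathfrak{q}_{\lambda/m}(z)-1,
\]
and Rogosinski's theorem on coefficients of subordinate functions yields $2n|\gamma_n(f)|\le |B_1(\lambda/m)| = \lambda/m$.

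For sharpness I would exhibit the extremal function. By construction, $F_{\lambda/m,n}$ satisfies $zF'_{\lambda/m,n}(z)/F_{\lambda/m,n}(z) = \mathfrak{q}_{\lambda/m}(z^n) = (1+z^n)^{\lambda/m}$, and since $z\mapsto z^n$ is a self-map of $\mathbb{D}$ fixing the origin, $\mathfrak{q}_{\lambda/m}(z^n) \prec \mathfrak{q}_{\lambda/m}(z)$; hence $F_{\lambda/m,n}\in\mathcal{ST}_{ss}(\lambda/m)$. Integrating gives
\[
\log\frac{F_{\lambda/m,n}(z)}{z} \;=\; \int_{0}^{z}\frac{(1+t^n)^{\lambda/m}-1}{t}\,\mathrm{d}t \;=\; \sum_{k=1}^{\infty}\frac{B_k(\lambda/m)}{kn}\,z^{kn},
\]
so reading off the coefficient of $z^n$ (i.e.\ $k=1$) yields $2\gamma_n(F_{\lambda/m,n}) = B_1(\lambda/m)/n = \lambda/(mn)$, that is $|\gamma_n(F_{\lambda/m,n})| = \lambda/(2nm)$. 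Invariance of the class and of the modulus of logarithmic coefficients under the rotation $f(z)\mapsto\overline{\mu}f(\mu z)$ with $|\mu|=1$ supplies the stated family of extremals.

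There is no real obstacle here; the only point one should not gloss over is that for the sharpness claim the extremal function depends on the index $n$ (a different $F_{\lambda/m,n}$ is required for each $n$), which is already reflected in the statement of the corollary.
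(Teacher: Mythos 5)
Your proposal is correct and follows essentially the same route as the paper: the corollary there is obtained precisely by applying Theorem \ref{thm_log_coff} (inequality \eqref{eq1_thm_log_coff}) with $\lambda/m$ in place of $\lambda$, a substitution that is legitimate since $0<\lambda/m\le 1$. Your explicit check that $F_{\lambda/m,n}\in\mathcal{ST}_{ss}(\lambda/m)$ and that $|\gamma_n(F_{\lambda/m,n})|=\lambda/(2nm)$ merely spells out the sharpness the paper inherits from the extremal functions of that theorem under the rotation $f(z)\mapsto\overline{\mu}f(\mu z)$.
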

\begin{thm}\label{thm_log_coff_G}
The logarithmic coefficients of $f\in\mathcal{G}(\lambda)$	satisfy sharp inequalities
	\begin{equation}\label{log_G1}
		\left|\gamma_{n}(f)\right| \le
		\dfrac{\lambda}{2n(n+1)}\quad  \myp{n= 1,2 ,3 ,\ldots}.
	\end{equation}
The inequality is sharp.
\end{thm}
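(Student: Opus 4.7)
The plan is to transfer the sharp logarithmic-coefficient bounds for $\mathcal{ST}_{ss}(\lambda/n)$ from Corollary~\ref{cor_log_coff_n} to the class $\mathcal{G}(\lambda)$ via the correspondence developed in Corollary~\ref{cor_G_lambda}. Given $f\in\mathcal{G}(\lambda)$, Corollary~\ref{cor_G_lambda} produces an integer $n\ge 1$ such that
\[
g(z):=z\exp\!\left(\int_{0}^{z}\frac{f'(t)-1}{t}\,\mathrm{d}t\right)\in\mathcal{ST}_{ss}(\lambda/n),\qquad f=\mathbf{G}_{g},
\]
and the explicit series of the transform in \eqref{eq_G_f} gives the coefficient relation $a_{k+1}(f)=\tfrac{2k}{k+1}\gamma_{k}(g)$ for $k\ge 1$, while Corollary~\ref{cor_log_coff_n} simultaneously supplies the sharp inequality $|\gamma_{k}(g)|\le \lambda/(2kn)$.

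Next, the subordination $zg'(z)/g(z)=f'(z)\prec(1+z^{n})^{\lambda/n}$ forces $g(z)=z+A_{n+1}z^{n+1}+\cdots$, so that $\gamma_{k}(g)=0$ and $\gamma_{k}(f)=0$ for $k<n$. At the first nontrivial index $k=n$, the expansion of $\log(f(z)/z)$ starts at $z^{n}$ and yields $2\gamma_{n}(f)=a_{n+1}(f)=\tfrac{2n}{n+1}\gamma_{n}(g)$; combined with $|\gamma_{n}(g)|\le \lambda/(2n^{2})$ this gives the target estimate
\[
|\gamma_{n}(f)|\le\frac{n}{n+1}\cdot\frac{\lambda}{2n^{2}}=\frac{\lambda}{2n(n+1)},
\]
and sharpness at this index follows immediately from the text's own computation $\gamma_{n}(\mathbf{G}_{F_{\lambda/n,n}})=\lambda/(2n(n+1))$.

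To finish \eqref{log_G1} for an arbitrary target index, the two easy regimes are indices below the order of $f$ (where $\gamma_{k}(f)=0$ trivially) and the index $k=n$ treated above; for indices strictly above the order one writes $\gamma_{k}(f)$ as a specific polynomial in $\gamma_{1}(g),\dots,\gamma_{k}(g)$ coming from $\log(f/z)=\log\!\bigl(1+\sum_{j\ge 1}\tfrac{2j}{j+1}\gamma_{j}(g)z^{j}\bigr)$ and pushes the joint Schwarz-coefficient constraints of $zg'(z)/g(z)\prec(1+z)^{\lambda/n}$ through that polynomial. The hard part is exactly this last step: a naive triangle inequality on the polynomial produces a bound with a factor $1/n$ in place of the required $1/(k+1)$, so one must exploit the full Schwarz expansion of the subordination, and the combinatorial control of this Schwarz polynomial is the genuine heart of the proof.
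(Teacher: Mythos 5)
Your reduction is correct as far as it goes, and it goes exactly as far as you say it does: the identity $f=\mathbf{G}_g$ with $g\in\mathcal{ST}_{ss}(\lambda/n)$, the coefficient relation $a_{k+1}(f)=\frac{2k}{k+1}\gamma_k(g)$, the vanishing $\gamma_k(f)=0$ for $k<n$, and the estimate $|\gamma_n(f)|\le\lambda/(2n(n+1))$ at the single index $k=n$ are all sound and match the paper's own computations. The problem is that for a generic $f\in\mathcal{G}(\lambda)$, i.e.\ one with $a_2\neq 0$, the integer $n$ produced by Corollary~\ref{cor_G_lambda} is necessarily $n=1$, so your "easy regimes'' collapse to the single known estimate $|\gamma_1(f)|\le\lambda/4$, and every index $k\ge 2$ lies in the case you explicitly defer. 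Your final paragraph is a description of that remaining difficulty, not an argument: as you yourself compute, the leading term of the log-expansion is bounded via Corollary~\ref{cor_log_coff_n} only by $\lambda/(n(k+1))$, which misses the target $\lambda/(k(k+1))$ whenever $k>n$, and no mechanism is offered for recovering the missing factor. Since the cases $k=1,2,3$ were already proved by Ponnusamy, Sharma and Wirths, the step you leave open is not a technical remainder; it is the entire content of the conjecture, so the proposal has a genuine gap.

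For comparison, the paper does not expand $\log(f/z)$ as a polynomial in the $\gamma_j(g)$'s at all. It converts Theorem~\ref{lem_f_z} into the subordination \eqref{EQ_Log_G2}, $\log(f(z)/z)\prec\log\bigl(\mathbf{G}_{F_{\lambda/n,n}}(z)/z\bigr)$, argues that the majorant is convex univalent (more precisely, a convex univalent function of $z^n$) whose first nonvanishing coefficient is $\lambda/(n(n+1))$, and then applies Rogosinski's theorem on coefficients of subordinate functions to bound \emph{every} coefficient of $\log(f/z)$ at once, giving $2|\gamma_m(f)|\le\lambda/(n(n+1))$ for all $m\ge 1$; that single majorization replaces the combinatorial step you could not carry out. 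You should be aware, however, that your instinct about where the difficulty sits is accurate: the paper's Rogosinski step produces a bound that is uniform in the coefficient index $m$ and depends only on the symmetry order $n$ attached to $f$; it agrees with \eqref{log_G1} at $m=n$ and is vacuously fine for $m<n$ (where $\gamma_m(f)=0$), but for every $m>n$ it is strictly weaker than the asserted $\lambda/(2m(m+1))$ and so does not by itself yield the claimed decay in $m$. In other words, your attempt and the paper's printed proof are thin at precisely the same place; the paper papers over it with a uniform Rogosinski bound, while you state it openly, but neither text contains the argument needed for the indices above the order of $f'-1$.
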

\begin{proof}
	Let $f\in \mathcal{G}(\lambda)$. Then, the relation  \eqref{eq_lem_f_z} is equivalent to
\begin{equation}\label{EQ_Log_G2}
	\log\dfrac{f(z)}{z}\prec \log\dfrac{\mathbf{G}_{F_{\lambda/n,n}}(z)}{z}\quad \myp{n\ge 1,\ z\in \mathbb{D}}.
\end{equation}
In terms of  the logarithmic coefficients  the previous relation is equivalent to
\[
\sum_{m=1}^{\infty} 2 \gamma_{m}(f) z^{m} \prec
\sum_{m=1}^{\infty} 2 \gamma_{m}\left(n,\mathbf{G}_{F_{\lambda/n,n}}\right) z^{m}=2 \gamma_{n}\left(n,\mathbf{G}_{F_{\lambda/n,n}}\right) z^{n}+\cdots\quad(z \in \mathbb{D}).
\]
A simple computation shows that  $\gamma_{n}\left(n,\mathbf{G}_{F_{\lambda/n,n}}\right)=\lambda/(2n(n+1))$.
Since  the class of convex univalent functions is closed under convolution \cite{RSS}  then, making use of Corollary \ref{cor_condition_uni} and  Theorem \ref{lem_f_z}, the function
\[
\log\frac{\mathbf{G}_{F_{\lambda/n,n}}(z)}{z}=\int_{0}^{z} \dfrac{\mathfrak{q}_{\lambda}(t^n)^{\lambda/n}-1}{t}\,\mathrm{d}t
\]
is convex and univalent in $\mathbb{D}$. Applying now Rogosinski result and \eqref{EQ_Log_G2} we obtain
\[
2\left|\gamma_m(f)\right|\le \dfrac{\lambda}{n(n+1)}\quad \myp{m=1,2, \ldots}.
\]
The above  inequality is sharp for the function $f(z)=\mathbf{G}_{F_{\lambda/n,n}}(z)$ given by \eqref{EQ_G} for which 
$2\gamma_{n}\left(n,\mathbf{G}_{F_{\lambda/n,n}}\right)=\lambda/(n(n+1))$.
\end{proof}	
We note that for $N=1,2,\ldots$
$$\sum_{n=1}^{N}\dfrac{1}{n(n+1)}=1-\dfrac{1}{N+1},$$ 
and
$$\sum_{n=1}^{N}\dfrac{1}{n^2(n+1)^2}=\sum_{n=1}^{N}\frac{1}{n^2}+\sum_{n=1}^{N}\frac{1}{(n+1)^2}+2\sum_{n=1}^{N}\left(\frac{1}{n+1}-\frac{1}{n}\right),$$
therefore as the corollaries of Theorem \ref{thm_log_coff_G}, we present the following inequalities.
\begin{cor}
The logarithmic coefficients of $f\in\mathcal{G}(\lambda)$ satisfy
\begin{eqnarray*}
  \sum_{n=1}^{\infty} \left|\gamma_n(f)\right|&\le& \dfrac{\lambda}{2},\\
         \sum_{n=1}^{\infty}n^ 2  \left|\gamma_n(f)\right|^2 &\le & \frac{\lambda^2}{24}\left(\pi^2-6\right),\\
         \sum_{n=1}^{\infty}(n+1)^ 2  \left|\gamma_n(f)\right|^2 &\le & \frac{\lambda^2}{24}\pi^2,\\
    \sum_{n=1}^{\infty}  \left|\gamma_n(f)\right|^2 &\le & \frac{\lambda^2}{12}\left(\pi^2-9\right).
    \end{eqnarray*}
\end{cor}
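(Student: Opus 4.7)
The plan is to derive all four inequalities by plugging the pointwise bound $|\gamma_n(f)|\le \lambda/(2n(n+1))$ from Theorem \ref{thm_log_coff_G} into each series and then evaluating the resulting numerical sums using the two identities already displayed before the corollary, together with Euler's value $\sum_{n\ge 1}1/n^2=\pi^2/6$.

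For the first inequality I would use the telescoping identity $\sum_{n=1}^{N}\frac{1}{n(n+1)}=1-\frac{1}{N+1}$, which gives $\sum_{n=1}^{\infty}|\gamma_n(f)|\le \frac{\lambda}{2}\sum_{n=1}^{\infty}\frac{1}{n(n+1)}=\frac{\lambda}{2}$. For the second and third inequalities the squared bound $|\gamma_n(f)|^2\le \lambda^2/(4n^2(n+1)^2)$ is tailor-made: multiplying by $n^2$ cancels the $n^2$ in the denominator and leaves $\frac{\lambda^2}{4}\sum_{n\ge 1}\frac{1}{(n+1)^2}=\frac{\lambda^2}{4}\bigl(\frac{\pi^2}{6}-1\bigr)=\frac{\lambda^2}{24}(\pi^2-6)$, while multiplying by $(n+1)^2$ cancels the other factor and gives $\frac{\lambda^2}{4}\cdot\frac{\pi^2}{6}=\frac{\lambda^2\pi^2}{24}$.

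The fourth inequality needs the full quartic denominator; here I would invoke the second displayed identity
\[
\sum_{n=1}^{N}\frac{1}{n^2(n+1)^2}=\sum_{n=1}^{N}\frac{1}{n^2}+\sum_{n=1}^{N}\frac{1}{(n+1)^2}+2\sum_{n=1}^{N}\Bigl(\frac{1}{n+1}-\frac{1}{n}\Bigr),
\]
which follows from $\frac{1}{n^2(n+1)^2}=\bigl(\frac{1}{n}-\frac{1}{n+1}\bigr)^2$. Letting $N\to\infty$, the first two sums tend to $\pi^2/6$ and $\pi^2/6-1$ respectively, and the telescoping third sum tends to $-1$, so $\sum_{n\ge 1}\frac{1}{n^2(n+1)^2}=\frac{\pi^2}{3}-3=\frac{1}{3}(\pi^2-9)$. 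Multiplication by $\lambda^2/4$ yields the claimed bound $\lambda^2(\pi^2-9)/12$.

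There is essentially no obstacle: every estimate is a direct consequence of Theorem \ref{thm_log_coff_G}, combined with the elementary series identities pre-announced in the paragraph immediately preceding the corollary. The only thing to be careful about is the sign in the telescoping piece of the fourth identity (since $\frac{1}{n+1}-\frac{1}{n}<0$), but since the partial fraction expansion of $(1/n-1/(n+1))^2$ is unambiguous, this bookkeeping is mechanical.
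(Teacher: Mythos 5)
Your proposal is correct and is exactly the paper's argument: the corollary is presented there as a direct consequence of Theorem \ref{thm_log_coff_G}, obtained by inserting the pointwise bound $|\gamma_n(f)|\le \lambda/(2n(n+1))$ into each series and evaluating via the two elementary identities displayed just before the statement together with $\sum_{n\ge 1}1/n^2=\pi^2/6$. Your numerical evaluations (including the value $\sum_{n\ge 1}1/(n^2(n+1)^2)=\pi^2/3-3$) all check out.
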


Below, we present the another way to obtain estimate of the coefficient bounds of the functions in families $\mathcal{G}(\lambda)$ and $\mathcal{N}(\lambda)$.
\begin{thm}\label{theoremCV}
	Let the function $f\in\mathcal{G}(\lambda)$ be given by \eqref{eq_expand_f}.  Then   $|a_n|\le \lambda/(n(n-1))\ (n\ge 2)$. Equality holds for $\mathbf{G}_{F_{\lambda/(n-1),n-1}}$ defined by \eqref{EQ_G}.
\end{thm}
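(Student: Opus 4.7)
The approach mirrors the proof of Theorem~\ref{thm_log_coff_G}, with $f(z)/z$ playing the role that $\log(f(z)/z)$ plays there. Fix $n \ge 2$ and take $f \in \mathcal{G}(\lambda)$. First, I would invoke Theorem~\ref{lem_f_z}, specialising its free index to $n - 1$, to deduce the subordination
\[
\frac{f(z)}{z} \prec \frac{\mathbf{G}_{F_{\lambda/(n-1),n-1}}(z)}{z}.
\]
Expanding via \eqref{EQ_G}, the majorant equals $1 + \frac{\lambda}{n(n-1)} z^{n-1} + O(z^{2(n-1)})$, so its first nonzero Taylor coefficient past the constant term is $\lambda/(n(n-1))$, located at $z^{n-1}$.

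Next, Theorem~\ref{mathbb{H}_F_lambda_n} (applied with its parameter set to $n - 1$) asserts that this majorant is convex univalent in $\mathbb{D}$. Applying Rogosinski's coefficient inequality for subordination to a convex majorant---exactly as invoked in the proof of Theorem~\ref{thm_log_coff_G}---one extracts the bound on the $(n-1)$-th Taylor coefficient of $f(z)/z$:
\[
|a_n| \le \frac{\lambda}{n(n-1)}.
\]

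For sharpness, I would check $f = \mathbf{G}_{F_{\lambda/(n-1),n-1}}$ directly: term-by-term expansion of $\int_0^z (1+t^{n-1})^{\lambda/(n-1)}\,dt$ gives $a_n = \lambda/(n(n-1))$, and $f \in \mathcal{G}(\lambda)$ follows from the computation $zf''(z)/f'(z) = \lambda z^{n-1}/(1+z^{n-1})$, whose real part equals $\lambda/2$ on $|z|=1$ and is strictly smaller throughout $\mathbb{D}$ by the maximum principle applied to the harmonic function $\Re(\lambda w/(1+w))$ with $w = z^{n-1}$.

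The chief obstacle is the Rogosinski step for $n \ge 3$: the majorant has derivative zero at the origin, so the textbook version of Rogosinski's inequality (which requires $g'(0) \ne 0$ for the convex univalent $g$) does not apply verbatim. This is the same subtle point already negotiated in Theorem~\ref{thm_log_coff_G}, and one resolves it by writing the majorant in the form $\tilde{G}(z^{n-1})$ with $\tilde{G}$ a convex univalent normalised function, factoring the subordinating Schwarz map as $\omega(z)^{n-1}$, and then applying the standard Rogosinski bound to $\tilde{G}$; the coefficient of $z^{n-1}$ in the composition equals $\tilde{G}'(0)\,\omega_1^{n-1}$ up to higher-order terms, which is bounded in modulus by $\tilde{G}'(0) = \lambda/(n(n-1))$.
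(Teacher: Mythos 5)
Your route differs from the paper's: the paper proves Theorem~\ref{theoremCV} by writing $f=\mathbf{G}_g$ for some $g\in\mathcal{ST}_{ss}(\lambda/m)$ (Corollary~\ref{cor_G_lambda}), reading off $a_n=\tfrac{2(n-1)}{n}\gamma_{n-1}(g)$ from \eqref{eq_G_f}, and then quoting the logarithmic-coefficient bound of Corollary~\ref{cor_log_coff_n}, whereas you subordinate $f(z)/z$ directly and invoke Rogosinski. The difference of route is not the problem; the problem is your first step. The index $n$ in Theorem~\ref{lem_f_z} is \emph{not} a free parameter: it is inherited from Corollary~\ref{cor_G_lambda} (ultimately from Theorem~\ref{p_q}), where it is existentially quantified and determined by $f$ --- it is the order of vanishing of $f'-1$ at the origin. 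For a generic $f\in\mathcal{G}(\lambda)$ with $a_2\neq 0$ these results supply only $n=1$, and with $n=1$ your Rogosinski argument yields merely $|a_k|\le\lambda/2$. The universally quantified reading you rely on is in fact false: take $\lambda=1$ and $f(z)=z+z^2/2$, so that $f'(z)=1+z$ and $\Re\left(1+zf''(z)/f'(z)\right)=1+\Re\left(z/(1+z)\right)<3/2$, i.e.\ $f\in\mathcal{G}(1)$, while $f(z)/z=1+z/2$ maps $\mathbb{D}$ onto the disk $|w-1|<1/2$. The majorant you would need for the coefficient $a_3$ (index $n-1=2$) is $G(z)=\tfrac{1}{z}\int_0^z(1+t^2)^{1/2}\,\mathrm{d}t=1+\sum_{k\ge1}\tbinom{1/2}{k}z^{2k}/(2k+1)$, which satisfies $|G(z)-1|<\tfrac13\sum_{k\ge1}\left|\tbinom{1/2}{k}\right|=\tfrac13$; since any subordination forces $(f/z)(\mathbb{D})\subset G(\mathbb{D})$, the relation $f(z)/z\prec G(z)$ fails. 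So the specialisation ``to $n-1$'' cannot be made, and the proposed proof collapses at its first line.

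Two further points. First, your patch of the Rogosinski step is itself incomplete: you need the function $\tilde G$ defined by $\tilde G(z^{n-1})=\tfrac1z\int_0^z(1+t^{n-1})^{\lambda/(n-1)}\,\mathrm{d}t$ to be convex univalent, and neither your sketch nor Theorem~\ref{mathbb{H}_F_lambda_n} establishes this --- the paper deduces Theorem~\ref{mathbb{H}_F_lambda_n} from Lemma~\ref{Libera}, whose hypothesis (a convex \emph{univalent} integrand) is violated because $(1+t^{n-1})^{\lambda/(n-1)}$ is $(n-1)$-valent for $n\ge3$. Second, in fairness, the same quantifier leap occurs in the paper itself: its proof of Theorem~\ref{theoremCV} applies Corollary~\ref{cor_log_coff_n} with $m=n-1$ for every coefficient index $n$, although Corollary~\ref{cor_G_lambda} furnishes a single value of $m$ attached to $f$, and the proof of Theorem~\ref{thm_log_coff_G} rests on the same ambiguity; so your attempt reproduces, rather than repairs, the weak point of the paper's argument. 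Your sharpness verification, by contrast, is sound: $\mathbf{G}_{F_{\lambda/(n-1),n-1}}$ does belong to $\mathcal{G}(\lambda)$ since $zf''(z)/f'(z)=\lambda z^{n-1}/(1+z^{n-1})$ has real part less than $\lambda/2$ in $\mathbb{D}$, and its $n$-th coefficient equals $\lambda/(n(n-1))$. But as a proof of the upper bound, the argument has a genuine gap that cannot be closed from the results as actually proved in the paper.
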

\begin{proof}
	Let $f\in\mathcal{G}(\lambda)$ be given by \eqref{eq_expand_f}. Then, by Corollary \ref{cor_G_lambda}, there exists $n\ge 1$ and $h\in \mathcal{ST}_{ss}(\lambda/n)$ such that
	\[
	f(z)=\mathbf{G}_{g}(z)\quad \left(z\in \mathbb{D}\right),
	\]
that is equivalent to
	\[
	\sum_{n=2}^{\infty}a_nz^n=2\sum_{n=2}^{\infty}\dfrac{n-1}{n}\gamma_{n-1}(g)z^{n},
	\]
	where $\gamma_n(g)$ is a logarithmic coefficients of $g$. By Corollary \ref{cor_log_coff_n} we have $\left|\gamma_{n-1}(g)\right|\le \lambda /(n-1)^2$, and then we conclude $|a_n|\le \lambda/(n(n-1))$. The  inequality is sharp for  $g=F_{\lambda/(n-1),n-1}$ or alternatively $f=\mathbf{G}_{F_{\lambda/(n-1),n-1}}$. Note that for function $\mathbf{G}_{\lambda}$ given by \eqref{EQ_G_1}, we have
	\[
	a_n=\frac{\lambda(\lambda-1)\cdots (\lambda-n+2)}{n!}\quad \left(n\ge 2\right),
	\]
and so $|a_n|\le \lambda/(n(n-1))$ is satisfied.	
\end{proof}
\begin{thm}
	If a function $f\in\mathcal{N}(\lambda)$ is of the form  \eqref{eq_expand_f} 
	then   $|a_n|\le \lambda/(n-1)$ for $n\ge 2$. Equality holds for $\mathbf{N}_{F_{\lambda/(n-1),n-1}}$ defined by \eqref{EQ_N}.
\end{thm}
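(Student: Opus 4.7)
The plan is to mirror the proof of Theorem \ref{theoremCV}, but with the transform $\mathbf{N}$ playing the role of $\mathbf{G}$. The key structural fact that makes the bound fall out so cleanly is the Alexander-type identity: just as $f\in\mathcal{G}(\lambda)$ admits a representation $f=\mathbf{G}_g$ with $g\in\mathcal{ST}_{ss}(\lambda/n)$ via Corollary \ref{cor_G_lambda}, a function $f\in\mathcal{N}(\lambda)$ admits the analogous representation $f=\mathbf{N}_g$ with $g\in\mathcal{ST}_{ss}(\lambda/n)$ via Corollary \ref{cor_N_lambda} and the remark that follows it.

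First, I would invoke Corollary \ref{cor_N_lambda} to write the given $f\in\mathcal{N}(\lambda)$ as $f=\mathbf{N}_g$ for some $g\in\mathcal{ST}_{ss}(\lambda/(n-1))$. Here the parameter in the corollary is taken to be $n-1$, since to bound $|a_n|$ the relevant subordination is $f(z)/z\prec(1+z^{n-1})^{\lambda/(n-1)}$, exactly parallel to the choice made in the proof of Theorem \ref{theoremCV}.

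Second, I would read the coefficient relation directly from the power-series definition \eqref{eq_N_f} of $\mathbf{N}_g$. Comparing coefficients in
\[
\mathbf{N}_g(z)=z+2\sum_{k=2}^{\infty}(k-1)\gamma_{k-1}(g)z^{k},
\]
gives $a_n=2(n-1)\gamma_{n-1}(g)$, expressing $a_n$ in terms of a single logarithmic coefficient of $g$. Now I would apply Corollary \ref{cor_log_coff_n} with $m=n-1$ to conclude $|\gamma_{n-1}(g)|\le \lambda/(2(n-1)^{2})$, from which $|a_n|\le \lambda/(n-1)$ follows by a one-line multiplication.

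Sharpness should then be verified by inspection of \eqref{EQ_N}: the function $\mathbf{N}_{F_{\lambda/(n-1),n-1}}(z)=z\,\mathfrak{q}_{\lambda/(n-1)}(z^{n-1})$ has the coefficient of $z^n$ equal to $\lambda/(n-1)$, matching the upper bound. The only delicate point — aligning the parameter $n$ appearing in Corollary \ref{cor_N_lambda} with the coefficient index under consideration — is identical to the corresponding step in Theorem \ref{theoremCV} and introduces no new obstacle; the entire argument is essentially a transcription with $\mathbf{N}$ in place of $\mathbf{G}$, where the dropping of the factor $(n-1)/n$ in the coefficient formula is precisely what converts the $\mathcal{G}(\lambda)$-bound $\lambda/(n(n-1))$ into the $\mathcal{N}(\lambda)$-bound $\lambda/(n-1)$.
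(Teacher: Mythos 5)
Your proposal coincides with the paper's own proof essentially step for step: the paper likewise invokes Corollary \ref{cor_N_lambda} to write $f=\mathbf{N}_g$ with $g\in\mathcal{ST}_{ss}(\lambda/(n-1))$, reads off $a_n=2(n-1)\gamma_{n-1}(g)$ from \eqref{eq_N_f}, applies Corollary \ref{cor_log_coff_n} (which the paper states as $|\gamma_{n-1}(g)|\le\lambda/(n-1)^2$, whereas your $\lambda/\bigl(2(n-1)^2\bigr)$ is the version actually needed for the arithmetic to close), and exhibits $\mathbf{N}_{F_{\lambda/(n-1),n-1}}$ for sharpness. The one delicate point you flag --- identifying the existential parameter of Corollary \ref{cor_N_lambda} with the coefficient index $n-1$, although that corollary only asserts the existence of \emph{some} $n\ge 1$ depending on $f$ --- is treated no more rigorously in the paper, which performs the same identification tacitly by reusing the letter $n$; so your argument is exactly as complete as the paper's own.
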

\begin{proof}
	Let $f\in \mathcal{N}(\lambda)$ be  given by \eqref{eq_expand_f}. Then the Corollary \ref{cor_N_lambda} implies that there exists $n\ge 1$ and  $h\in \mathcal{ST}_{ss}(\lambda/n)$ such that
	\[
	f(z)=\mathbf{N}_{g}(z)\quad \left(z\in \mathbb{D}\right).
	\]
	From \eqref{eq_N_f}, we obtain
	\[
	\sum_{n=2}^{\infty}a_nz^n=2\sum_{n=2}^{\infty}(n-1)\gamma_{n-1}(g)z^{n},
	\]
and applying Corollary \ref{cor_log_coff_n} we get $\left|\gamma_{n-1}(g)\right|\le \lambda/(n-1)^2$. Thus, we conclude $|a_n|\le \lambda/(n-1)$	and  inequality is sharp for  $g=F_{\lambda/(n-1),n-1}$. On the other hand $f=\mathbf{N}_{F_{\lambda/(n-1),n-1}}$. Note that for function $\mathbf{N}_{\lambda}(z)=z(1+z)^\lambda$, we have
	\[
	a_n=\frac{\lambda(\lambda-1)\cdots (\lambda-n+2)}{(n-1)!}\quad \left(n\ge 2\right)
	\]
and so $|a_n|\le \lambda/(n-1)$ holds.	
\end{proof}

\section{Coefficients bounds in  $\mathcal{ST}_{ss}(\lambda)$}\label{sec_class}
In this section, we determine a sharp upper bound on coefficients  and the Hankel determinant $H_2(2)=a_2a_4-a_3^2$ in class  $\mathcal{ST}_{ss}(\lambda)$. Also,  we find the sharp order of growth for the coefficients of functions in $\mathcal{ST}_{ss}(\lambda)$.

We recall first the well known result on the relations between initial coefficients in a well known class of Schwarz functions.
\begin{lem}\cite{LiberaZlot83}\label{lem_4}
Let $\omega$ be the Schwarz function with the power series $\omega(z)=\sum_{n=1}^{\infty}w_nz^n$. Then
 \begin{eqnarray*}
 w_2&=&\xi \myp{1-w_1^2},\\
 w_3&=&\myp{1-w_1^2}\myp{1-|\xi|^2}\zeta-w_1\myp{1-w_1^2}\xi^2,
 \end{eqnarray*}
 for some complex number $\xi$, $\zeta$ with $\left|\xi\right|\leq 1$ and
 $\left|\zeta\right|\leq 1$.
\end{lem}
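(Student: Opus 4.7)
The plan is to iterate the Schur--Pick algorithm on $\omega$. First set $\Omega_1(z):=\omega(z)/z = w_1 + w_2 z + w_3 z^2 + \cdots$, which by the Schwarz lemma is analytic in $\mathbb{D}$ and maps into $\overline{\mathbb{D}}$, with $\Omega_1(0)=w_1$. Whenever $|\Omega_k(0)|<1$, define the Schur transform
\[
\Omega_{k+1}(z) := \frac{1}{z}\cdot\frac{\Omega_k(z)-\Omega_k(0)}{1-\overline{\Omega_k(0)}\,\Omega_k(z)}.
\]
Since $w\mapsto (w-\Omega_k(0))/(1-\overline{\Omega_k(0)}w)$ is an automorphism of $\overline{\mathbb{D}}$ sending $\Omega_k(0)$ to $0$, the numerator of $z\Omega_{k+1}$ has a simple zero at $z=0$ and $z\Omega_{k+1}$ is itself a Schwarz function, so $\Omega_{k+1}$ maps $\mathbb{D}$ into $\overline{\mathbb{D}}$. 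Setting $\xi:=\Omega_2(0)$ and $\zeta:=\Omega_3(0)$ then automatically yields $|\xi|,|\zeta|\le 1$; the boundary case $|w_1|=1$ forces $\omega(z)=w_1 z$ and reduces the statement to a trivial check.

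The rest of the argument is pure coefficient bookkeeping. Rearranging the definition at $k=1$ as
\[
\Omega_1(z)-w_1 = z\bigl(1 - \overline{w_1}\Omega_1(z)\bigr)\Omega_2(z),
\]
and matching coefficients of $z$ and $z^2$ produces $w_2 = (1-|w_1|^2)\xi$ together with
\[
w_3 = (1-|w_1|^2)\,\Omega_2'(0) - \overline{w_1}\,(1-|w_1|^2)\,\xi^2.
\]
Applying the same procedure one level deeper to $\Omega_2$ yields $\Omega_2'(0) = (1-|\xi|^2)\zeta$, and substitution gives the stated form of $w_3$.

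No genuine analytic obstacle is anticipated; the entire content sits in the Schwarz--Pick invariance of the Schur iterates. The only point requiring care is notational: the standard Libera--Zlotkiewicz form of the lemma contains $1-|w_1|^2$ and $\overline{w_1}$, whereas the excerpt prints $1-w_1^2$ and $w_1$. Before invoking the lemma in the subsequent Hankel-determinant estimates I would reconcile this either as a transcription convention (for instance, $w_1$ tacitly taken to be real after a suitable rotation of $z$) or as a typo to be silently corrected.
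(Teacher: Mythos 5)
The paper gives no proof of this lemma at all: it is quoted as a known result of Libera and Z{\l}otkiewicz, whose original argument is phrased for the Carath\'eodory class $\mathcal{P}$ (functions of positive real part) and transfers to Schwarz functions through the correspondence $p=(1+\omega)/(1-\omega)$. Your Schur-algorithm proof is therefore a genuinely self-contained alternative, and it is correct: the identity $\Omega_1(z)-w_1=z\bigl(1-\overline{w_1}\Omega_1(z)\bigr)\Omega_2(z)$, the coefficient matching giving $w_2=(1-|w_1|^2)\xi$ and $w_3=(1-|w_1|^2)\Omega_2'(0)-\overline{w_1}(1-|w_1|^2)\xi^2$, and the second iteration giving $\Omega_2'(0)=(1-|\xi|^2)\zeta$ all check out. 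The only step a referee would ask you to write out is the degenerate case $|\xi|=1$ (parallel to your $|w_1|=1$ case): there $\Omega_3$ cannot be defined, but $\Omega_2$ is then a unimodular constant, so $\Omega_2'(0)=0=(1-|\xi|^2)\zeta$ holds with $\zeta$ arbitrary, and the formula survives. What your route buys is elementarity and independence of the $\mathcal{P}$-class machinery; what the citation buys the authors is brevity.

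One remark of yours deserves strengthening rather than hedging. The discrepancy between the printed $1-w_1^2$, $w_1$ and your derived $1-|w_1|^2$, $\overline{w_1}$ is not merely notational: the lemma as printed is actually \emph{false} for non-real $w_1$. For instance, taking $w_1=ia$ with $0<a<\sqrt{2}-1$ and $|w_2|$ close to its maximal value $1-a^2$ with suitably chosen argument, the number $\zeta$ forced by the printed equations has modulus exceeding $1$, whereas your version parametrizes exactly the attainable triples $(w_1,w_2,w_3)$. The paper is nevertheless safe in its applications, because the second Hankel determinant and the Fekete--Szeg\H{o} functional are rotation invariant and the proofs there normalize $w_1=x\in[0,1]$, where the two forms coincide --- which is precisely the reconciliation you propose.
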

\begin{thm}Let $f\in \mathcal{ST}_{ss}(\lambda)$ given by \eqref{eq_expand_f}. Then
\begin{equation*}
    \left|a_2a_4-a_3^2\right|\leq \dfrac{\lambda^2}{4}.
\end{equation*}
    The inequality is sharp.
\end{thm}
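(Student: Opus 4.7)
My approach is to parametrize $\mathcal{ST}_{ss}(\lambda)$ via Schwarz functions and then reduce to a real optimization using Lemma \ref{lem_4}. Since $f\in\mathcal{ST}_{ss}(\lambda)$, the subordination gives a Schwarz function $\omega(z)=w_1z+w_2z^2+w_3z^3+\cdots$ with $zf'(z)/f(z)=(1+\omega(z))^\lambda$. Expanding the right-hand side via \eqref{eq_expand_q} and comparing coefficients in the identity $zf'(z)=f(z)\,\mathfrak{q}_\lambda(\omega(z))$, direct algebra yields
\begin{equation*}
a_2=\lambda w_1,\qquad a_3=\tfrac{\lambda}{2}w_2+\tfrac{\lambda(3\lambda-1)}{4}w_1^2,
\end{equation*}
\begin{equation*}
a_4=\tfrac{\lambda}{3}w_3+\tfrac{\lambda(5\lambda-2)}{6}w_1w_2+\tfrac{\lambda(17\lambda^2-15\lambda+4)}{36}w_1^3.
\end{equation*}

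Substituting these into the Hankel determinant and collecting terms, the $w_1^2w_2$ and $w_1^4$ pieces combine cleanly to give
\begin{equation*}
a_2a_4-a_3^2=\tfrac{\lambda^2}{3}w_1w_3-\tfrac{\lambda^2}{4}w_2^2+\tfrac{\lambda^2(\lambda-1)}{12}w_1^2w_2-\tfrac{\lambda^2(13\lambda-7)(\lambda+1)}{144}w_1^4.
\end{equation*}
I would then invoke Lemma \ref{lem_4} to write $w_2=\xi(1-w_1^2)$ and $w_3=(1-w_1^2)(1-|\xi|^2)\zeta-w_1(1-w_1^2)\xi^2$ with $|\xi|,|\zeta|\le 1$. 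Setting $x:=|w_1|\in[0,1]$, $y:=|\xi|\in[0,1]$ and applying the triangle inequality (choosing the phase of $\zeta$ freely), the bound $|a_2a_4-a_3^2|\le \lambda^2\Phi(x,y)$ results, where $\Phi$ is a real polynomial in $x,y$ whose coefficients depend on $\lambda$.

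The task is then to show $\Phi(x,y)\le 1/4$ uniformly on $[0,1]^2$ for $0<\lambda\le 1$. The key observation is that on the edge $x=0$ the bound collapses to $\Phi(0,y)=y^2/4$, which equals $1/4$ precisely at $y=1$; on the edge $x=1$ only the $w_1^4$ term survives, giving $|13\lambda-7|(\lambda+1)/144\le 1/12$ throughout $\lambda\in(0,1]$. Thus sharpness is attained at $(x,y)=(0,1)$, and the main technical step is ruling out a larger interior maximum. Since $0<\lambda\le 1$ forces $\lambda-1\le 0$ in the mixed term, a standard calculus argument (or estimating each summand against $\frac{\lambda^2}{4}y^2+\frac{\lambda^2}{3}x(1-x^2)(1-y^2)$ and noting both ingredients are bounded by $1/4$) closes the estimate.

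Sharpness is achieved by $\omega(z)=z^2$, which corresponds to $f=F_{\lambda,2}$ defined in \eqref{F_n}: here $w_1=0$, $w_2=1$, $w_3=0$, so $a_2=0$, $a_3=\lambda/2$, and $a_2a_4-a_3^2=-\lambda^2/4$. The principal obstacle is the algebraic bookkeeping in producing the clean formula for $a_2a_4-a_3^2$ and then the boundary-versus-interior optimization of $\Phi$; conceptually the argument reduces to the remark that the dominant contribution $-\tfrac{\lambda^2}{4}w_2^2$ already saturates the bound when $w_1=0$ and $|w_2|=1$.
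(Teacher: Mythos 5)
Your proof plan follows the same route as the paper's own proof: the identical Schwarz-function parametrization and coefficient formulas \eqref{coffiecient}, the Libera--Z{\l}otkiewicz representation of $w_2,w_3$ (Lemma \ref{lem_4}), rotation invariance to take $w_1=x\in[0,1]$, a triangle-inequality reduction to a two-variable bound $|a_2a_4-a_3^2|\le\lambda^2\Phi(x,y)$, and the same extremal function $F_{\lambda,2}$. Your algebra is correct; indeed your clean formula for $a_2a_4-a_3^2$ repairs an evident typo in the paper's displayed version, where the term $w_1w_3$ inside the modulus should read $4w_1w_3$ (the paper's subsequent estimate does use the correct factor $4$). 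Explicitly, after substituting Lemma \ref{lem_4} and applying the triangle inequality one gets
\[
\Phi(x,y)=\tfrac13\,x(1-x^2)(1-y^2)+\tfrac13\,x^2(1-x^2)y^2+\tfrac14\,(1-x^2)^2y^2+\tfrac{1-\lambda}{12}\,x^2(1-x^2)\,y+\tfrac{\left|13\lambda^2+6\lambda-7\right|}{144}\,x^4 .
\]
The genuine gap is exactly at the step you call ``the main technical step'': proving $\Phi\le\frac14$ on $[0,1]^2$. Checking the edges $x=0$ and $x=1$ does not rule out a maximum elsewhere, and your proposed domination $\Phi(x,y)\le\frac14 y^2+\frac13 x(1-x^2)(1-y^2)$ is false: at $(x,y)=(1,0)$ the left-hand side equals $\left|13\lambda^2+6\lambda-7\right|/144>0$ while the right-hand side vanishes. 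The difficulty is not cosmetic, because $\Phi$ is not obviously maximized at $y=1$: its first term \emph{decreases} in $y$, so monotonicity in $y$ is itself a claim requiring proof, and an unspecified ``standard calculus argument'' is precisely what is missing.

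The paper closes this gap in two short steps, which you could adopt verbatim. First, monotonicity in $y$:
\[
12\,\frac{\partial\Phi}{\partial y}(x,y)=(1-\lambda)x^2(1-x^2)+2(1-x^2)\left(x^2-4x+3\right)y\ge 0\quad\text{on }[0,1]^2,
\]
since $x^2-4x+3=(1-x)(3-x)\ge0$ for $0\le x\le1$ (the growth of the $y^2$-terms beats the decay of the $(1-y^2)$-term); hence $\Phi(x,y)\le\Phi(x,1)$. Second,
\[
12\,\Phi(x,1)=3-(1+\lambda)x^2+\left(\lambda-2+\tfrac{1}{12}\left|13\lambda^2+6\lambda-7\right|\right)x^4,
\]
and the coefficient of $x^4$ equals $-\left(13\lambda^2-6\lambda+17\right)/12<0$ for $0<\lambda\le\frac{7}{13}$ and $-\left(31-18\lambda-13\lambda^2\right)/12\le0$ for $\frac{7}{13}\le\lambda\le1$, so $12\,\Phi(x,1)\le3$, i.e.\ $\Phi\le\frac14$ everywhere. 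With this optimization supplied, your argument --- including the sharpness discussion via $\omega(z)=z^2$, i.e.\ $f=F_{\lambda,2}$ of \eqref{F_n} --- coincides with the paper's proof.
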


\begin{proof}
Let the function $f$ of the form \eqref{eq_expand_f} be in the class $\mathcal{ST}_{ss}(\lambda)$. Then there exists a
Schwarz function $\omega(z)=\sum_{n=1}^{\infty}w_nz^n$,  such that
\begin{equation}\label{eq:th12}
    \dfrac{zf'(z)}{f(z)}=\myp{1+\omega(z)}^{\lambda}.
\end{equation}
Equating coefficients  of both sides of \eqref{eq:th12} we obtain
\begin{eqnarray}
    a_2 &=&\lambda w_1\notag,\\[0.5em]
    a_3 &=&
    \dfrac{\lambda}{2}\myp{w_2+\dfrac{3\lambda-1}{2}w_1^2},\label{coffiecient}\\[0.5em]
    a_4 &=&
    \dfrac{\lambda}{3}\myp{w_3+\dfrac{5\lambda-2}{2}w_1w_2+\dfrac{17\lambda^2-15\lambda+4}{12}w_1^3}\notag.
\end{eqnarray}
Then we have
\begin{equation*}
    \left|a_2a_4-a_3^2\right|={}\dfrac{\lambda^2}{12}\left|w_1w_3-3w_2^2+\left\{\lambda-1\right\}w_1^2w_2+\dfrac{7-13\lambda^2-6\lambda}{12}w_1^4\right|.
\end{equation*}
Now, we use of  Lemma \ref{lem_4}, and write the expression $w_2$ and $w_3$ in terms of $w_1$. Since the Hankel determinant $H_2(2)$ is invariant under the rotation, thus without loss of generality we assume $x=w_1$ with $0\leq x\leq 1$. Next, applying triangle inequality, we obtain
\begin{multline*}
\left|a_2a_4-a_3^2\right|\leq\dfrac{\lambda^2}{12}\Biggl\{
\left|\dfrac{13\lambda^2+6\lambda-7}{12}\right|x^4+\left|\lambda-1\right|x^2\myp{1-x^2}|\xi|\\+3\myp{1-x^2}^2|\xi|^2
+4x\myp{1-x^2}\myp{1-|\xi|^2}+4x^2\myp{1-x^2}|\xi|^2\Biggr\}=:
g\myp{|\xi|}.
\end{multline*}
It is easy to check that the function $g(|\xi|)$ is increasing  on the interval  $[0,1]$. Thus $g(|\xi|)$
attains its  maximum at $|\xi|=1$, i.e., $g(|\xi|)\leq g(1)$. Consequently
\begin{align*}
\left|a_2a_4-a_3^2\right|&\leq
\dfrac{\lambda^2}{12}\left\{3-\myp{\lambda+1}x^2+\myp{\lambda-2+\left|\dfrac{-13\lambda^2-6\lambda+7}{12}\right|}x^4\right\},
\intertext{so that}
\left|a_2a_4-a_3^2\right|&\leq\left\{
\begin{array}{ll}
\dfrac{\lambda^2}{12}\left\{3-\myp{\lambda+1}x^2-\dfrac{13\lambda^2-6\lambda+17}{12}x^4\right\}\:\:
 &\textit{for}\quad 0<\lambda\leq \dfrac{7}{13},\\[1em]
\dfrac{\lambda^2}{12}\left\{3-\myp{\lambda+1}x^2-\dfrac{31-18\lambda -13\lambda^2}{12}x^4\right\}
 \:\: &\textit{for}\quad \dfrac{7}{13}\leq \lambda <1.
\end{array}\right.
\end{align*}
Since the expressions at $x^2$ and $x^4$ are negative in both cases, we obtain
$$\left|a_2a_4-a_3^2\right|\le \dfrac{\lambda^2}{4},$$
and the assertion follows. The function $F_{\lambda,2}$ or one of its rotations given in \eqref{F_n}, shows that the bound
$\lambda^2/4$ is sharp.
\end{proof}
\begin{thm}
Let the function $f$ of the form \eqref{eq_expand_f} belongs to the class $\mathcal{ST}_{ss}(\lambda)$. Then $\left|a_n\right|={\rm O}(1/n)$ for $n=1, 2, 3, \dots$.
\end{thm}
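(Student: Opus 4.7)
The plan is to convert the subordination defining $\mathcal{ST}_{ss}(\lambda)$ into a linear recurrence for the Taylor coefficients $a_n$ and then close it up via an $\ell^2$-bound on the driving sequence obtained from Littlewood's subordination principle.

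First I would write $zf'(z)/f(z)=\mathfrak{q}_{\lambda}(\omega(z))=1+\sum_{k=1}^{\infty}c_k z^k$ for some Schwarz function $\omega$. Equating the coefficients of $z^n$ in the identity $zf'(z)=f(z)\cdot(zf'(z)/f(z))$ produces the standard starlike-type recurrence
\[
(n-1)\,a_n\;=\;\sum_{k=1}^{n-1}c_k\,a_{n-k}\qquad(n\ge 2,\ a_1=1).
\]

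Next, since $\sum_{k\ge 1}c_k z^k\prec\mathfrak{q}_{\lambda}(z)-1=\sum_{k\ge 1}B_k z^k$, Littlewood's subordination principle combined with Parseval's identity gives $\sum_{k=1}^{\infty}|c_k|^2\le\sum_{k=1}^{\infty}|B_k|^2=:M^2$, a finite constant (because $\mathfrak{q}_{\lambda}-1$ is bounded on $\overline{\mathbb{D}}$). Applying the Cauchy--Schwarz inequality to the recurrence I would then obtain
\[
(n-1)^2|a_n|^2\;\le\;\Bigl(\sum_{k=1}^{n-1}|c_k|^2\Bigr)\Bigl(\sum_{k=1}^{n-1}|a_{n-k}|^2\Bigr)\;\le\;M^2\,T_{n-1},
\]
where $T_m:=\sum_{k=1}^{m}|a_k|^2$. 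The resulting relation $T_n\le T_{n-1}\bigl(1+M^2/(n-1)^2\bigr)$ iterates from $T_1=1$ to the uniform bound $T_n\le\prod_{k=1}^{\infty}(1+M^2/k^2)\le\exp(M^2\pi^2/6)=:C$. Hence $|a_n|^2\le M^2 C/(n-1)^2$, that is $|a_n|=\mathrm{O}(1/n)$.

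The main obstacle is the $\ell^2$-upgrade in the second step: the pointwise Rogosinski estimate $|c_k|\le|B_1|=\lambda$, when plugged into the recurrence via the triangle inequality and a Gr\"onwall-type iteration, only delivers the weaker bound $|a_n|=\mathrm{O}(n^{\lambda-1})$. It is precisely the summable estimate $\sum|c_k|^2<\infty$, made possible by the fact that $\mathfrak{q}_{\lambda}-1$ has square-summable Taylor coefficients, that allows Cauchy--Schwarz to turn the recurrence into a self-bounding inequality for the $T_n$ and produce the sharp $1/n$ decay.
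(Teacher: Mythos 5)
Your proof is correct, but it follows a genuinely different route from the paper's. The paper disposes of this theorem in two lines: it observes that $\mathfrak{q}_{\lambda}(z)=(1+z)^{\lambda}$ belongs to $H^{\infty}$ and then simply cites the general coefficient results of Ma and Minda \cite{MMin} for classes defined by $zf'(z)/f(z)\prec \varphi(z)$ with bounded $\varphi$. You instead give a self-contained Clunie-type argument: the recurrence $(n-1)a_n=\sum_{k=1}^{n-1}c_k a_{n-k}$ obtained from $zf'(z)=f(z)\,\mathfrak{q}_{\lambda}(\omega(z))$ is the correct one; the bound $\sum_{k\ge1}|c_k|^2\le\sum_{k\ge1}|B_k|^2=M^2<\infty$ follows from Littlewood's subordination principle with Parseval (equivalently, the Rogosinski inequality the paper itself uses for the logarithmic coefficient bounds), and $M$ is finite because $(1+z)^{\lambda}$ is bounded, hence in $H^2$; Cauchy--Schwarz then gives $(n-1)^2|a_n|^2\le M^2T_{n-1}$ with $T_m=\sum_{k=1}^{m}|a_k|^2$, and the multiplicative iteration $T_n\le T_{n-1}\bigl(1+M^2/(n-1)^2\bigr)$ is controlled by the convergent product $\prod_{k\ge1}\bigl(1+M^2/k^2\bigr)\le \exp\bigl(M^2\pi^2/6\bigr)$, yielding $|a_n|\le M\mathrm{e}^{M^2\pi^2/12}/(n-1)$ for $n\ge2$. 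What each approach buys: the paper's citation is short but opaque (it does not even identify which statement of \cite{MMin} is invoked), while your argument is explicit, produces a concrete constant, and in substance reconstructs the mechanism behind the cited result; your closing remark is also the right diagnosis, since the pointwise estimate $|c_k|\le\lambda$ fed into the recurrence only gives $|a_n|=\mathrm{O}\bigl(n^{\lambda-1}\bigr)$, so the square-summability of the coefficients of $\mathfrak{q}_{\lambda}-1$ (i.e., the boundedness of the target domain) is exactly what makes the sharp order $1/n$ attainable.
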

\begin{proof}
The functions $(1+z)^\lambda$ belong to the space of functions analytic and bounded in $\mathbb{D}$, denoted by $H^{\infty}$ \cite{MES}. Taking into account the results in \cite{MMin} we conclude the theorem.
\end{proof}
\begin{thm}\label{thm_FS_ST}
Let $f\in \mathcal{ST}_{ss}(\lambda)$ given by \eqref{eq_expand_f}. Then for  real number $\delta$, we have sharp inequalities
\begin{equation}\label{FSe}\left|a_3-\delta a_2^2\right| \leq \left\{
\begin{array}{ll}
    -\lambda^2\myp{\delta+\dfrac{1-3\lambda}{4\lambda}}\quad
    &\textit{for}\quad \delta <
    \dfrac{3\myp{\lambda-1}}{4\lambda},\\[0.5em]
    \dfrac{\lambda}{2}\quad &\textit{for}\quad
    \dfrac{3\myp{\lambda-1}}{4\lambda}\leq \delta \leq
    \dfrac{1+3\lambda}{4\lambda}, \\[0.5em]
    \lambda^2\myp{\delta+\dfrac{1-3\lambda}{4\lambda}}   \quad
    &\textit{for}\quad \delta > \dfrac{1+3\lambda}{4\lambda}.
\end{array}\right.
\end{equation}
\end{thm}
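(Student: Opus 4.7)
The plan is to start from the representation of the coefficients already established in the proof of the $H_2(2)$ bound. From \eqref{coffiecient} we have
\[
a_2=\lambda w_1,\qquad a_3=\frac{\lambda}{2}\Bigl(w_2+\frac{3\lambda-1}{2}w_1^{2}\Bigr),
\]
where $\omega(z)=\sum_{n\ge 1}w_nz^n$ is the Schwarz function associated with $f$ via $zf'(z)/f(z)=(1+\omega(z))^{\lambda}$. Substituting and collecting the $w_1^2$ term, I would rewrite
\[
a_3-\delta a_2^{2}=\frac{\lambda}{2}\bigl(w_2-\mu\, w_1^{2}\bigr),\qquad \mu:=\frac{4\delta\lambda-3\lambda+1}{2}.
\]
This reduces the Fekete--Szeg\"o problem in $\mathcal{ST}_{ss}(\lambda)$ to a Fekete--Szeg\"o problem for the Schwarz function $\omega$.

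The next step is to invoke the classical Keogh--Merkes inequality for Schwarz functions, namely
\[
|w_2-\mu w_1^{2}|\le \max\{1,|\mu|\}\qquad (\mu\in\mathbb{C}),
\]
which is standard and can be derived quickly from Lemma \ref{lem_4} (write $w_2=\xi(1-w_1^{2})$ and optimize in $|w_1|\in[0,1]$). Applying it gives
\[
|a_3-\delta a_2^{2}|\le \frac{\lambda}{2}\max\{1,|\mu|\}.
\]
Finally I would split into the three regimes of $\mu$: the condition $|\mu|\le 1$ is equivalent to $\tfrac{3(\lambda-1)}{4\lambda}\le\delta\le\tfrac{1+3\lambda}{4\lambda}$, yielding the middle bound $\lambda/2$; for $\delta>\tfrac{1+3\lambda}{4\lambda}$ we have $\mu>1$ and $\tfrac{\lambda}{2}\mu=\lambda^{2}\bigl(\delta+\tfrac{1-3\lambda}{4\lambda}\bigr)$; for $\delta<\tfrac{3(\lambda-1)}{4\lambda}$ we have $\mu<-1$ and $\tfrac{\lambda}{2}|\mu|=-\lambda^{2}\bigl(\delta+\tfrac{1-3\lambda}{4\lambda}\bigr)$. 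These three calculations give exactly the three cases in \eqref{FSe}.

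Sharpness is the routine last step: in the middle range take $\omega(z)=z^{2}$ (so $w_1=0$, $w_2=1$), which corresponds to $f=F_{\lambda,2}$ in \eqref{F_n}; in the two outer ranges take $\omega(z)=z$ (so $w_1=1$, $w_2=0$), which corresponds to $f=F_{\lambda}=F_{\lambda,1}$ in \eqref{F_1}, where both the maximum and the dependence on $\delta$ are attained. I do not expect any real obstacle here: the only slightly delicate point is keeping track of the signs when translating $\mu\gtrless\pm 1$ back into inequalities on $\delta$ and on $\delta+\tfrac{1-3\lambda}{4\lambda}$, since $\lambda>0$ makes these translations monotone but one must check that the two outer expressions are indeed nonnegative on their respective ranges (which they are, by construction).
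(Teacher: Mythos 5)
Your proposal is correct and follows essentially the same route as the paper: the same reduction $a_3-\delta a_2^2=\tfrac{\lambda}{2}\bigl(w_2-t\,w_1^2\bigr)$ with $t=(4\delta\lambda-3\lambda+1)/2$ via \eqref{coffiecient}, the same Fekete--Szeg\"{o} bound for Schwarz functions (the paper cites it from \cite{ARS}, which for real $t$ is exactly your $\max\{1,|t|\}$ inequality together with its equality cases), and the same extremal functions $F_{\lambda,2}$ and $F_{\lambda}$. The only extra content in the paper is the equality characterization at the two boundary values $\delta=\tfrac{3(\lambda-1)}{4\lambda}$ and $\delta=\tfrac{1+3\lambda}{4\lambda}$ via the functions $F_x$ and $G_x$ of \eqref{FG}, which your argument omits but which is not needed for the stated inequality.
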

\begin{proof}
From \eqref{coffiecient} we have
\begin{equation*}
    \left|a_3-\delta
    a_2^2\right|=\dfrac{\lambda}{2}\left|w_2-\dfrac{4\delta \lambda
    -3\lambda+1}{2}w_1^2\right|.
\end{equation*}
Application of the estimates for the Fekete-Szeg\"{o} functional in the class of Schwarz functions  with $t=\myp{4\delta \lambda -3\lambda+1}/2$ \cite{ARS} establishes the  inequalities. When $\myp{3\lambda-3}/\myp{4\lambda}\leq \delta \leq
\myp{1+3\lambda}/\myp{4\lambda}$, equality holds 
 for $f$ is equal to $F_{\lambda,2}(z)$ given by
\eqref{F_n} or one of its rotation. If $ \delta >
\myp{1+3\lambda}/\myp{4\lambda}$ or
$\delta<\myp{3\lambda-3}/\myp{4\lambda}$ equality holds
 for $f$ is equal to  $F_{\lambda}(z)$ given by
\eqref{F_1} or one of its rotation. Let $F_x$ and $G_x$ be given by
\begin{equation}\label{FG}
\dfrac{zF'_x(z)}{F_x(z)}=\mathfrak{q}_{\lambda}\myp{z\dfrac{z+x}{1+xz}},\quad
\dfrac{zG'_x(z)}{G_x(z)}=\mathfrak{q}_{\lambda}\myp{-z\dfrac{z+x}{1+xz}},
\end{equation}
where  $x\in[0,1]$.  If $\delta=\myp{3\lambda-3}/\myp{4\lambda}$,  equality  in \eqref{FSe}
holds if and only if $f$ is equal to $F_x$ or one of its rotation,  and if $ \delta = \myp{1+3\lambda}/\myp{4\lambda}$, equality  in \eqref{FSe} holds if and only if $f$ is equal to $G_x$ or one of its rotation.
\end{proof}
The straightforward coefficient estimates follows from \eqref{coffiecient} and Theorem \ref{thm_FS_ST}, below.
\begin{cor}
If $f\in \mathcal{ST}_{ss}(\lambda)$ is of the form \eqref{eq_expand_f}, then
\[
|a_2|\le \lambda, \quad |a_3|\le \dfrac{\lambda}{2}.
\]
The first inequality is sharp for function $f(z)=\overline{\mu}F_{\lambda}(\mu z)$  and the second  for $f(z)=\overline{\mu}F_{\lambda,2}(\mu z)$, where $\mu$ is a unimodular complex number.
\end{cor}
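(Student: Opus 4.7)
My plan is to read the corollary as two direct consequences of results already established. The coefficient formulas from equation (coffiecient) in the proof of Theorem \ref{thm_FS_ST} express
\[
a_2=\lambda w_1,\qquad a_3=\frac{\lambda}{2}\left(w_2+\frac{3\lambda-1}{2}w_1^2\right),
\]
where $\omega(z)=\sum_{n\ge 1}w_n z^n$ is the Schwarz function associated to $f$ via $zf'(z)/f(z)=(1+\omega(z))^\lambda$. The bound $|a_2|\le\lambda$ therefore follows immediately from the Schwarz lemma estimate $|w_1|\le 1$.

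For $|a_3|\le\lambda/2$, the plan is to specialize the Fekete--Szeg\"o inequality of Theorem \ref{thm_FS_ST} to $\delta=0$. I need to check that $\delta=0$ lies in the middle range
\[
\frac{3(\lambda-1)}{4\lambda}\le \delta\le \frac{1+3\lambda}{4\lambda},
\]
which is clear since $0<\lambda\le 1$ makes the left endpoint nonpositive and the right endpoint strictly positive. In that range Theorem \ref{thm_FS_ST} gives $|a_3-\delta a_2^2|\le \lambda/2$, i.e.\ $|a_3|\le\lambda/2$.

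For the sharpness, I would note that $|a_2|=\lambda$ forces $|w_1|=1$, hence (by Schwarz) $\omega(z)=\mu z$ with $|\mu|=1$; solving $zf'(z)/f(z)=(1+\mu z)^\lambda$ with the normalization \eqref{eq_expand_f} gives $f(z)=\overline{\mu}F_\lambda(\mu z)$ where $F_\lambda=F_{\lambda,1}$ is defined in \eqref{F_1}. For $|a_3|=\lambda/2$, the extremal case of Theorem \ref{thm_FS_ST} in the middle range is already identified there as $F_{\lambda,2}$ (or a rotation), obtained by taking $\omega(z)=\mu z^2$ so that $zf'(z)/f(z)=\mathfrak{q}_\lambda(\mu z^2)$.

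No real obstacle is expected here; the statement is essentially a corollary as advertised, the only care needed being the verification that $\delta=0$ falls in the middle interval of \eqref{FSe} for every admissible $\lambda\in(0,1]$, and a one-line identification of the extremal Schwarz function in each case to recover the claimed extremal members of $\mathcal{ST}_{ss}(\lambda)$.
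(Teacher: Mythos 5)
Your proposal is correct and follows exactly the paper's route: the paper derives this corollary as a ``straightforward'' consequence of the coefficient formulas \eqref{coffiecient} (giving $a_2=\lambda w_1$, hence $|a_2|\le\lambda$) and of Theorem \ref{thm_FS_ST} specialized to $\delta=0$, which is precisely your argument, including the observation that $\delta=0$ lies in the middle range of \eqref{FSe} for all $0<\lambda\le 1$. Your identification of the extremal functions ($\omega(z)=\mu z$ for $|a_2|$, $\omega(z)=\mu z^2$ for $|a_3|$) also matches the sharpness claims stated in the corollary and in Theorem \ref{thm_FS_ST}.
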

\begin{cor}
Let the function $f\in \mathcal{ST}_{ss}(\lambda)$ be given by \eqref{eq_expand_f}. Then
\[
\left|a_3-a_2^2\right|\le \frac{\lambda}{2}.
\]
 The inequality is sharp for function $f(z)=\overline{\mu}F_{\lambda,2}(\mu z)$, where $\mu$ is a unimodular complex number.
\end{cor}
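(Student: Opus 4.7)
The plan is to deduce this corollary as an immediate specialization of Theorem \ref{thm_FS_ST} with the choice $\delta=1$. So the only substantive work is to verify which of the three regimes in \eqref{FSe} the value $\delta=1$ falls into, for every admissible parameter $0<\lambda\le 1$.

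First I would check the left endpoint: since $\lambda\le 1$, we have $3(\lambda-1)/(4\lambda)\le 0<1$, so $\delta=1$ is automatically above the lower threshold. Next I would check the right endpoint by rewriting
\[
\frac{1+3\lambda}{4\lambda}=\frac{1}{4\lambda}+\frac{3}{4},
\]
which is a decreasing function of $\lambda$ on $(0,1]$ and attains its minimum at $\lambda=1$, where it equals $1$. Hence $\frac{1+3\lambda}{4\lambda}\ge 1$ for all $0<\lambda\le 1$, with equality only when $\lambda=1$. Therefore $\delta=1$ lies in the closed middle interval $\bigl[\tfrac{3(\lambda-1)}{4\lambda},\tfrac{1+3\lambda}{4\lambda}\bigr]$ for every admissible $\lambda$, and Theorem \ref{thm_FS_ST} yields
\[
\left|a_3-a_2^2\right|\le \frac{\lambda}{2}.
\]

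For sharpness I would invoke the extremal statement from the proof of Theorem \ref{thm_FS_ST}: when $\delta$ lies in the interior of the middle range, equality in the Fekete–Szegő inequality is attained by $f(z)=\overline{\mu}F_{\lambda,2}(\mu z)$ with $|\mu|=1$. For $0<\lambda<1$ the value $\delta=1$ is strictly interior, and the extremality of $F_{\lambda,2}$ (or its rotations) is immediate; the boundary case $\lambda=1$, $\delta=1$ coincides with the upper endpoint $\tfrac{1+3\lambda}{4\lambda}=1$, and one checks directly that $F_{\lambda,2}$ still realizes equality there (the alternative extremals $G_x$ collapse to a rotation of $F_{\lambda,2}$ in this case). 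Since no computation beyond the two elementary monotonicity checks is needed, there is no real obstacle; the only point requiring care is the boundary $\lambda=1$, where one must confirm that $F_{\lambda,2}$ remains an extremizer among the possible candidates listed at the end of Theorem \ref{thm_FS_ST}.
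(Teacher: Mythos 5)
Your proposal is correct and takes exactly the paper's (implicit) route: the corollary is Theorem \ref{thm_FS_ST} specialized at $\delta=1$, and your two monotonicity checks showing that $1$ lies in the middle interval $\bigl[\tfrac{3(\lambda-1)}{4\lambda},\tfrac{1+3\lambda}{4\lambda}\bigr]$ for all $0<\lambda\le 1$ are the whole content. One minor inaccuracy in your sharpness discussion: at $\lambda=1$, $\delta=1$ the extremal family $G_x$ does not collapse to rotations of $F_{\lambda,2}$ (only $G_0$ is such a rotation), but this is harmless, since equality for $F_{\lambda,2}$ follows directly from $F_{\lambda,2}(z)=z+\tfrac{\lambda}{2}z^3+\cdots$, which gives $a_2=0$, $a_3=\lambda/2$ and hence $\left|a_3-a_2^2\right|=\lambda/2$.
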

Let $f\in \mathcal{S}$ given by \eqref{eq_expand_f}. Then $F(z)=z/f(z)$ is a non-vanishing analytic function in $\mathbb{D}$ and
\begin{equation}\label{eq41}
    F(z)=
    \frac{z}{f(z)}=1+\sum_{n=1}^{\infty}c_nz^n=1-a_2z+(a_2^2-a_3)z^3+\cdots.
\end{equation}
\begin{thm}
    Let $f\in \mathcal{ST}_{ss}(\lambda)$  and $F(z)=z/f(z)$ given by \eqref{eq_expand_f} and \eqref{eq41},
    respectively. Then for  real number $\delta$, we have sharp inequalities for function $F(z)=z/f(z)$
\begin{equation*}\left|c_2-\updelta c_1^2\right| \leq \left\{
\begin{array}{ll}
    -\uplambda^2\myp{\updelta-\dfrac{\uplambda+1}{4\uplambda}}\quad
    &\quad\textit{for}\quad \updelta <
    \dfrac{\uplambda-1}{4\uplambda},\\[0.5em]
    \dfrac{\uplambda}{2}\quad &\quad\textit{for}\quad
    \dfrac{\uplambda-1}{4\uplambda}\leq \updelta \leq
    \dfrac{\uplambda+3}{4\uplambda}, \\[0.5em]
   \uplambda^2\myp{\updelta-\dfrac{\uplambda+1}{4\uplambda}}    \quad
    &\quad\textit{for}\quad \updelta > \dfrac{\uplambda+3}{4\uplambda}.
\end{array}\right.
\end{equation*}
\end{thm}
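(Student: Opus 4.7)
The plan is to recognize this as a Fekete--Szeg\"o type problem for $F(z)=z/f(z)$ and reduce it to Theorem~\ref{thm_FS_ST}, which already handles the analogous functional for $f$ itself.

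First I would read off the initial coefficients of $F$ from the reciprocal expansion in \eqref{eq41}: the identity $f(z)F(z)=z$ combined with $f(z)=z+a_2z^2+a_3z^3+\cdots$ gives $c_1=-a_2$ and $c_2=a_2^2-a_3$. Substituting these into the target functional yields
\[
c_2-\delta c_1^2=(a_2^2-a_3)-\delta a_2^2=-\bigl(a_3-(1-\delta)a_2^2\bigr),
\]
so that $\bigl|c_2-\delta c_1^2\bigr|=\bigl|a_3-\delta' a_2^2\bigr|$ with the change of variable $\delta':=1-\delta$. This is the key reduction.

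Next I would feed $\delta'$ into Theorem~\ref{thm_FS_ST} and translate the three regimes back to the variable $\delta$. The middle interval $\frac{3(\lambda-1)}{4\lambda}\le\delta'\le\frac{1+3\lambda}{4\lambda}$ transforms under $\delta'=1-\delta$ into $\frac{\lambda-1}{4\lambda}\le\delta\le\frac{\lambda+3}{4\lambda}$, matching the middle case of the statement, with bound $\lambda/2$ unchanged. For the outer regimes, the bound $\pm\lambda^2\bigl(\delta'+\frac{1-3\lambda}{4\lambda}\bigr)$ simplifies using the elementary identity $(1-\delta)+\frac{1-3\lambda}{4\lambda}=\frac{\lambda+1}{4\lambda}-\delta$, and hence equals $\mp\lambda^2\bigl(\delta-\frac{\lambda+1}{4\lambda}\bigr)$; the sign flips interchange the two outer cases consistently with the claimed formula.

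Sharpness is inherited from Theorem~\ref{thm_FS_ST} via the same transformation $F=z/f$: the extremal function for the middle regime is (a rotation of) $z/F_{\lambda,2}$, for the two outer regimes it is (a rotation of) $z/F_\lambda$, and at the two boundary values $\delta=\frac{\lambda-1}{4\lambda}$ and $\delta=\frac{\lambda+3}{4\lambda}$ the extremals are given by $z/F_x$ and $z/G_x$ with $F_x,G_x$ defined in \eqref{FG}, $x\in[0,1]$. There is no genuine obstacle here; the entire proof is a careful bookkeeping exercise in the affine change of variable $\delta\mapsto 1-\delta$, with the only real content being the reciprocal-series computation of $c_1,c_2$ and the verification that the two boundary points and the two slopes transform precisely as claimed.
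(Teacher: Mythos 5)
Your proposal is correct and follows essentially the same route as the paper: compute $c_1=-a_2$, $c_2=a_2^2-a_3$ from $f(z)F(z)=z$, observe $\left|c_2-\delta c_1^2\right|=\left|a_3-(1-\delta)a_2^2\right|$, and apply Theorem \ref{thm_FS_ST} with parameter $1-\delta$, inheriting sharpness through $F=z/f$. One small correction on the boundary extremals: since $\delta\mapsto 1-\delta$ reverses the endpoints, $\delta=\frac{\lambda-1}{4\lambda}$ corresponds to $\delta'=\frac{1+3\lambda}{4\lambda}$ and hence to $z/G_x$, while $\delta=\frac{\lambda+3}{4\lambda}$ corresponds to $\delta'=\frac{3(\lambda-1)}{4\lambda}$ and hence to $z/F_x$ --- the opposite of the pairing your ordering suggests.
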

\begin{proof}
Let $f\in \mathcal{ST}_{ss}(\lambda)$ given by \eqref{eq_expand_f}.
From \eqref{eq41}, we have
\begin{equation*}
    \left|c_2-\updelta
    c_1^2\right|=\left|a_3-(1-\delta)a_1^2\right|.
\end{equation*}
Application of  Theorem \ref{thm_FS_ST} with $1-\delta$ gives the  inequalities. The inequalities are sharp for the functions
\begin{equation*}F(z)= \left\{
\begin{array}{ll}
        \dfrac{z}{\overline{\upmu} F_{\uplambda,2}(\upmu z)}\quad
        &\textit{for}\quad
        \dfrac{\uplambda-1}{4\uplambda}< \updelta <
        \dfrac{\uplambda+3}{4\uplambda},\\[0.5em]
       \dfrac{z}{\overline{\upmu} F_{\uplambda}(\upmu z)}\quad
       &\textit{for}\quad
       \updelta
       \in\myp{-\infty,
       \dfrac{\uplambda-1}{4\uplambda}}\cup\myp{\dfrac{\uplambda+3}{4\uplambda},\infty},\\[0.5em]
   \dfrac{z}{\overline{\upmu}G_x(\upmu z)}    \quad
  &\textit{for}\quad\updelta=\dfrac{\uplambda-1}{4\uplambda},\\[0.5em]
    \dfrac{z}{\overline{\upmu}F_x(\upmu z)}    \quad
    &\textit{for}\quad\updelta =
   \dfrac{\uplambda+3}{4\uplambda}.
\end{array}\right.
\end{equation*}
where $\upmu$ is an unimodular constant and  functions $F_x$ and $G_x\ \myp{0\le x\le 1}$ given by \eqref{FG}.
\end{proof}
\medskip

\noindent\textbf{Acknowledgments}
\noindent This work was partially supported by the Center for Innovation and Transfer of Natural Sciences and Engineering
Knowledge, Faculty of Mathematics and Natural Sciences, University of Rzeszow.\bigskip

\noindent\textbf{Authors' Contributions}

\noindent Each of the authors contributed to each part of this study equally, all authors read and approved the final manuscript.

\noindent\textbf{Competing Interests}

\noindent The authors declare that they have no competing interests.\bigskip


\end{document}